\numberwithin{equation}{section} 
\newcounter{cont}[section] 
\newtheorem{thm}[cont]{Theorem}
\newtheorem{prop}[cont]{Proposition}
\newtheorem{lem}[cont]{Lemma}
\theoremstyle{definition}
\newtheorem{defn}[cont]{Definition}
 \theoremstyle{remark}
 \newtheorem{rem}[cont]{Remark}
\newcommand{\R}{\mathbb{R}}
\newcommand{\e}{\varepsilon}
\newcommand{\N}{\mathbb{N}}
\begin{document}

\title[Dynamics of solutions to $p$-Laplacian reaction diffusion equations]{Long time dynamics of solutions to $p$-Laplacian diffusion problems with bistable reaction terms}

\author[R. Folino]{Raffaele Folino}

\address[R. Folino]{Departamento de Matem\'aticas y Mec\'anica\\Instituto de 
Investigaciones en Matem\'aticas Aplicadas y en Sistemas\\Universidad Nacional Aut\'onoma de 
M\'exico\\Circuito Escolar s/n, Ciudad Universitaria C.P. 04510 Cd. Mx. (Mexico)}

\email{folino@mym.iimas.unam.mx}

\author[R.G. Plaza]{Ram\'on G. Plaza}

\address[R. G. Plaza]{Departamento de Matem\'aticas y Mec\'anica\\Instituto de 
Investigaciones en Matem\'aticas Aplicadas y en Sistemas\\Universidad Nacional Aut\'onoma de 
M\'exico\\Circuito Escolar s/n, Ciudad Universitaria C.P. 04510 Cd. Mx. (Mexico)}

\email{plaza@mym.iimas.unam.mx}

\author[M. Strani]{Marta Strani}

\address[M. Strani]{Dipartimento di Scienze Molecolari e Nanosistemi\\Universit\`a Ca' Foscari Venezia Mestre\\Campus Scientifico\\Via Torino 155, 30170 Venezia Mestre (Italy)}

\email{marta.strani@unive.it}

\keywords{$p$-Laplacian; reaction diffusion equations; transition layer structure; metastability; energy estimates}

\subjclass[2010]{35K91, 35K57, 35B36, 35B40, 35K59}

\maketitle


\begin{abstract} 
This paper establishes the emergence of slowly moving transition layer solutions for the $p$-Laplacian (nonlinear) evolution equation,
\[
u_t = \e^p(|u_x|^{p-2}u_x)_x  - F'(u), \qquad x \in (a,b), \; t > 0,
\] 
where $\e > 0$ and $p{>1}$ are constants,  driven by the action of a family of double-well potentials of the form
\[
F(u)={\frac{1}{2n}|1-u^2|^{n}},
\]
{indexed by $n>1$, $n \in \R$} with minima at two pure phases $u = \pm 1$. 
The equation is endowed with initial conditions and boundary conditions of Neumann type. It is shown that interface layers, or solutions which initially are equal to $\pm 1$ except at a finite number of thin transitions of width $\e$, persist for an exponentially long time in the critical case with $n=p$, and for an algebraically long time in the supercritical (or degenerate) case with $n>p$. For that purpose, energy bounds for a renormalized effective energy potential of Ginzburg-Landau type are established. In contrast, in the subcritical case with $n< p$, the transition layer solutions are stationary. 
\end{abstract}


\section{Introduction}\label{sec:intro}
Reaction-diffusion equations are broadly used to describe common phenomena such as pattern formation
and front propagation in biological, chemical and physical systems.
In their one-space dimensional form,  \emph{reaction-diffusion equations} read as
\begin{equation}\label{readiff-gen}
	 u_t=D  u_{xx}+ f( u), \qquad  x\in I\subset \R, \quad t>0,
\end{equation}
where $ u= u( x,t)\in \R$, the constant $D>0$ is the diffusion coefficient, and the reaction term $ f:\R\rightarrow\R$ is a smooth function.
Equations of the type \eqref{readiff-gen} are basic models describing competition between the diffusion term, namely $Du_{xx}$, and the reaction term, $ f(u)$. 
The combination of a linear diffusion together with a nonlinear interaction term
produces mathematical features that are not predictable by looking at one of the two mechanism alone;       
indeed, the term $Du_{xx}$ acts in such a way as to ``spread uniformly'' the solution $u$,
while the dynamics of $ u_t= f( u)$ can produce different phenomena as, for example, large solutions and step gradients, and this leads to the possibility of critical behaviors.      
This can be clearly observed if one considers the balanced bistable reaction term $f(u)=u(1-u^2)$, 
as in the classical  \emph{Allen--Cahn equation},
\begin{equation}\label{eq:Al-Ca}
	u_t=\e^2 u_{xx}+u(1-u^2), \qquad \quad 0 < \e \ll 1,
\end{equation} 
introduced in 1979 by  S. M. Allen and J. W. Cahn \cite{AlCa79} to describe the interface motion between different crystalline structures in alloys. 
To be more precise, the dynamics of solutions to \eqref{eq:Al-Ca} involves
two different effects: the reaction term $f$ pushes
the solution towards $u=\pm1$ (stable zeros of $f$), while the diffusion term tends to regularize and smoothen the solution. When the diffusion coefficient $\e$ tends to zero, two different phases appear, corresponding
to intervals where the solution is close to either $u=+1$ or $u=-1$, and the width of the transition layers between these two phases is of order $\e$. In this scenario, a peculiar phenomenon occurs: interface layers are transient solutions that appear to be stable, but which, after an \emph{exponentially} long time of order $\mathcal{O}(e^{1/\e})$, drastically change their shape and converge to one of the equilibrium states. This phenomenon is known in the literature as \emph{metastability}, and it was first studied in the context of the classical Allen--Cahn equation in the pioneering works of Carr and Pego \cite{CaPe89,CaPe90}, Fusco and Hale \cite{FuHa89} and Bronsard and Kohn \cite{BrKo90}, which appeared approximately at the same time and which applied different methodologies (see also \cite{ChenX04}). Since then, the metastability of transition layer structures has been studied in (and extended to) many other models such as hyperbolic equations \cite{Fol17,Fol19,FLM19}, parabolic systems \cite{Str14}, gradient flows \cite{OtRe07}, viscous conservation laws \cite{FLMS17,LaOM95, MaSt13, ReWa95} or reaction-diffusion equations with phase-dependent diffusivities \cite{FHLP}, just to mention a few.

\vskip0.2cm

In this paper we analyze the dynamics of the solutions to \eqref{readiff-gen} when the classical linear diffusion is replaced by the nonlinear \emph{$p-$Laplacian operator} $(|u_x|^{p-2}u_x)_x$, while the reaction term $f$ is chosen in such a way to  generalize  the one in \eqref{eq:Al-Ca}; more precisely, our aim is to study the long time dynamics of the solutions to the following reaction diffusion model 
\begin{equation}\label{eq:P-model}
	u_t=\e^p(|u_x|^{p-2}u_x)_x+{u(1-u^2)|1-u^2|^{n-2}},
\end{equation}
where $u=u(x,t) : [a,b]\times(0,+\infty)\rightarrow \R$, $p {>1}$, {$n>1$}, $\e>0$ is a small parameter, and the reaction term $f(u)={u(1-u^2)|1-u^2|^{n-2}}$ is a prototype of bistable function with two stable zeros at $u=\pm1$ and an unstable one at $u=0$. In particular, the reaction term can be interpreted as the force, $f=-F'$, deriving from a potential $F:\R\to\R$ given by
\begin{equation}\label{defF}
	F(u)={\frac{1}{2n}|1-u^2|^{n}, \quad n>1}.
\end{equation}
Being the exponent $n$ in \eqref{defF} {greater than or equal to $1$}, $F$ is a double well potential with wells of equal depth in $u=\pm1$. Thus, equation \eqref{defF} describes a family (indexed by {$n>1$, $n \in \R$}) of double-well potentials which underly force (or reaction) terms of bistable type. Figure \ref{figPotential} below shows the shapes of these potentials for different values of $n ={2,4,6}$, exhibiting the double-well structure. The classical Allen--Cahn equation \eqref{eq:Al-Ca} is a particular case (obtained by choosing $p=2$ and $n={2}$) of model \eqref{eq:P-model}.

\begin{rem}
Instead of the explicit form \eqref{defF}, one may consider a generic potential $F\in C^1(\R)$ satisfying 
\begin{equation}\label{eq:ass-F}
\begin{aligned}
	&F(\pm1)=F'(\pm1)=0, \qquad \qquad F(u)>0 \quad \forall\, u\neq\pm1,  \\
	&{\lambda_1 |1\pm u|^{n} \leq F(u) \leq \lambda_2 |1\pm u|^{n}, \qquad\qquad \mbox{ for } \qquad |u\pm1|<\eta,}
	\end{aligned}
\end{equation}
{for some constants $0<\lambda_1\leq\lambda_2$, $\eta>0$ and $n>1$,} 
and analyze equation \eqref{eq:P-model} with reaction term  $f(u)=-F'(u)$. 
We claim that the theory developed in this paper can be extended to such a case with not much extra bookkeeping needed.
\end{rem}

\begin{figure}[hbtp]
\centering
\includegraphics[scale=.6, clip=true]{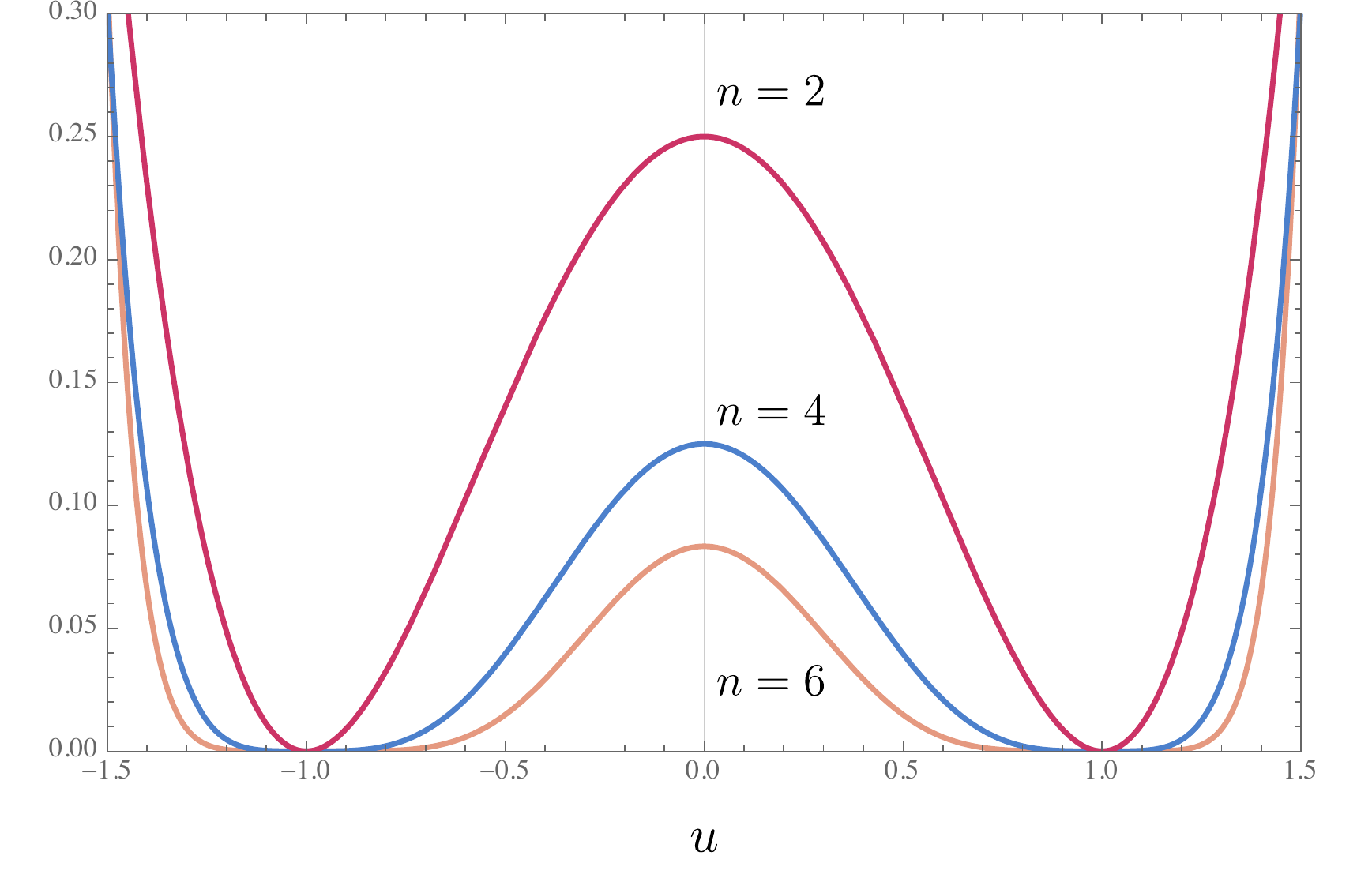}
\caption{\small{Plots of the potential function \eqref{defF} for $n={2,4,6}$, which underly different behaviors when compared to the diffusion parameter $p \geq 2$. 
For example, when $p = 4$ the former cases correspond to subcritical ($n < p$), critical ($n=p$) and supercritical or degenerate ($n>p$) cases, respectively.}}
\label{figPotential}
\end{figure}

According to custom in the study of phase coexistence models, we consider equation \eqref{eq:P-model} complemented with homogeneous Neumann boundary conditions
\begin{equation}\label{eq:Neu}
	u_x(a,t)=u_x(b,t)=0, \qquad \qquad t>0,
\end{equation}
and initial datum
\begin{equation}\label{eq:initial}
	u(x,0)=u_0(x), \qquad \qquad x\in[a,b].
\end{equation}

Historically, the $p-$Laplacian operator first appeared from a power law alternative to Darcy's phenomenological law to describe fluid flow through porous media (see, for instance, the recent review paper by Benedikt \emph{et al.} \cite{BGKT18} and the references therein). Since then, the $p-$Laplacian has established itself as a fundamental quasilinear elliptic operator and has been intensely studied in the last fifty years. A comprehensive list of only basic analytical results for the $p-$Laplacian would have to contain hundreds of references and that is not our purpose here. The reader is referred to the classical book by Lions \cite{LionsJL69} for a modern functional analytic treatise for the $p-$Laplacian and related quasilinear (elliptic or evolution) equations, as well as to the recent monograph by Lindqvist \cite{Lind19} for the stationary equation. Relatively less attention has been paid to the evolution (parabolic) $p-$Laplacian equation, even though the literature is also very extensive. An abridged list of references include \cite{DiB93,LPV06,LionsJL69,Lou03,TY00}. Up to our knowledge, the $p-$Laplacian, interpreted as a diffusive mechanism in reaction-diffusion models, has not been studied in the context of long time behavior (metastability) of phase transition layer solutions. What is the effect of the $p-$Laplace operator on the behavior of interface layer solutions to a basic model like \eqref{eq:P-model}? What is the interplay between diffusion and the double-well potential under consideration? In this paper, we provide comprehensive and detailed answers to these questions.

Henceforth, our main objective is to investigate the behavior of the solutions to \eqref{eq:P-model}-\eqref{eq:Neu} for large times, 
enlightening, in particular, the interaction between the constant $p$ (appearing in the diffusion term) and the constant $n$ 
(characterizing the behavior of the potential \eqref{defF}). 
More precisely, we study three different situations:
\vskip0.2cm
\begin{itemize}
\item[(a)] If ${1<n}<p$, then there exist stationary solutions to \eqref{eq:P-model}-\eqref{eq:Neu} with a $N$ transition layer structure inside the interval $[a,b]$ 
(see section \ref{sub:SS} below); 
{the main examples of such a case are given by the classical Laplacian operator $p=2$ and $1<n<2$ in \eqref{eq:P-model}},
and the $p$-Laplacian evolution equation \eqref{eq:P-model} with $p>2$ and $n={2}$, 
corresponding to the usual double well potential with two equal minima in $u=\pm 1$. We call this the \emph{subcritical case}.
\vskip0.2cm
\item[(b)] If ${n=p>1}$, then (as in the linear case $p=2$) equation \eqref{eq:P-model} exhibits the phenomenon of metastability (see section \ref{sec:2n=p}). 
More precisely, the solutions arising from an initial datum $u_0$ with a \emph{transition layer structure} (see Definition \ref{def:TLS}) 
will maintain such a structure for an exponentially long time, that is a time of $\mathcal{O}(e^{Ap/2\e})$, for some $A>0$, 
and after that they will converge to one of the minimal points of the potential $F$. 
We call this the \emph{critical case}.
\vskip0.2cm
\item[(c)] If ${n>p>1}$, then the potential $F$ satisfies $F^{([p])}(\pm 1)=0$ (where $[p]$ denotes the integer part of $p$)
and solutions with an $N$-transition layer structure still maintain their shape for a long time, 
but the order of such persistence is only algebraic in $\e$, precisely of the order $\mathcal{O}(\e^{-\gamma_{n,p}})$ for some positive $\gamma_{n,p}$. 
Hence no metastability is observed. 
We call this the \emph{supercritical or degenerate case}.
\end{itemize}
\vskip0.2cm

By convention, we have chosen the term supercritical or degenerate to characterize potentials which are degenerate \emph{with respect to the $p-$Laplacian operator}, satisfying $F^{([p])}(\pm 1)=0$ (notice that this always happens if $1<p<2$, but it means that more derivatives of $F$ vanish at $\pm1$ if $p\geq2$). This {nomenclature} is consistent with the concept of a degenerate double-well potential with respect to the classical Laplace operator (see, e.g., \cite{DFV18}). In general (also in the case $1<p<2$),  it describes an energy regime in which the algebraic power of the potential, ${n}$, exceeds the diffusion parameter $p$. For instance, Figure \ref{figPotential} shows different potential functions \eqref{defF} for values $n = {2,4,6}$. These potentials may or may not be degenerate depending on the diffusion parameter $p$ under consideration. The shape of the potential with $n={2,4,6}$ in Figure \ref{figPotential}, for example, may seem degenerate with respect to the classical diffusion, but it is not so when $p$ is large enough ($p > 6$). In several space dimensions, this definition of degeneracy (or criticality) should depend as well on the dimension $d$ of the physical space (see \cite{DFV18,PeV05} for further information). For a study of variational properties of subcritical (according to our definition) potentials with respect to the $p-$Laplacian, the reader is referred to \cite{CLY09} (see also the recent work \cite{HuSo20}).

The goal of this paper is to prove the behaviors (a), (b) and (c) described above, paying particular attention to the critical and degenerate cases where ${n} \geq p$. 
Indeed, as we have already mentioned, in the subcritical case ${1<n} < p$ we will see that there exist solutions with a $N$-transition layer structure for the problem (hence invariant under the dynamics of \eqref{eq:P-model}), independently of the choice of $N \in \mathbb{N}$ (for the precise statement of such a claim, see Proposition \ref{prop:2n<p} below). As a consequence, if one starts with an initial configuration with such a structure,  the corresponding time-dependent solution will clearly satisfy $u(x,t) \equiv u_0(x)$ for all $t>0$. In other words, transition layers do not evolve in time and persist forever. An interesting related question is whether these structures are dynamically stable under small perturbations, inasmuch as it has been recently proved that they are unstable as variational solutions to the associated elliptic problem (see Theorem 1.5 in \cite{DPV20}). The dynamical (in)stability of these stationary solutions will be addressed in a companion paper.

In contrast, if ${n} \geq p$ then solutions to \eqref{eq:P-model} starting with an $N$-transition layer structure {\it slowly converge} towards one of the minima of the potential $F$ and the time of such convergence drastically changes among the two cases. For instance, in the critical case where ${n}=p$ we prove the {\it exponentially slow motion} of the solutions to \eqref{eq:P-model} exhibiting, in this fashion, the phenomenon of metastability. More precisely, we show that the solutions evolve so slowly that they \emph{appear} to be stable,
and it is only after a very long time that they converge to their asymptotic limit (the constant profile with values $+1$ or $-1$).
It is to be noticed that the same behavior is observed in the classical {Allen--Cahn equation} \eqref{eq:Al-Ca},
that is, when $p=2$ and $n={2}$  in \eqref{eq:P-model},
equipped with homogeneous Neumann boundary conditions \eqref{eq:Neu}. Hence, our analysis recovers the classical results in this case \cite{CaPe89}.
In particular, inspired by the paper of Bronsard and Kohn \cite{BrKo90}, where the authors study metastable properties to \eqref{eq:Al-Ca} 
by means of an \emph{energy approach} (based on the fact that the Allen--Cahn equation can be seen as a gradient flow in $L^2(a,b)$ of the energy associated to the system), 
we apply a similar strategy and define the functional,
\begin{equation}\label{energyintro}
	E[u]=\int_a^b\left[\frac{\e^p|u_x|^p}{p}+F(u)\right]\,dx,
\end{equation}
which is an energy functional of Ginzburg-Landau type associated to the $p$-Laplacian diffusion equation \eqref{eq:P-model}. The strategy of \cite{BrKo90}, albeit very powerful, leads the authors to establish {\it algebraic slow motion} of the solutions to \eqref{eq:Al-Ca}. Grant \cite{Grnt95} showed that the energy  approach is capable of obtaining an {\it exponentially slow motion}, as he demonstrated it in the case of solutions to Cahn--Morral systems. 
For further developments and applications of the energy method we quote, among others, \cite{Fol17,Fol19,FLM19} for hyperbolic systems, and, more recently, the paper
 \cite{FPS1}, for reaction diffusion equations with mean curvature type diffusions, 
 to which we refer the reader in search of a brief summary of the original strategy of Bronsard and Kohn \cite{BrKo90}.

Motivated by all these previous results, in this paper we adapt the energy approach to the IBVP \eqref{eq:P-model}-\eqref{eq:Neu}-\eqref{eq:initial}. 
The main idea of  \cite{BrKo90} is to use a renormalized energy which, in our case, is defined from \eqref{energyintro} as
\begin{equation}\label{eq:energy}
	E_\e[u]= \frac{E[u]}{\e}=\int_a^b\left[\frac{\e^{p-1}|u_x|^p}{p}+\frac{F(u)}{\e}\right]\,dx.
\end{equation}
The key point is to prove that for any function $u$ sufficiently close in $L^1(a,b)$ to a step function $v$, the following inequality holds
\begin{equation*}
	E_\varepsilon[u]\geq Nc_p-C\exp(-Ap/2\varepsilon),
\end{equation*}
for some $A>0$ and $c_p>0$.  
Such a lower bound is crucial to prove the persistence, for an exponentially long time, of metastable patterns for \eqref{eq:P-model}; 
indeed, this variational result, together with the fact that, if $u$ is a solution to \eqref{eq:P-model} with homogeneous Neumann boundary conditions \eqref{eq:Neu}, then 
\begin{equation}\label{eq:energy-var-intro}
	\frac{d}{dt} E[u](t)=-\int_a^b u_t^2(x,t)\,dx,
\end{equation}
allows us to proceed as in \cite{BrKo90,Grnt95} and to prove the \emph{exponentially slow motion} of the solutions to \eqref{eq:P-model}-\eqref{eq:Neu}. 
This result extends the classical ones on \eqref{eq:Al-Ca} to the case of the $p$-Laplacian model \eqref{eq:P-model}-\eqref{defF}  for any  ${n} = p{>1}, \ {n \in \R}$. 
However, while the exponentially slow motion for the classical  Allen--Cahn equation ($p=2$) was already proved, even if with different strategies 
(see, for instance \cite{CaPe89}),  the study in the case of a purely $p$-Laplacian operator is, to the best of our knowledge, new.

\vskip0.2cm
On the other hand, when ${n>p>1}$ we prove that the time taken by the solutions to \eqref{eq:P-model} to reach one of the two constant solutions $u=\pm 1$ (minima of $F$) is only {algebraical in $\e$}; 
again, the key point to achieve such result is a lower bound on the energy that  in this case reads
\begin{equation*}
	E_\e[u]\geq Nc_p-C\varepsilon^{\gamma_{n,p}},
\end{equation*}
where $\gamma_{n,p}>0$ depends both on $n$ and $p$. 
Hence, we still have a slow motion of the solutions towards the equilibria 
(we will see that, for small $\e$, unstable patterns persist for times of the order $\mathcal O(\e^{-\gamma_{n,p}})$), but no metastability occurs. 
Such behavior, which is typical of the degenerate case, has already been observed on the whole line when $p=2$ 
by Bethuel and Smets in \cite{BetSme2013} (in this case, the degeneracy translates into the fact that $F''(\pm 1)=0$).
It is worth noticing that the exponent $\gamma_{n,p}$  we obtain here (for more details see Section \ref{sec:degcase}), {coincides with} the one of \cite{BetSme2013} 
in the case $p=2$. 
In this spirit, our result is, to the best of our knowledge, the first result proving the (algebraical) slow motion of solutions to reaction diffusion problems in bounded intervals in the degenerate case, also in the case of the classical Laplacian operator.

\vskip0.2cm
We close this Introduction by sketching the plan of the paper. In Section \ref{sec2} we study the existence of stationary solutions to \eqref{eq:P-model}, analyzing at first the problem on the whole line. Indeed, steady states on the bounded interval $[a,b]$ and satisfying the boundary conditions \eqref{eq:Neu} can be explicitly constructed from stationary solutions to the equation on the whole real line (see Propositions \ref{prop:2n<p} and \ref{prop:2n>p} below). We will see that, as it happens for the time dependent problem, there are substantial differences among the subcritical, ${n} < p$, and the critical and degenerate cases with ${n} \geq p$, the more interesting being the fact that in the first one stationary solutions can oscillate between the values $u=\pm 1$ by touching them, while in the latter this is not possible.

Once the problem of existence of stationary solutions is understood, we focus our attention on the time {dependent} solutions to \eqref{eq:P-model}. In Section \ref{sec:2n=p} we analyze the dynamics in the case ${n} = p$, showing that some solutions to \eqref{eq:P-model} exhibit the phenomenon of metastability. More precisely, we show that solutions starting with an initial configuration with $N$-transition layers will maintain such a structure for  an exponentially long time of order $\mathcal O(e^{c p /\e})$, $c>0$ (see Theorem \ref{thm:main}), before converging to their asymptotic limit.

Finally, in Section \ref{sec:degcase} we study the degenerate case ${n} > p$. As we have already mentioned, the solutions still maintain an unstable structure with $N$-transition layers for long times, but in this case the convergence to the equilibria happens for times which are algebraical with respect to $\e$, more precisely, for $t = \mathcal O(\e^{-\gamma_{n,p}})$ (see Theorem \ref{thm:main2} below).

\vskip0.2cm
Let us stress once again that the exponentially slow motion obtained in Section \ref{sec:2n=p} for the critical case constitutes an extension to the $p$-Laplacian operator, ${p>1}$, of the results already known for the classical Laplacian;  {in particular, our analysis  proves metastable dynamics also when $1<p<2$ and $n=p$.}  Moreover, the estimate of the time of convergence of the solutions is sharp since it reads exactly as the classical one in \cite{CaPe89} when $p = 2$. On the other hand, the algebraic slow motion in the supercritical (or degenerate) case obtained in Section \ref{sec:degcase} is, to the best of our knowledge, new, even in the case of classical reaction diffusion equations: indeed, the only {known} result in this direction is given by \cite{BetSme2013}, where the same problem is addressed on the whole real line, and it is worth noticing that the algebraical order of convergence obtained (see Remark \ref{sharp}) is sharp {since it reads as the estimate in \cite{BetSme2013} if $p=2$}.

\section{Stationary solutions on the real line and on bounded intervals}\label{sec2}
The aim of  this section is to study the stationary problem associated to the equation \eqref{eq:P-model}, i.e. to describe the solutions to
\begin{equation}\label{SS}
	\e^{p}\left(|\varphi'|^{p-2}\varphi'\right)'-F'(\varphi)=0, \qquad {p>1,} \qquad F(\varphi)={\frac{1}{2n}|1-\varphi^2|^{n}, \quad n>1,}
\end{equation}
both on the whole real line and on a bounded interval.
We will see that there is a substantial difference between the subcritical (${1<n}<p$) and the critical and degenerate (${1<p\leq n}$) cases. 

\subsection{Stationary solutions on the real line}\label{sub:SW}
We start by analyzing problem \eqref{SS} on the whole real line. 
In particular, in this section we focus the attention on two kinds of bounded non constant stationary solutions: 
standing waves (monotone) and periodic solutions (non monotone).

\subsubsection{Standing waves}\label{substanding}
We look for standing waves solutions to \eqref{SS}, i.e. monotone solutions $\Phi_\e=\Phi_\e(x)$ to the boundary value problem
\begin{equation}\label{eq:Fi}
	\e^p\left((\Phi'_\e)^{p-1}\right)'-F'(\Phi_\e)=0, \qquad \lim_{x\to\pm\infty}\Phi_\e(x)=\pm1, \qquad \Phi_\e(0)=0,
\end{equation}
where we erased the modulus since we consider monotone {\it increasing} profiles.
\begin{prop}\label{prop:SW}
Let us consider the solution $\Phi_\e$ to \eqref{eq:Fi} with $F$ as in \eqref{SS}.
\begin{itemize}
\vskip0.2cm
\item[(i)] If ${1<n}<p$ then the profile $\Phi_\e$ reaches the states $\pm 1$ for a finite value of the $x$ variable. 
Precisely, there exists $\bar x\in(0,+\infty)$ such that $\Phi_\e(\pm  \, \bar x)= \pm 1$.
Moreover, $\Phi_\e\in C^\infty(\mathbb{R}\backslash\{\pm\bar x\})$ is a classical solution to \eqref{eq:Fi}.
\vskip0.2cm
\item[(ii)] If ${n}=p$ then the solution to \eqref{eq:Fi} is given by
\begin{equation}\label{eq:tanh}
	\Phi_\e(x)=\tanh\left(\frac{C_p x}{\e}\right), \qquad \qquad C_p:=\left(\frac{1}{2(p-1)}\right)^{1/p},
\end{equation}
and we thus have an {\it exponential decay} of  $\Phi_\e$ towards the states $\pm 1$.
\item[(iii)] If ${n>p>1}$ then there exist $c_1,c_2>0$ (depending on $p$) such that
\begin{equation}\label{eq:alg-decay}
	\begin{aligned}
		&1-\Phi_\e(x)\leq c_1 x^{-c_2} \qquad &\mbox{as } x\to+\infty,\\ 
		&\Phi_\e(x)+1\leq-c_1x^{-c_2} &\mbox{as } x\to-\infty.
	\end{aligned}
\end{equation}
We thus have an {\it algebraic decay} of $\Phi_\e$ towards the states $\pm 1$.
\end{itemize}
\end{prop}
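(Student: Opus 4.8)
The plan is to exploit the first-integral structure of the autonomous ODE in \eqref{eq:Fi}, since for monotone increasing profiles the equation becomes $\e^p\bigl((\Phi_\e')^{p-1}\bigr)' = F'(\Phi_\e)$. Multiplying by $\Phi_\e'$ and integrating, one obtains the conserved quantity $\e^p\,(p-1)\,(\Phi_\e')^p / p = F(\Phi_\e) + \text{const}$; imposing the boundary behavior $\Phi_\e \to \pm 1$ together with $\Phi_\e' \to 0$ forces the constant to vanish (using $F(\pm 1)=0$), so that
\begin{equation}\label{eq:firstintegral}
	\frac{\e^p(p-1)}{p}\,(\Phi_\e')^p = F(\Phi_\e).
\end{equation}
Solving for $\Phi_\e'$ and separating variables yields the quadrature
\begin{equation}\label{eq:quadrature}
	\frac{x}{\e} = \left(\frac{p-1}{p}\right)^{1/p}\int_0^{\Phi_\e(x)} \frac{ds}{F(s)^{1/p}},
\end{equation}
valid wherever $\Phi_\e \in (-1,1)$. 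The whole proof of all three regimes then reduces to analyzing the convergence and the rate of blow-up of the integral $\int_0^{u} F(s)^{-1/p}\,ds$ as $u \to 1^-$, with $F(s)=\tfrac{1}{2n}|1-s^2|^n$ so that $F(s)^{1/p} \sim c\,(1-s)^{n/p}$ near $s=1$.

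For part (i), the subcritical case $n<p$, the exponent $n/p < 1$ makes $\int^1 F(s)^{-1/p}\,ds$ \emph{convergent}; hence the right-hand side of \eqref{eq:quadrature} remains finite as $\Phi_\e \to 1$, which means the profile reaches the value $+1$ at a finite abscissa $\bar x = \e\,(\tfrac{p-1}{p})^{1/p}\int_0^1 F(s)^{-1/p}\,ds$. By symmetry ($F$ is even) the value $-1$ is attained at $-\bar x$. Beyond $\pm\bar x$ one simply extends by the constant $\pm 1$, which solves \eqref{eq:Fi} trivially; smoothness of $\Phi_\e$ on $\R\setminus\{\pm\bar x\}$ follows from the implicit function theorem applied to \eqref{eq:quadrature} away from the endpoints (where $F'\neq 0$ makes the flow smooth), while at $\pm\bar x$ the profile is only $C^1$-matched, explaining the exclusion of those two points. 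For part (ii), the critical case $n=p$, one verifies \eqref{eq:tanh} directly by substitution into \eqref{eq:firstintegral}: with $F(u)=\tfrac{1}{2p}(1-u^2)^p$, the candidate $\tanh(C_p x/\e)$ satisfies $\Phi_\e' = (C_p/\e)(1-\Phi_\e^2)$, so $(\Phi_\e')^p = (C_p/\e)^p (1-\Phi_\e^2)^p$, and matching the constant $C_p^p = \tfrac{1}{2(p-1)}$ against $\tfrac{p}{\e^p(p-1)}F(\Phi_\e) = \tfrac{1}{\e^p(p-1)\,2}(1-\Phi_\e^2)^p / 1$ confirms the identity; the exponential decay is then read off from the $\tanh$ asymptotics.

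For part (iii), the supercritical case $n>p$, now $n/p>1$ so the integral $\int^1 F(s)^{-1/p}\,ds$ \emph{diverges}, meaning $\Phi_\e$ approaches $\pm 1$ only as $x\to\pm\infty$, and the rate is governed by the tail of \eqref{eq:quadrature}. Near $s=1$ we have $F(s)^{-1/p}\sim c\,(1-s)^{-n/p}$, whose integral up to $u$ behaves like $(1-u)^{1-n/p}$ (a positive power of $(1-u)^{-1}$ since $n/p>1$); setting this $\sim x/\e$ and inverting gives $1-\Phi_\e(x) \sim c_1 x^{-c_2}$ with $c_2 = \bigl(\tfrac{n}{p}-1\bigr)^{-1} = p/(n-p)$, establishing the algebraic decay, and the $x\to-\infty$ estimate follows by the odd symmetry of $\Phi_\e$. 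I expect the only genuinely delicate point to be the rigorous inversion in part (iii): one must pass from the asymptotic equivalence of the quadrature integral to a clean one-sided \emph{inequality} $1-\Phi_\e(x)\le c_1 x^{-c_2}$ uniformly for large $x$, which requires controlling the lower-order corrections in the expansion of $\int_0^u F(s)^{-1/p}\,ds$ rather than just its leading term; the finite-abscissa claim in (i) and the explicit verification in (ii) are comparatively routine once \eqref{eq:quadrature} is in hand.
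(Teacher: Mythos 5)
Your proposal is correct and follows essentially the same route as the paper: derive the first integral $\tfrac{\e^p(p-1)}{p}(\Phi_\e')^p=F(\Phi_\e)$, reduce to the quadrature $\int_0^{\Phi_\e(x)}F(s)^{-1/p}\,ds \propto x/\e$, and split into cases according to whether $\int_0^1 F(s)^{-1/p}\,ds$ converges ($n<p$), gives $\tanh$ by separation of variables ($n=p$), or diverges ($n>p$). Your treatment of case (iii) is in fact more explicit than the paper's, which merely notes that $g(\Phi)=(1-\Phi^2)^{n/p}$ has $g'(\pm1)=0$ precisely when $n>p$ and invokes standard ODE theory, whereas you invert the quadrature asymptotics to exhibit the exponent $c_2=p/(n-p)$.
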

\begin{proof}
First we prove that there exists a unique solution to \eqref{eq:Fi}.
Multiplying by $\Phi'_\e$ the first equation of \eqref{eq:Fi} and using the equality 
$$\left((\Phi'_\e)^{p-1}\right)'\Phi'_\e=\left[\frac{(p-1)}{p}(\Phi'_\e)^{p}\right]',$$ we get
\begin{equation*}
	\left[\frac{\e^{p}(p-1)}{p}(\Phi'_\e)^{p}-F(\Phi_\e)\right]'=0, \qquad\qquad \mbox{in }\, (-\infty,+\infty).
\end{equation*}
It follows that the profile $\Phi_\e$ satisfies 
\begin{equation}\label{eq:FI-first}
	\begin{cases}
		\e\Phi'_\e=\displaystyle{\left(\frac{p}{p-1}F(\Phi_\e)\right)^{1/p}},\\
		\Phi_\e(0)=0.
	\end{cases}
\end{equation}
Hence, by using the explicit expression of $F$ given in \eqref{defF}, there exists a unique solution to \eqref{eq:FI-first}  which is strictly increasing, and implicitly defined by
\begin{equation*}
	\int_{0}^{\Phi_\e(x)} \frac{ds}{{|1-s^2|^{n/p}}}=\left(\frac{p}{{2n}(p-1)}\right)^{1/p}\, \frac{x}{\e}.
\end{equation*} 
In order to prove properties (i), (ii) and (iii) of the statement, we thus have to study the convergence of the following improper integral
\begin{equation}\label{int_SW}
	\int_{0}^1 \frac{ds}{{|1-s^2|^{n/p}}},
\end{equation}
and such convergence depends on the value of the quantity ${n}/p$. 
First of all, if ${n}/p<1$ (that is, if ${n}<p$) then the integral \eqref{int_SW} converges, meaning that there exists $\bar x <+\infty$ such that $\Phi_\e(\bar x)=1$.
It is easy to check that $\Phi_\e\in C^\infty(\mathbb{R}\backslash\{\pm\bar x\})$ and, since \eqref{eq:FI-first} implies
$$(\Phi'_\e)^{p-1}=\e^{1-p}{\left(\frac{p}{p-1}F(\Phi_\e)\right)^{1-1/p}}\in C^1(\mathbb{R}),$$
we have that $\Phi_\e$ is a classical solution to \eqref{eq:Fi}.

On the other hand, the integral in \eqref{int_SW} diverges whenever ${n}\geq p$. 
In such a case, let us then consider the ODE in \eqref{eq:FI-first}, which reads
\begin{equation*}
	\e \Phi_\e' = C_{n,p} (1-\Phi_\e^2)^{n/p}, \quad \mbox{where} \quad  C_{n,p}:=  \left(\frac{p}{2n(p-1)}\right)^{1/p}.
\end{equation*}
In the case $n=p$, by separation of variables one obtains the explicit formula \eqref{eq:tanh};
finally, by defining $g(\Phi_\e):=  (1-\Phi_\e^2)^{n/p}$, we can compute
$$g'(\Phi_\e)=-\frac{{2}n}{p}\Phi_\e (1-\Phi_\e^2)^{\frac{n}{p}-1},$$
and observe that $g'(\pm 1) = 0$ if and only if $n>p$. 
The standard theory of ordinary differential equations leads to point (iii) of the statement and the proof is complete.
\end{proof}

We stress that, because of the symmetry of the potential $F$, it is easy to check that $-\Phi_\e(x)=\Phi_\e(-x)$ is a {\it decreasing} standing wave, namely a solution to
\begin{equation}\label{eq:Fi-}
	\e^p\left(|\Psi'_\e|^{p-2}\Psi'_\e\right)'-F'(\Psi_\e)=0, \qquad \lim_{x\to\pm\infty}\Psi_\e(x)=\mp1, \qquad \Psi_\e(0)=0,
\end{equation}
which satisfies the three cases of Theorem \ref{prop:SW} as well.
As a consequence,  since in the case $n<p$ we have $\Psi_\e(\mp\bar x)=\pm1$ for some $\bar x>0$, we can construct infinitely many (non monotone) solutions to
\begin{equation*}
	\e^p\left(|\Phi'_\e|^{p-2}\Phi'_\e\right)'-F'(\Phi_\e)=0, \qquad \lim_{x\to\pm\infty}\Phi_\e(x)=\pm1, \qquad \Phi_\e(0)=0.
\end{equation*}
On the other hand, since the integral \eqref{int_SW} diverges for $n\geq p$, in such a case
there exists a unique solution to the problem \eqref{eq:Fi}.

\begin{rem}\label{rem:standing}
If one considers a generic function $F$ satisfying \eqref{eq:ass-F} instead of the explicit double well potential \eqref{defF}, 
then one finds the same behaviors (i), (ii) and (iii) of Proposition \ref{prop:SW}. Indeed, the integral
\begin{equation*}
	\int_0^1 \frac{ds}{F(s)^{1/p}}
\end{equation*}
converges if and only if $n<p$ (this can be easily checked by using the assumptions \eqref{eq:ass-F}).
In particular, when $n=p$ and $F$ is given by \eqref{defF}, one has
\begin{equation*}
	\begin{aligned}
		&1-\Phi_\e(x)\approx2e^{-\lambda_px/\e}  \qquad &\mbox{as } x\to+\infty,\\ 
		&\Phi_\e(x)+1\approx2e^{\lambda_px/\e} &\mbox{as } x\to-\infty,
	\end{aligned}
\end{equation*}
where $\lambda_p:=2C_p$ and $C_p$ is defined in \eqref{eq:tanh},
while in the case of a generic potential $F$ satisfying {\eqref{eq:ass-F} with $n=p$}, one has the exponential decay 
\begin{equation*}
	\begin{aligned}
		&1-\Phi_\e(x)\leq c_1e^{-c_2x/\e}  \qquad &\mbox{as } x\to+\infty,\\ 
		&\Phi_\e(x)+1\leq c_1e^{c_2x/\e} &\mbox{as } x\to-\infty.
	\end{aligned}
\end{equation*}
On the other hand,  if $n>p$ (i.e., we are in the degenerate case),
 we have existence of a unique solution to \eqref{eq:Fi} satisfying \eqref{eq:alg-decay}.
\end{rem}

\subsubsection{Periodic solutions}
We now focus the attention on the existence of periodic solutions to \eqref{SS} and we prove the following result.
\begin{prop}\label{periodicR}
Fix $\e>0$ and let us consider problem \eqref{SS}. Then there exist periodic solutions $\Phi_{{}_{T_\e}}$ satisfying $|\Phi_{{}_{T_\e}}|<1$ and with fundamental period $T_\e$ given by
\begin{equation}\label{eq:T_eps}
	T_\e(\bar s) :={2\e}\left( \frac{p-1}{p}\right)^{1/p}\int_0^{\bar s}  \frac{ds}{\left( F(s)-F(\bar s)\right)^{1/p}}, \qquad \forall \ \bar s \in (0,1).
\end{equation}
In particular $T_\e : (0,1) \to (0,\bar T)$  if $n<p$, while  $T_\e : (0,1) \to (0,+\infty)$ if $n\geq p$.
\end{prop}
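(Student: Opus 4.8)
The plan is to reduce \eqref{SS} to a first-order ODE by means of the same conservation law used in the proof of Proposition \ref{prop:SW}. Multiplying \eqref{SS} by $\varphi'$ and using $(|\varphi'|^{p-2}\varphi')'\varphi'=\big[\tfrac{p-1}{p}|\varphi'|^p\big]'$, one finds that along any solution the Hamiltonian
\[
H(\varphi,\varphi'):=\frac{\e^p(p-1)}{p}|\varphi'|^p-F(\varphi)
\]
is constant. Since $F$ is even, strictly decreasing on $(0,1)$, with $F(0)=\tfrac{1}{2n}$ and $F(\pm1)=0$, the level set $\{H=-F(\bar s)\}$ for $\bar s\in(0,1)$ is a closed orbit in the $(\varphi,\varphi')$-plane oscillating between the turning points $\varphi=\pm\bar s$, where $\varphi'=0$. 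The first thing I would establish is that each such orbit yields a bounded periodic solution with $|\Phi_{{}_{T_\e}}|\leq\bar s<1$, and that conversely every periodic solution of this type arises in this way.

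Next I would compute the period. On the branch where $\varphi$ increases, the relation $H=-F(\bar s)$ gives
\[
\e\,\varphi'=\Big(\tfrac{p}{p-1}\Big)^{1/p}\big(F(\varphi)-F(\bar s)\big)^{1/p},
\]
so that $\e\,(\tfrac{p-1}{p})^{1/p}\big(F(\varphi)-F(\bar s)\big)^{-1/p}\,d\varphi=dx$. Integrating over one full oscillation and using the evenness of $F$ to fold the contributions onto $(0,\bar s)$ produces exactly the stated formula \eqref{eq:T_eps}. The integrand is singular only at the endpoint $s=\bar s$, where $F(s)-F(\bar s)\sim|F'(\bar s)|(\bar s-s)$; since $1/p<1$ this singularity is integrable for each fixed $\bar s\in(0,1)$, so $T_\e(\bar s)$ is well defined and continuous in $\bar s$ on $(0,1)$.

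It then remains to determine the range of $\bar s\mapsto T_\e(\bar s)$, which reduces to its two endpoint limits. As $\bar s\to1^-$ we have $F(\bar s)\to F(1)=0$, the integrand decreases pointwise to $F(s)^{-1/p}$, and one expects $T_\e(\bar s)\to 2\e(\tfrac{p-1}{p})^{1/p}\int_0^1 F(s)^{-1/p}\,ds$. Up to the factor $(2n)^{1/p}$ this limiting integral is precisely \eqref{int_SW}, whose convergence was characterised in Proposition \ref{prop:SW}: it is finite iff $n<p$ and divergent iff $n\geq p$. This gives the claimed dichotomy, namely $\sup_{\bar s}T_\e(\bar s)=\bar T<\infty$ in the subcritical case and $\sup_{\bar s}T_\e(\bar s)=+\infty$ in the critical/degenerate case. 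The behaviour as $\bar s\to0^+$ is obtained by Taylor expanding $F(s)=\tfrac{1}{2n}-\tfrac12 s^2+O(s^4)$, whence $F(s)-F(\bar s)\sim\tfrac12(\bar s^2-s^2)$ and, after the rescaling $s=\bar s\,\sigma$, $T_\e(\bar s)\sim c\,\bar s^{\,1-2/p}$, which pins down the lower end of the range.

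The main obstacle is the limit $\bar s\to1^-$, because the singularity of the integrand is not fixed: as $\bar s\to1$ one has $F'(\bar s)=-\bar s(1-\bar s^2)^{n-1}\to0$, so the local blow-up at $s=\bar s$ strengthens and one cannot pass to the limit naively under the integral sign. For the divergent direction ($n\geq p$) this is handled cleanly by the comparison $F(s)-F(\bar s)\leq F(s)$, which yields $T_\e(\bar s)\geq 2\e(\tfrac{p-1}{p})^{1/p}\int_0^{\bar s}F(s)^{-1/p}\,ds\to+\infty$ by monotone convergence on the expanding domain. For the convergent direction ($n<p$) the delicate step is to show the boundary layer near $s=\bar s\approx1$ contributes only a bounded amount; I would isolate a neighbourhood of $s=1$, use $F(s)\sim\tfrac{2^{\,n-1}}{n}(1-s)^n$ there, and perform the substitution $1-s=(1-\bar s)w$ to extract the scaling $(1-\bar s)^{\,1-n/p}\int_1^{\,\cdot}(w^n-1)^{-1/p}\,dw$, which remains bounded precisely when $n<p$. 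Combining this local estimate with monotone convergence on the complementary region $s\in(0,1-\delta)$ yields the finite limit $\bar T$ and completes the proof.
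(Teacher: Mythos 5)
Your proposal follows essentially the same route as the paper: the conserved quantity $\tfrac{\e^p(p-1)}{p}|\varphi'|^p-F(\varphi)=C$ with $C=-F(\bar s)\in(-\tfrac{1}{2n},0)$, reduction to quadrature, the period formula \eqref{eq:T_eps}, and the dichotomy governed by the convergence of $\int_0^1 F(s)^{-1/p}\,ds$ (i.e.\ by \eqref{int_SW}), exactly as in Propositions \ref{prop:SW} and \ref{periodicR}. You are in fact more careful than the paper on two points: you correctly observe that the endpoint singularity $(F(s)-F(\bar s))^{-1/p}\sim |F'(\bar s)|^{-1/p}(\bar s-s)^{-1/p}$ is integrable for every $p>1$ (the paper writes ``converges whenever $p>2$'', which should read $p>1$), and you actually justify the limit $\bar s\to1^-$ — monotone convergence for $n\geq p$ and a boundary-layer estimate of order $\mathcal{O}(\delta^{1-n/p})$ for $n<p$ — where the paper merely asserts \eqref{eq:T_eps-prop}.

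There is, however, one step where your own computation undercuts the conclusion you draw from it. Your small-amplitude expansion gives $T_\e(\bar s)\sim c\,\bar s^{\,1-2/p}$ as $\bar s\to0^+$, and this tends to $0$ only when $p>2$. For $p=2$ the limit is the positive linearization period ($F''(0)=-1$, so $T_\e\to 2\pi\e$), and for $1<p<2$ one has $1-2/p<0$ and $T_\e(\bar s)\to+\infty$. So the asymptotic does not ``pin down the lower end of the range'' as $0$; it shows that the claimed range $(0,\bar T)$, resp.\ $(0,+\infty)$, is attained only for $p>2$, and that for $1<p\leq2$ the infimum of $T_\e$ over $(0,1)$ is strictly positive (indeed the map need not even be increasing near $\bar s=0$ when $p<2$). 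You should either restrict this part of the statement to $p>2$ or explicitly flag the discrepancy, rather than presenting the expansion as confirming the proposition as stated. (The paper itself asserts $\lim_{\bar s\to0^+}T_\e(\bar s)=0$ without proof, so this is a gap you have exposed rather than introduced — but having computed the exponent, you cannot then wave it away.)
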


\begin{proof}
By proceeding as in the proof of Proposition \ref{prop:SW}, we want to find a solution $\Phi_{{}_{T_\e}}$ to the following problem
\begin{equation}\label{inutile}
	\frac{\e^{p}(p-1)}{p}|\Phi_{{}_{T_\e}}'|^{p}-F(\Phi_{{}_{T_\e}})=C,
\end{equation}
where $C$ in an integration constant belonging to $(-\frac{1}{2n},0)$ (since we are interested in bounded solutions). 
It is worth noticing that the choice $C=-\frac{1}{2n}$ corresponds to the constant solution $\Phi_{{}_{T_\e}}=0$, 
while the choice $C=0$ provides the heteroclinic solution found in Proposition \ref{prop:SW} (see Figure \ref{fig:1}).

\begin{figure}[hbtp]
\centering
\includegraphics[scale=.75, clip=true]{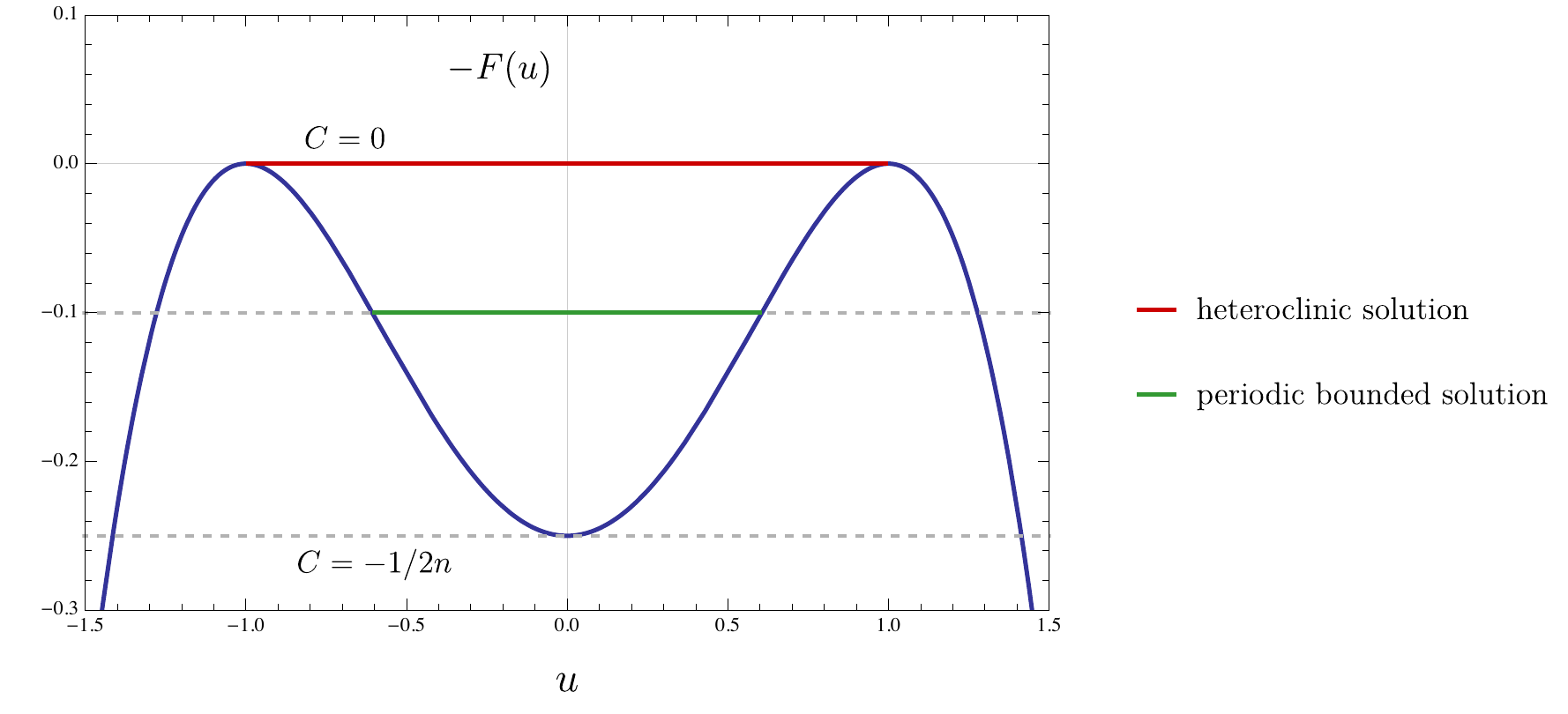}
\caption{\small{The level of the energy (given by the constant $C$) has to be such that $C> -F(u)$. In particular, bounded nontrivial solutions can be found only for $C \in (-1/2n, 0)$. When $C=0$ we have the heteroclinic solution (that touches the value $\pm 1$ only in the case $n<p$). }}
\label{fig:1}
\end{figure}

From \eqref{inutile} it follows that
\begin{equation*}
	\e|\Phi_{{}_{T_\e}}'|=A_p\displaystyle{\left(F(\Phi_{{}_{T_\e}})+C\right)^{1/p}}, \qquad  A_{p}:=  \left(\frac{p}{p-1}\right)^{1/p},
\end{equation*}
so that $\Phi_{{}_{T_\e}}$ is implicitly defined by the relation
\begin{equation*}
	\int_{0}^{\Phi_{{}_{T_\e}}(x)} \frac{ds}{\left(F(s)+C\right)^{1/p}} = A_{p}\frac{x}{\e},
\end{equation*}
where we imposed $\Phi_{{}_{T_\e}}(0)=0$ and, for definiteness, we consider $\Phi_{{}_{T_\e}}'(0)>0$. 
We thus have to verify the convergence of the following improper integral
\begin{equation*}
	\int_0^{\bar s} \frac{ds}{\left(F(s)-F(\bar s)\right)^{1/p}},
\end{equation*}
where $\bar s\in(0,1)$ is such that $F(\bar s)=-C$. 
The above integral converges whenever $p>2$ because $F'(\bar s)<0$ and we can use the Taylor expansion $F(s)-F(\bar s)=F'(\bar s)(s-\bar s)+ o(|s-\bar s|^2)$.
Thus, we have constructed a periodic solution on the whole real line with fundamental period $T_\e=T_\e(\bar s)$ given by \eqref{eq:T_eps}.
We finally notice that $T_\e(\bar s)$ is an increasing function of  $\bar s\in(0,1)$ satisfying 
\begin{equation}\label{eq:T_eps-prop}	
	\lim_{\bar s\to0^+} T_\e(\bar s)=0 \quad \mbox{ and}  \quad 
	\lim_{\bar s\to 1^-} T_\e(\bar s)=\left\{ \begin{aligned}& 
	\frac{2\e}{A_p}\int_0^{1}  \frac{ds}{F(s)^{1/p}} \qquad &\mbox{if} \ n<p,\\ &+\infty \qquad &\mbox{if} \ n\geq p, \end{aligned}\right.\, \\
\end{equation}
for any $\e>0$, and the proof is complete.
\end{proof}

\begin{rem}
We notice that, while the assumption $|\Phi_{{}_{T_\e}}|<1$ is always satisfied in the case $n \geq p$, 
when looking at the case $n<p$ such hypothesis prevents to consider solutions touching the values $\pm1$.
Indeed, in such a case the integral 
\begin{equation*}
	\int_0^1 \frac{ds}{F(s)^{1/p}}
\end{equation*}
converges, and therefore there also exist periodic solutions oscillating  between the values $\pm1$ with $|\Phi_{{}_{T_\e}}|\leq1$.
\end{rem}

\subsection{Stationary solutions in a bounded interval}\label{sub:SS}
In this subsection we consider the stationary problem associated to \eqref{eq:P-model}-\eqref{eq:Neu}, that is
\begin{equation}\label{eq:stationary}
	\e^{p}\left(|\varphi'|^{p-2}\varphi' \right)'-F'(\varphi)=0, \qquad \qquad \varphi'(a)=\varphi'(b)=0.
\end{equation}
Before proving the existence of non constant solutions to \eqref{eq:stationary}, 
we notice that all the zeros of $F'$ solve \eqref{eq:stationary};
in the case of $F$ given by \eqref{defF}, we thus have three constant solutions, $u=\pm1$ and $u=0$.

\vskip0.2cm
Our first result describes the  solutions to \eqref{eq:stationary} in the subcritical case ${1<n<p}$.
\begin{prop}\label{prop:2n<p}
Let us consider problem \eqref{eq:stationary} with ${1<n<p}$. 
Fix $N\in\mathbb{N}$ and $a<h_1<h_2<\dots<h_N<b$. If $\e>0$ is sufficiently small then there exist two solutions to \eqref{eq:stationary} oscillating between $-1$ and $+1$
with exactly $N$ zeros in $h_1,\dots,h_N$.
\end{prop}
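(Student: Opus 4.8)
The plan is to build the two desired solutions by gluing together the monotone standing-wave profiles constructed in Proposition \ref{prop:SW}. The crucial structural fact from the subcritical case $1<n<p$ is that the increasing standing wave $\Phi_\e$ reaches the value $+1$ at a finite $\bar x<+\infty$ (and stays $C^1$ there because $(\Phi_\e')^{p-1}\in C^1(\R)$), and by the symmetry $-\Phi_\e(x)=\Phi_\e(-x)$ the decreasing wave $\Psi_\e$ reaches $-1$ at $\bar x$ as well. The key point is that at the point where the profile touches $\pm1$, the first derivative vanishes: from the first-order relation \eqref{eq:FI-first} one has $\e\Phi_\e'=(\tfrac{p}{p-1}F(\Phi_\e))^{1/p}$, and since $F(\pm1)=0$ this forces $\Phi_\e'(\pm\bar x)=0$. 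This is exactly the compatibility needed to paste a monotone layer together with a constant piece equal to $\pm1$ and still obtain a $C^1$ (indeed classical, away from finitely many points) solution of the ODE with zero flux.

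First I would fix the reference layer width $2\bar x$ and, for each prescribed zero $h_i$, insert a rescaled/translated copy of either the increasing wave $\Phi_\e$ or the decreasing wave $\Psi_\e$ centered at $h_i$, choosing the orientation so that the $N$ layers alternate between up-steps and down-steps. Between consecutive layers, and on the two end intervals adjacent to $a$ and $b$, the candidate is set identically equal to whichever pure phase $\pm1$ is dictated by continuity. Concretely, one candidate starts at $-1$ on $[a,h_1-\bar x)$, climbs to $+1$ through the layer at $h_1$, descends at $h_2$, and so on; the second solution is its reflection (starting from $+1$), which accounts for the \emph{two} solutions in the statement. Each such function is, by construction, a classical solution on the interior of every subinterval, equals a constant (hence trivially solves the equation) on the flat pieces, and matches in both value ($\pm1$) and derivative ($0$) at every junction, so it is a genuine weak/classical steady state of \eqref{eq:stationary}; the Neumann conditions $\varphi'(a)=\varphi'(b)=0$ hold because the end pieces are constant.

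Second I would verify the smallness condition on $\e$. The layers must fit inside $[a,b]$ without overlapping: the half-width $\bar x$ scales like $\e$ (the implicit relation in Proposition \ref{prop:SW} gives $\Phi_\e(\bar x)=1$ at $\bar x=\e(\tfrac{p}{2n(p-1)})^{1/p}\int_0^1|1-s^2|^{-n/p}\,ds$, a convergent integral precisely when $n<p$), so requiring $\e$ small guarantees that each interval $[h_i-\bar x,h_i+\bar x]$ stays inside $[a,b]$, that these intervals are pairwise disjoint, and that the flat connecting pieces are nonempty. This is where the hypothesis "$\e>0$ sufficiently small" enters quantitatively, and it is the only genuinely analytic ingredient beyond Proposition \ref{prop:SW}.

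The main obstacle I anticipate is not the gluing combinatorics but the \textbf{regularity at the junctions}: one must check that attaching a constant phase to a layer that arrives at $\pm1$ with zero derivative yields an admissible solution of the quasilinear equation, i.e. that $|\varphi'|^{p-2}\varphi'$ is continuously differentiable across $h_i\pm\bar x$. This is exactly the point flagged in Proposition \ref{prop:SW}(i) — that $\Phi_\e$ is only $C^\infty$ away from $\pm\bar x$ while the flux $(\Phi_\e')^{p-1}$ extends $C^1$ — so I would argue that the flux of the constant piece (identically zero) matches the flux of the layer at the junction together with its derivative, making $\varphi$ a classical solution on $\R\setminus\{h_i\pm\bar x\}$ and a solution of \eqref{eq:stationary} in the appropriate (weak/strong) sense throughout $[a,b]$. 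Establishing this $C^1$-matching of the nonlinear flux cleanly, rather than of $\varphi'$ itself, is the delicate step that the subcritical structure makes possible.
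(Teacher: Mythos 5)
Your proposal is correct and follows essentially the same route as the paper: both arguments glue translated copies of the finite-width standing wave $\Phi_\e$ (and its negative) at the prescribed zeros $h_1,\dots,h_N$, using that in the subcritical case $1<n<p$ the profile reaches $\pm1$ at a finite $\bar x=\e\bar y$ with vanishing derivative, and take $\e$ small enough that the layers fit disjointly inside $[a,b]$, the two solutions being $\pm\varphi_N$. The only slip is that the constant in your formula for $\bar x$ should be $\bigl(\tfrac{2n(p-1)}{p}\bigr)^{1/p}$ rather than its reciprocal, which is immaterial since only $\bar x=\mathcal{O}(\e)$ is used; the junction regularity you carefully flag is handled by the same observation in the paper, recorded in the remark immediately following its proof.
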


\begin{proof}
To construct such stationary solutions, we use the standing wave introduced in Proposition \ref{prop:SW} and, 
in particular, we observe that $\Phi_\e(x)=\Psi(x/\e)$, where $\Psi$ is the monotone solution to
\begin{equation*}
	\left((\Psi')^{p-1}\right)'-F'(\Psi)=0, \quad \qquad \Psi(\pm\infty)=\pm1, \qquad \Psi(0)=0.
\end{equation*}
Proceeding as in the proof of Proposition \ref{prop:SW} (case (i)), we see that $\Psi(\pm\bar y)=\pm1$, where
\begin{equation*}
	\bar y:=\left(\frac{2n(p-1)}p\right)^{1/p}\int_{0}^1 \frac{ds}{(1-s^2)^{{n}/p}}<+\infty.
\end{equation*}
It follows that $\Phi_\e(x)=1$ if and only if $x\geq\bar x$, and $\Phi_\e(x)=-1$ if and only if $x\leq-\bar x$, where $\bar x=\e\bar y$.
Let us construct the stationary solution $\varphi_N$ with $N$ transitions in $h_1,\dots,h_N$ as follows:
assume $\varphi_N(a)=-1$ and choose $\e>0$ so small that $h_1-\e\bar y>a$ and $\e\bar y<\displaystyle\frac{h_{2}-h_1}{2}$; define
\begin{equation*}
	\varphi_N(x):=\Phi_\e(x-h_1), \qquad \qquad \mbox{ for } x\in [a,h_2-\e\bar y],
\end{equation*}
where $\Phi_\e$ is the solution to \eqref{eq:Fi}.
Observe that $\varphi_N(x)=-1$ in $[a,h_1-\e\bar y]$ and $\varphi_N(x)=1$ in $[h_1+\e\bar y,h_2-\e\bar y]$.
We have thus constructed the first transition between $-1$ and $+1$.
Let us now consider $\varphi_N$ in the interval $[h_i-\e\bar y,h_{i+1}-\e\bar y]$ for $i=2,\dots,N-1$,
where we choose $\e$ so small that $\e\bar y<\displaystyle{\min_{2\leq i\leq N-1}\frac{h_{i+1}-h_i}{2}}$.
For $i=2,\dots,N-1$, define
\begin{equation*}
	\varphi_N(x):=(-1)^{i+1}\Phi_\e(x-h_{i}), \qquad \qquad \mbox{ for } x\in [h_i-\e\bar y,h_{i+1}-\e\bar y].
\end{equation*}
Finally, by choosing $\e$  small enough such that $h_N+\e\bar y<b$, we define
\begin{equation*}
	\varphi_N(x):=(-1)^{N+1}\Phi_\e(x-h_N), \qquad \qquad \mbox{ for } x\in [h_N-\e\bar y,b].
\end{equation*}
Since $\Phi_\e$ and $-\Phi_\e$ are solutions to \eqref{eq:Fi} and \eqref{eq:Fi-}, respectively, 
the functions $\varphi_N$ and $-\varphi_N$ are thus two solutions to \eqref{eq:stationary} oscillating between $-1$ and $+1$
and with $N$ zeros in $h_1,\dots,h_N$; the proof is now complete.
\end{proof}

Notice that, because of the regularity  of $\Phi_\e$ proved in the case (i) of Proposition \ref{prop:SW}, $\varphi_N\in C^{\infty}([a,b]\backslash Z)$ for any $N$, 
where $Z=\{h_i\pm\e\bar y, \, i=1,\dots, N\}$; hence, $\pm \varphi_N$ are classical solutions to \eqref{eq:stationary}.

\vskip0.2cm
We now consider the case ${n}\geq p$. Here the integral \eqref{int_SW} diverges
and, as a consequence,  solutions cannot touch the values $\pm 1$. 
Similarly to the linear diffusion case $p=2$, we here have periodic solutions, as we state in the following Proposition.
\begin{prop}\label{prop:2n>p}
Let us consider problem \eqref{eq:stationary} with ${n\geq p>1}$ and fix $N\in \mathbb{N}$.
Then there exist two periodic {solutions} to \eqref{eq:stationary} with $N$ zeroes $h_1, \dots, h_N$ satisfying 
\begin{equation}\label{eq:h-periodic}
	h_1=a+\frac{b-a}{2N} \qquad \mbox{and} \qquad h_{i+1}=h_i+\frac{b-a}{N} \quad \mbox{for} \quad i=1,\dots,N-1.
\end{equation}
\end{prop}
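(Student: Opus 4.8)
The plan is to produce the two steady states by translating the periodic profiles $\Phi_{T_\e}$ constructed in Proposition \ref{periodicR}, exploiting the fact that for $n\geq p$ their period can be tuned to an arbitrary positive value. First I would pin down the geometric meaning of $T_\e(\bar s)$ in \eqref{eq:T_eps}. From the first-order reduction used in the proof of Proposition \ref{periodicR}, namely $\e|\Phi_{T_\e}'|=(p/(p-1))^{1/p}(F(\Phi_{T_\e})-F(\bar s))^{1/p}$, the $x$-distance between a zero of $\Phi_{T_\e}$ and the adjacent extremum (where $\Phi_{T_\e}=\pm\bar s$, hence $\Phi_{T_\e}'=0$) is $T_\e(\bar s)/2$, so that $T_\e(\bar s)$ is exactly the distance between two consecutive zeros and the extrema sit at the midpoints of the intervals between successive zeros. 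By \eqref{eq:T_eps-prop}, in the case $n\geq p$ the map $\bar s\mapsto T_\e(\bar s)$ is continuous and strictly increasing from $(0,1)$ onto $(0,+\infty)$; this surjectivity is the single structural ingredient I need, and, unlike in the subcritical regime, it imposes no smallness condition on $\e$.

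Next, fix $N\in\mathbb N$ and set $L:=(b-a)/N$. By the surjectivity just recalled there is a (unique) $\bar s\in(0,1)$ with $T_\e(\bar s)=L$; let $\Phi_{T_\e}$ be the associated profile normalized as in Proposition \ref{periodicR}. I would then define $\varphi(x):=\Phi_{T_\e}(x-c)$, with the translation $c$ fixed so that $x=a$ coincides with an extremum of $\Phi_{T_\e}$. With this normalization the first zero of $\varphi$ occurs at $a+L/2=a+(b-a)/(2N)=h_1$, the following zeros are spaced by $L$ and hence coincide with the points $h_i$ of \eqref{eq:h-periodic}, and the last one lies at $h_N=b-(b-a)/(2N)$. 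The role of the half-spacing offset prescribed in \eqref{eq:h-periodic} is precisely that the extremum immediately after $h_N$ then falls at $h_N+L/2=b$.

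The boundary conditions are now automatic: since both $a$ and $b$ are extrema of the translated profile, $\varphi'(a)=\varphi'(b)=0$, and therefore $\varphi$ solves \eqref{eq:stationary} on $[a,b]$ with $N$ zeros at the prescribed locations. Since $n\geq p$ forces $|\varphi|\leq\bar s<1$, the potential $F$ is smooth along the orbit of $\varphi$; at the zeros $\varphi'\neq0$, while at the extrema one verifies, exactly as in the regularity argument of Proposition \ref{prop:SW}, that $|\varphi'|^{p-2}\varphi'\in C^1$, so $\varphi$ is a classical solution. The two solutions in the statement are then $\varphi$ and $-\varphi$, corresponding to the two admissible signs of $\Phi_{T_\e}'(0)$, i.e. to starting with an upward or a downward first hump.

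I expect the only genuinely delicate point to be the bookkeeping relating $T_\e(\bar s)$, the inter-zero spacing $L$, and the zero-to-extremum distance $L/2$, together with the resulting observation that the specific offset $h_1-a=(b-a)/(2N)$ in \eqref{eq:h-periodic} is exactly what places the endpoints $a,b$ at extrema and thus yields the Neumann conditions for free. Everything else is a direct consequence of the surjectivity of $T_\e$ established in Proposition \ref{periodicR} and of the regularity discussion already carried out for the standing waves.
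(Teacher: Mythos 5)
Your proposal is correct and follows essentially the same route as the paper: both construct the steady states by translating the periodic profile $\Phi_{{}_{T_\e}}$ of Proposition \ref{periodicR}, use the surjectivity of $\bar s\mapsto T_\e(\bar s)$ onto $(0,+\infty)$ (valid precisely when $n\geq p$) to set the inter-zero spacing equal to $(b-a)/N$, and place the endpoints $a,b$ at extrema so that the Neumann conditions hold, taking $\pm$ the resulting profile for the two solutions. The only difference is cosmetic — the paper fixes the translation by putting a zero at $h_1$ and then deduces that $a$ must be a midpoint/extremum, while you fix $a$ as an extremum and deduce the zero locations — plus your added regularity check at the extrema, which the paper leaves implicit.
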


\begin{proof}
We can see the periodic solutions described in the statement as the restriction on a bounded interval of periodic solutions on the whole line, 
whose existence has been proved in Proposition \ref{periodicR}.
We thus use $\Phi_{_{T_\e}}$ to construct a periodic solution $\psi_N$ with exactly $N$ zeros in the interval $[a,b]$ 
located at $h_1, \dots, h_N$ satisfying \eqref{eq:h-periodic} and with $\psi_N'(a)=\psi'_N(b)=0$ as follows:  
first, we define $\psi_N(x)=\Phi_{_{T_\e}}(x-h_1)$. 
By doing this, we are shifting the zero of $\Phi_{_{T_\e}}$ (recall that $\Phi_{_{T_\e}}(0)=0$) in the point $h_1$ and, as a consequence, 
all the other zeros of $\psi_N$ will be located {at} $h_i=h_1\pm iT_\e$, for $i=2,\dots,N$. 
Second, we impose $\psi_N'(a)=0$, which implies that $a$ has to be the middle point between $h_1-T_\e$ and $h_1$; 
consequently, we see that we have to choose $\bar s$ such that $T_\e(\bar s)=(b-a)/N$, which will thus be the period of $\psi_N$.
It is easy to check that both $\pm\psi_N$ are periodic solutions to \eqref{eq:stationary}, 
with $N$ zeros at $h_1,\dots,h_N$ satisfying \eqref{eq:h-periodic}, and the proof is complete.
\end{proof}

\begin{rem}
Proposition \ref{prop:2n>p} is valid for any $\e>0$, and throughout its proof we profited from the behavior of $T_\e(\bar s)$ only with respect to $\bar s\in(0,1)$
(see \eqref{eq:T_eps} and \eqref{eq:T_eps-prop}).
As a consequence, if $N\gg1$ (a large number of transitions), then $T_\e(\bar s)=(b-a)/N\ll1$ (a very small period) and one has to take $\bar s$ very close to $0$, 
meaning that the periodic solutions are \emph{small} oscillations around zero.
On the other hand, it is easy to check (see equation \eqref{eq:T_eps}) that $T_\e(\bar s)$ is an increasing function of $\e$, which satisfies for any $\bar s\in(0,1)$,
\begin{equation*}
	\lim_{\e\to 0^+} T_\e(\bar s)=0 \qquad \mbox{ and }  \qquad \lim_{\e\to +\infty} T_\e(\bar s)=+\infty,
\end{equation*}
and, as an alternative, one could choose an appropriate $\e$ so that $T_\e(\bar s)=(b-a)/N$.
In particular, when the interval $[a,b]$ and the number of zeros $N$ are fixed,
we have that the periodic solutions oscillate between $\pm\bar s$ with $\bar s\to1^-$ as $\e\to0^+$. 
\end{rem}

\subsection*{Comments on stationary solutions in bounded intervals} 
We underline that the main differences between Propositions \ref{prop:2n<p} and \ref{prop:2n>p} are that, in the latter, 
periodic solutions oscillate between the values $\pm 1$ without never touching them, and that the locations of the $N$ zeros is not arbitrary; 
indeed, the $N$ transitions have to be equidistant (see \eqref{eq:h-periodic}).
On the other hand, in the case ${n}< p$ the locations $h_1, \dots h_N$ are totally arbitrary. 
It is important to emphasize, however, that due to the fact that such locations are arbitrary, they can also be chosen equidistant, 
and Proposition \ref{prop:2n<p} actually includes the existence of periodic solutions as well. 
Finally it is worth noticing that, using the periodic solutions constructed in Proposition \ref{periodicR}
we can prove a result similar to Proposition \ref{prop:2n>p} also in the case $n< p$. This provides the existence of periodic solutions satisfying $|\varphi_N|<1$.

\section{The critical case ${n}=p$: exponentially slow motion}\label{sec:2n=p}
The aim of this section is to show the existence of metastable states for the initial boundary $p$-Laplacian value (IBPV) problem \eqref{eq:P-model}-\eqref{eq:Neu}-\eqref{eq:initial}, 
and that such metastable states maintain the same unstable structure of the initial datum for an exponentially long time,
i.e., for a time $T_\e\geq e^{Ap/2\e}$, with $A>0$ {dependent on $p$, but independent of $\e$}.
From now on, we shall consider equation \eqref{eq:P-model} with ${n}=p$, that is, equation
\begin{equation}\label{eq:2n=p-model}
	u_t=\e^p(|u_x|^{p-2}u_x)_x-F'(u), \qquad \mbox{ where } \qquad  F(u)= {\frac{1}{2p}|u^2-1|^p, \quad p>1.}
\end{equation}
We start by proving a lemma which shows that the energy \eqref{eq:energy} is a non-increasing function of time $t$ 
along the solutions of \eqref{eq:2n=p-model} with homogeneous Neumann boundary conditions \eqref{eq:Neu}.
\begin{lem}\label{lem:energy}
Let $u\in C([0,T],H^2(a,b))$ be solution of \eqref{eq:2n=p-model}-\eqref{eq:Neu}.
If $E_\e$ is the functional defined in \eqref{eq:energy} then 
\begin{equation}\label{eq:energy-dec}
	\frac{d}{dt}E_\e[u](t)=-\e^{-1}\|u_t(\cdot,t)\|^2_{{}_{L^2}},
\end{equation}
for any $t\in(0,T)$.
\end{lem}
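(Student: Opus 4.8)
The plan is to establish the energy dissipation identity \eqref{eq:energy-dec} by a direct computation, differentiating $E_\e[u](t)$ under the integral sign and integrating by parts to exploit the structure of \eqref{eq:2n=p-model} together with the Neumann boundary conditions \eqref{eq:Neu}. First I would recall that $E_\e[u] = \e^{-1} E[u]$, so it suffices to compute $\frac{d}{dt}E[u](t)$ and multiply by $\e^{-1}$ at the end. Writing $E[u](t) = \int_a^b \bigl[\tfrac{1}{p}\e^p|u_x|^p + F(u)\bigr]\,dx$ and differentiating, the regularity assumption $u\in C([0,T],H^2(a,b))$ justifies moving the time derivative inside the integral, giving
\begin{equation*}
	\frac{d}{dt}E[u](t)=\int_a^b\Bigl[\e^p|u_x|^{p-2}u_x\,u_{xt}+F'(u)\,u_t\Bigr]\,dx,
\end{equation*}
where I have used $\frac{d}{dt}\bigl(\tfrac{1}{p}|u_x|^p\bigr)=|u_x|^{p-2}u_x\,u_{xt}$.

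Next I would integrate the first term by parts in $x$, transferring the $x$-derivative off of $u_{xt}=(u_t)_x$ and onto the factor $\e^p|u_x|^{p-2}u_x$. This produces the boundary term $\bigl[\e^p|u_x|^{p-2}u_x\,u_t\bigr]_a^b$, which vanishes precisely because the Neumann conditions \eqref{eq:Neu} force $u_x(a,t)=u_x(b,t)=0$, and the interior term $-\int_a^b \e^p(|u_x|^{p-2}u_x)_x\,u_t\,dx$. Collecting everything yields
\begin{equation*}
	\frac{d}{dt}E[u](t)=\int_a^b\Bigl[-\e^p(|u_x|^{p-2}u_x)_x+F'(u)\Bigr]u_t\,dx=-\int_a^b u_t^2\,dx,
\end{equation*}
where the last equality substitutes the PDE \eqref{eq:2n=p-model} in the form $u_t=\e^p(|u_x|^{p-2}u_x)_x-F'(u)$, so that the bracketed quantity equals $-u_t$. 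Dividing by $\e$ gives \eqref{eq:energy-dec}. This is exactly the $L^2$ gradient-flow structure anticipated in \eqref{eq:energy-var-intro} of the Introduction.

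The main obstacle is analytic rather than algebraic: justifying the interchange of differentiation and integration and the integration by parts when $p$ is arbitrary in $(1,\infty)$, since the flux $\e^p|u_x|^{p-2}u_x$ is only as regular as the quasilinear structure allows and $|u_x|^{p-2}$ may be singular where $u_x$ vanishes (for $1<p<2$) or the term $(|u_x|^{p-2}u_x)_x$ may need to be understood in a weak sense. Under the stated hypothesis $u\in C([0,T],H^2(a,b))$ the computation is formally clean, and the cleanest route is to read \eqref{eq:energy-dec} as the rigorous counterpart of the gradient-flow identity; a fully rigorous treatment would replace the pointwise chain rule by testing the weak formulation of \eqref{eq:2n=p-model} against $u_t$ and invoking the chain rule for the convex energy $E$ along the flow, which avoids differentiating $|u_x|^{p-2}$ directly. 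I expect the author to proceed with the formal computation under the given regularity, since the boundary term vanishing and the PDE substitution are the only genuinely load-bearing steps.
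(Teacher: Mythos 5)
Your proposal is correct and follows essentially the same route as the paper: differentiate under the integral, integrate by parts using the Neumann conditions to kill the boundary term, and substitute the equation to identify the integrand with $-u_t^2$. The additional remarks on justifying the formal computation for general $p>1$ are reasonable but go beyond what the paper itself does, which carries out exactly the formal calculation you describe.
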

\begin{proof}
By differentiating with respect to time \eqref{eq:energy}, we obtain 
\begin{equation*}
	\frac{d}{dt}E_\e[u]=\frac1\e\int_a^b\left[ \e^{p}|u_x|^{p-2} u_x u_{xt}+F'(u)u_t\right]\,dx,
\end{equation*}
Integrating by parts  and using the boundary conditions \eqref{eq:Neu} in the first term, we get
\begin{equation*}
\begin{aligned}
	\frac{d}{dt}E_\e[u]&=\frac1\e\int_a^b\left[ -\e^{p} \left(|u_x|^{p-2} u_x\right)_x u_{t}+F'(u)u_t\right]\,dx \\
	&= -\frac{1}{\e} \int_a^b u_t \left[ \e^p \left(|u_x|^{p-2} u_x\right)_x-F'(u) \right] \, dx \\
	&=-\frac{1}{\e} \int_{a}^b u_t^2 \, dx,
	\end{aligned}
\end{equation*}
where we used the fact that $u$ satisfies \eqref{eq:2n=p-model}, and we obtain exactly \eqref{eq:energy-dec}.
\end{proof}
We now make use of the generalized Young inequality
\begin{equation}\label{young}
a b \leq \frac{a^p}{p}+ \frac{b^q}{q}, \qquad \mbox{with} \quad \frac{1}{p}+\frac{1}{q}=1,
\end{equation}
with the choice 
$$a=\e |u_x| \quad \mbox{and}  \quad b= \sqrt[q]{q F(u)}=\left( \frac{p}{p-1} F(u)\right)^{\frac{p-1}{p}}.$$
We have
\begin{equation}\label{eq:c_eps}
	E_\e[{u}]\geq\int_a^b  |u_x| \left( \frac{p}{p-1} F(u)\right)^{\frac{p-1}{p}} \,dx=\left(\frac{p}{p-1}\right)^{\frac{p-1}{p}}\int_{-1}^{+1} {F(s)}^\frac{p-1}{p}\,ds=:c_p.
\end{equation}
As we will see, the positive constant $c_p$ represents the minimum energy to have a transition between $-1$ and $+1$. 
It is to be observed that when $p=2$, one has 
\begin{equation*}
	 c_2=\int_{-1}^{+1}\sqrt{2F(s)}\,ds,
\end{equation*}
which is the minimum energy in the case of the classical Allen--Cahn equation \cite{BrKo90}.
In the following, we shall improve \eqref{eq:c_eps} by showing that if a function $u$ is sufficiently close in $L^1$ to a piecewise constant function $v$
assuming only the values $\pm1$, then the energy of $u$ satisfies the \emph{lower bound},
\begin{equation*}
	E_\varepsilon[u]\geq Nc_p-C_1\exp(-C_2/\varepsilon),
\end{equation*}
for some positive constants $C_1,C_2$ independent on $\e$.
In order to prove such variational result, we give the following definitions:
let us fix here, and throughout  the rest of the paper, $N\in\mathbb{N}$ and a {\it piecewise constant function} $v$ with $N$ jumps as follows:
\begin{equation}\label{vstruct}
	v:[a,b]\rightarrow\{-1,1\}\  \hbox{with $N$ jumps located at } a<h_1<h_2<\cdots<h_N<b.
\end{equation}	
Moreover, we fix $r>0$ such that
\begin{equation}\label{eq:r}
	h_i+r<h_{i+1}-r, \ \hbox{ for}\ i=1,\dots,N, \qquad   a\leq h_1-r,\qquad h_N+r\leq b.
\end{equation}
Finally, for {$p>1$}, define 
\begin{equation}\label{eq:lambda}
	\lambda_p:= 2^{1-\frac{1}{p}}{(p-1)^{-\frac{1}{p}}}.
\end{equation}
\begin{prop}\label{prop:lower}
Let $F$ be as in \eqref{eq:2n=p-model}, $v$ be as in \eqref{vstruct}, and fix $A\in(0,r\sqrt2\lambda_p)$, 
where $r$ satisfies \eqref{eq:r} and $\lambda_p$ is defined in \eqref{eq:lambda}.
Then there exist $\e_0,C,\delta>0$ (depending only on $p,v$ and $A$) such that if $u\in H^1(a,b)$ satisfies 
\begin{equation}\label{eq:u-v}
	\|u-v\|_{{}_{L^1}}\leq\delta,
\end{equation}
then for any $\e\in(0,\e_0)$,
\begin{equation}\label{eq:lower}
	E_\varepsilon[u]\geq Nc_p-C\exp(-Ap/2\varepsilon),
\end{equation}
where $E_\e$ and $c_p$ are defined in \eqref{eq:energy} and \eqref{eq:c_eps}, respectively.
\end{prop}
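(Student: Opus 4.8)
The plan is to adapt the Modica--Mortola/Bronsard--Kohn energy method, sharpened in the spirit of Grant \cite{Grnt95} so as to obtain an exponentially (rather than merely algebraically) small error. First I would discard the trivial case: if $E_\varepsilon[u]\ge Nc_p$ then \eqref{eq:lower} holds for any $C>0$, so I may assume throughout the a priori bound $E_\varepsilon[u]<Nc_p$, which is what will ultimately force $u$ to resemble the optimal profile near each well. The starting point is the pointwise inequality behind \eqref{eq:c_eps}: by the generalized Young inequality \eqref{young},
\[
\frac{\varepsilon^{p-1}|u_x|^p}{p}+\frac{F(u)}{\varepsilon}\ \ge\ |u_x|\Big(\tfrac{p}{p-1}F(u)\Big)^{\frac{p-1}{p}}=\big|(G\circ u)_x\big|,\qquad G(s):=\Big(\tfrac{p}{p-1}\Big)^{\frac{p-1}{p}}\int_0^s F(\tau)^{\frac{p-1}{p}}\,d\tau,
\]
so that $E_\varepsilon[u]\ge \mathrm{TV}(G\circ u)$, with $c_p=G(1)-G(-1)$ by \eqref{eq:c_eps}. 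Equality in \eqref{young} holds precisely when $\varepsilon|u_x|=(\tfrac{p}{p-1}F(u))^{1/p}$, i.e.\ exactly along the standing wave $\Phi_\varepsilon$ of Proposition \ref{prop:SW}, a fact I will exploit below.

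Next I would set up the geometry. Using \eqref{eq:u-v} with $\delta$ small together with the intermediate value theorem, I locate, in each slab $(h_i-r,h_i+r)$, points at which $u$ attains prescribed values near $\pm1$; this partitions $[a,b]$ into $N$ transition intervals, each straddling one jump $h_i$ and joining a neighbourhood of $-1$ to a neighbourhood of $+1$, interspersed with plateau intervals on which $v$ is constant and $u$ stays near the corresponding well. Choosing the splitting points at the closest approaches $\mu_k$ of $u$ to its well on each plateau, the identity $\mathrm{TV}(G\circ u)=\sum_k\big|G(u(\mu_{k+1}))-G(u(\mu_k))\big|$ gives
\[
E_\varepsilon[u]\ \ge\ Nc_p-C_\ast\sum_k\big[(1-M_k)^p+(1+m_k)^p\big],
\]
where $M_k:=\max u$ (resp.\ $m_k:=\min u$) over a $+1$-plateau (resp.\ $-1$-plateau) measures how far $u$ stays from the well, and I have used $G(1-s)=G(1)-C_\ast s^p(1+o(1))$ as $s\to0^+$, which follows from the behaviour $F(u)\sim \tfrac{2^{p-1}}{p}|1\mp u|^p$ of the potential \eqref{defF} near $\pm1$.

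The heart of the proof — and the step I expect to be the main obstacle — is to show that the total cap deficit $\sum_k[(1-M_k)^p+(1+m_k)^p]$ is $O(\exp(-Ap/2\varepsilon))$. The point is that a cap which is only polynomially small cannot coexist with the a priori bound $E_\varepsilon[u]<Nc_p$: if $u$ failed to approach a well by an amount $s_k$ throughout a plateau $P_k$, the potential term alone would contribute $\gtrsim |P_k|\,s_k^p/\varepsilon$, and since \eqref{eq:r} forces $|P_k|$ to be bounded below by a fixed multiple of $r$, for small $\varepsilon$ this excess would exceed the corresponding saving $C_\ast s_k^p$ in the transition energy unless $s_k$ is exponentially small. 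Making this quantitative requires balancing the total-variation shortfall above against the Young excess $\varepsilon^{p-1}|u_x|^p/p+F(u)/\varepsilon-|(G\circ u)_x|\ge0$, which is necessarily created when $u$ turns around at its closest approach (where $u_x=0$ is far from the optimal slope); the sharp rate is dictated by the exponential relaxation of the optimal profile toward $\pm1$, which in the critical case $n=p$ is explicit, namely $\Phi_\varepsilon(x)=\tanh(C_p x/\varepsilon)$ from \eqref{eq:tanh} with $1-\Phi_\varepsilon\sim2e^{-\lambda_p x/\varepsilon}$ and $\lambda_p$ as in \eqref{eq:lambda} (see Remark \ref{rem:standing}). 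Comparing $u$ with $\Phi_\varepsilon$ over a distance of order $r$ then yields $s_k\lesssim \exp(-\lambda_p r/\varepsilon)$, hence $s_k^p\lesssim\exp(-\lambda_p p r/\varepsilon)$; since $A<r\sqrt2\,\lambda_p$ implies $Ap/2<\lambda_p p r$, there is room to absorb all multiplicative constants and the factor $N$ into a single $C$ and to arrive at \eqref{eq:lower}. The two delicate points I anticipate are carrying out the tail/comparison estimate rigorously for an arbitrary $H^1$ competitor (not the profile itself), and handling the boundary plateaus adjacent to $a$ and $b$, where the Neumann condition \eqref{eq:Neu} supplies the vanishing of $u_x$ in place of one half of the compensating interior potential.
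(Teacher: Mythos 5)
Your opening moves are fine and parallel the paper's own first steps: the Young/Modica--Mortola bound $E_\e[u]\geq \mathrm{TV}(G\circ u)$, the location of points near $\pm1$ in each slab via the $L^1$ hypothesis, and the resulting estimate $E_\e[u]\geq Nc_p-C_\ast\sum_k\big[(1-M_k)^p+(1+m_k)^p\big]$. But the step you yourself identify as the heart of the proof --- that the total cap deficit is $O(e^{-Ap/2\e})$ --- is not proved, and the heuristic you offer for it does not work. In the balancing ``potential cost $\gtrsim |P_k|s_k^p/\e$ versus saving $C_\ast s_k^p$'', both sides scale like $s_k^p$, so the comparison carries no information about $s_k$ whatsoever: taken literally it would say that for $\e<|P_k|/C_\ast$ \emph{no} deficit is affordable, i.e.\ $E_\e[u]\geq Nc_p$, which is false --- Proposition \ref{prop:ex-met} exhibits competitors with $E_\e[u^\e]<Nc_p$. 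The flaw is twofold: the plateau potential energy has already been spent when you convert energy into total variation between consecutive closest approaches (double counting), and a good competitor does not ``stay at distance $s_k$'' but relaxes toward the well, so its plateau cost is $O(s_k^p)$, not $O(|P_k|s_k^p/\e)$. If one repairs the double count (say, by reserving a fraction $\theta$ of the potential before applying Young and optimizing $\theta\sim\e$), the argument delivers only $E_\e[u]\geq Nc_p-C\e$, and bootstrapping gives $Nc_p-C_k\e^k$ for every $k$: these are exactly the algebraic bounds of Proposition \ref{prop:lower_deg} and Remark \ref{sharp} specialized to $n=p$, and no iteration of this idea reaches an exponential remainder.

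The exponential rate requires the ingredient your proposal names but never supplies: the assertion ``comparing $u$ with $\Phi_\e$ over a distance of order $r$ yields $s_k\lesssim\exp(-\lambda_p r/\e)$'' is the entire crux, and no such pointwise comparison holds for an arbitrary $H^1$ competitor --- nothing forces $u$ to track the standing wave, and indeed $u$ itself may stay far from the well at a cost that only shows up in its energy. The paper's resolution is variational-plus-ODE: on each outer interval $[h_i+r_+,h_i+r]$ of fixed length $\geq r-\hat r$ it replaces $u$ by the \emph{minimizer} $z$ of the energy with Dirichlet datum $z(h_i+r_+)=u(h_i+r_+)$ and free right endpoint. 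Either the range of $z$ exits $(1-\rho_1,1+\rho_1)$, in which case the choice \eqref{eq:forrho2} of $\rho_2\ll\rho_1$ makes the total variation of $G\circ z$ alone exceed the missing tail $\beta_p$; or $z$ solves the Euler--Lagrange equation \eqref{Euler}, and then $\psi=(z-1)^2$ satisfies $\psi''\geq(\mu/\e)^2\psi$ with $\psi'(h_i+r)=0$, so the explicit $\cosh$ barrier and the maximum principle give $|z(h_i+r)-1|\leq\sqrt2\,\rho_2\,e^{-A/2\e}$, whence the missing piece of $G$ is $O(e^{-Ap/2\e})$; minimality then transfers the lower bound to $u$. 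This is where the exponential rate, and the hypothesis $A<r\sqrt2\lambda_p$ (through \eqref{eq:nu}), actually come from, and nothing in your outline produces it. A final side remark: your concern about the plateaus adjacent to $a$ and $b$ is moot --- the proposition is purely variational and assumes no boundary condition on $u$; in the paper's proof the intervals outside $\bigcup_i(h_i-r,h_i+r)$ are simply discarded, and Neumann conditions enter only as the \emph{natural} boundary condition of the minimizer $z$, not as a property of the competitor.
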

\begin{proof}
Fix $u\in H^1(a,b)$ satisfying \eqref{eq:u-v}, and take $\hat r\in(0,r)$ and $\rho_1$ so small that 
\begin{equation}\label{eq:nu}
	{A\leq\lambda_p(r-\hat r)\sqrt{2-3\rho_1}}.
\end{equation}
Then, choose $0<\rho_2 < \rho_1$  sufficiently small such that
\begin{equation}\label{eq:forrho2}
\begin{aligned}
	\int_{1-\rho_1}^{1-\rho_2}F(s)^{\frac{p-1}{p}}\,ds&>\int_{1-\rho_2}^{1}F(s)^{\frac{p-1}{p}}\,ds,  \\
	\int_{-1+\rho_2}^{-1+\rho_1}F(s)^{\frac{p-1}{p}}\,ds&> \int_{-1}^{-1+\rho_2}F(s)^{\frac{p-1}{p}}\,ds.
	\end{aligned}
\end{equation}
We now focus our attention on $h_i$, one of the points of discontinuity of $v$. To fix ideas, 
let $v(h_i\pm r)=\pm1$, the other case being analogous.
We claim that there exist $r_+$ and $r_-$ in $(0,\hat r)$ such that
\begin{equation}\label{2points}
	|u(h_i+r_+)-1|<\rho_2, \qquad \quad \mbox{ and } \qquad \quad |u(h_i-r_-)+1|<\rho_2.
\end{equation}
Indeed, assume by contradiction that $|u-1|\geq\rho_2$ for $u \in (h_i,h_i+\hat r)$; then
\begin{equation*}
	\delta\geq\|u-v\|_{{}_{L^1}}\geq\int_{h_i}^{h_i+\hat r}|u-v|\,dx\geq\hat r\rho_2,
\end{equation*}
and this leads to a contradiction if we choose $\delta\in(0,\hat r\rho_2)$.
Similarly, one can prove the existence of $r_-\in(0,\hat r)$ such that $|u(h_i-r_-)+1|<\rho_2$.

Now, we consider the interval $(h_i-r,h_i+r)$ and claim that
\begin{equation}\label{eq:claim}
	\int_{h_i-r}^{h_i+r}\left[\frac{\e^{p-1}|u_x|^p}{p}+\frac{F(u)}\e\right]\,dx\geq c_p-\frac{C}N\exp(-Ap/2\varepsilon),
\end{equation}
for some $C>0$ independent on $\e$.
Observe that from \eqref{eq:c_eps}, it follows that for any $a\leq c<d\leq b$,
\begin{equation}\label{eq:ineq}
	\int_c^d\left[\frac{\e^{p-1}|u_x|^p}{p}+\frac{F(u)}\e\right]\,dx \geq \left|\int_{u(c)}^{u(d)}\left( \frac{p}{p-1} F(s)\right)^{\frac{p-1}{p}} \,ds\right|.
\end{equation}
Hence, if $u(h_i+r_+)\geq1$ and $u(h_i-r_-)\leq-1$, then from \eqref{eq:ineq} we can conclude that
\begin{equation*}
	\int_{h_i-r_-}^{h_i+r_+}\left[\frac{\e^{p-1}|u_x|^p}{p}+\frac{F(u)}\e\right]\,dx\geq c_p,
\end{equation*}
which implies \eqref{eq:claim}. On the other hand, from \eqref{eq:ineq} we obtain
\[
\int_{h_i-r_-}^{h_i+r_+}\left[\frac{\e^{p-1}|u_x|^p}{p}+\frac{F(u)}\e\right]\,dx \geq \int_{u(h_i - r_-)}^{u(h_i + r_+)} \left( \frac{p}{p-1} F(s)\right)^{\frac{p-1}{p}}\,ds,
\]
yielding, in turn,
%
\begin{align}
	\int_{h_i-r}^{h_i+r}\left[\frac{\e^{p-1}|u_x|^p}{p}+\frac{F(u)}\e\right]\,dx & 
	\geq \int_{h_i+r_+}^{h_i+r}\left[\frac{\e^{p-1}|u_x|^p}{p}+\frac{F(u)}\e\right]\,dx\notag \\ 
	& \quad + \int_{h_i-r}^{h_i-r_-}\left[\frac{\e^{p-1}|u_x|^p}{p}+\frac{F(u)}\e\right]\,dx \notag \\
	& \quad +\int_{-1}^{1}\left( \frac{p}{p-1} F(s)\right)^{\frac{p-1}{p}}\,ds\notag\\
	&\quad-\int_{-1}^{u(h_i-r_-)}\left( \frac{p}{p-1} F(s)\right)^{\frac{p-1}{p}}\,ds \notag \\
	& \quad-\int_{u(h_i+r_+)}^{1}\left( \frac{p}{p-1} F(s)\right)^{\frac{p-1}{p}}\,ds\notag \\
	&=:I_1+I_2+c_p-\alpha_p-\beta_p. \label{eq:Pe}
\end{align}
Let us estimate the first two terms of \eqref{eq:Pe}. 
Regarding $I_1$, assume that $1-\rho_2<u(h_i+r_+)<1$ and consider the unique minimizer $z:[h_i+r_+,h_i+r]\rightarrow\R$ 
of $I_1$ subject to the boundary condition $z(h_i+r_+)=u(h_i+r_+)$.
If the range of $z$ is not contained in the interval $(1-\rho_1,1+\rho_1)$, then from \eqref{eq:ineq} it follows that
\begin{equation}\label{E>fi}
	\int_{h_i+r_+}^{h_i+r}\left[\frac{\e^{p-1}|u_x|^p}{p}+\frac{F(u)}\e\right]\,dx \geq \int_{u(h_i+r_+)}^{1}\left( \frac{p}{p-1} F(s)\right)^{\frac{p-1}{p}}\,ds=\beta_p,
\end{equation}
by the choice of $r_+$ and $\rho_2$. 
Suppose, on the other hand, that the range of $z$ is contained in the interval $(1-\rho_1,1+\rho_1)$. 
Then, the Euler-Lagrange equation for $z$ is
\begin{equation}\label{Euler}
\begin{aligned}
	&z''(x) z'(x)^{p-2}=\e^{-p}(p-1)^{-1}F'(z(x)), \quad \qquad x\in(h_i+r_+,h_i+r),\\
	&z(h_i+r_+)=u(h_i+r_+), \quad \qquad z'(h_i+r)=0.
\end{aligned}
\end{equation}
For later use, multiply \eqref{Euler} by $z'(x)$ to obtain
$$
\left( \frac{\e^p}{p}(p-1)z'(x)^p-F(z(x))\right)'=0,
$$
which implies
$$
\frac{\e^p}{p}(p-1)z'(x)^p= F(z(x))-F(z(h_i+r)).
$$
In particular
\begin{equation}\label{uselater}
\frac{\e^p}{p}(p-1)z'(x)^p< F(z(x)).
\end{equation}
Let us now define $\psi(x):=(z(x)-1)^2$. Then we have $\psi'=2(z-1)z'$ and 
\begin{equation*}
\begin{aligned}
	\psi''(x)&=2(z(x)-1)z''(x)+2z'(x)^2 \\
	&\geq2(z(x)-1)z''(x) \\
	& = 2(z(x)-1)\frac{F'(z(x))}{\e^p(p-1)}z'(x)^{2-p} \\
	& \geq \frac{2}{\e^{2-p}}  \left( \frac{p}{p-1}\right)^{{\frac{2-p}{p}}}(z(x)-1)\frac{F'(z(x))}{\e^p(p-1)}  F(z(x))^{\frac{2-p}{p}},
\end{aligned}
\end{equation*}
where in the last inequality we used \eqref{uselater} and the fact that $F'(z)(z-1) \geq 0$ for $z \in (1-\rho_1,1+\rho_1)$. Recalling the explicit form of $F$, we end up with
\begin{equation*}
\begin{aligned}
\psi''(x) & \geq \frac{2}{\e^2} \frac{1}{(p-1)^{\frac{2}{p}}} \frac{1}{2^{\frac{2-p}{p}}} (z(x)-1)^2(z(x)+1) z(x) \\
&\geq \frac{2}{\e^2} \frac{1}{(p-1)^{\frac{2}{p}}} \frac{1}{2^{\frac{2-p}{p}}} (z(x)-1)^2 (2-\rho_1)(1-\rho_1)\\
&\geq \frac{2^{2-\frac2p}}{\e^2}\frac{1}{(p-1)^{\frac{2}{p}}}(2-3\rho_1) \psi(x) \\
& = \frac{\lambda^2}{\e^2}(2-3\rho_1)\psi(x),
\end{aligned}
\end{equation*}
where we used the fact that $|z(x)-1|\leq\rho_1$ for any $x\in[h_i+r_+,h_i+r]$.
Denoting by $\mu=A/(r-\hat r)$ and using \eqref{eq:nu} we get
\begin{align*}
	\psi''(x)-\frac{\mu^2}{\varepsilon^2}\psi(x)\geq0, \quad \qquad x\in(h_i+r_+,h_i+r),\\
	\psi(h_i+r_+)=(u(h_i+r_+)-1)^2, \quad \qquad \psi'(h_i+r)=0.
\end{align*}
It follows that we can compare $\psi$ with the solution $\hat \psi$ to
\begin{align*}
	\hat\psi''(x)-\frac{\mu^2}{\varepsilon^2}\hat\psi(x)=0, \quad \qquad x\in(h_i+r_+,h_i+r),\\
	\hat\psi(h_i+r_+)=(u(h_i+r_+)-1)^2, \quad \qquad \hat\psi'(h_i+r)=0,
\end{align*}
which can be explicitly calculated to be
\begin{equation*}
	\hat\psi(x)=\frac{(u(h_i+r_+)-1)^2}{\cosh\left[\frac\mu\varepsilon(r-r_+)\right]}\cosh\left[\frac\mu\varepsilon(x-(h_i+r))\right].
\end{equation*}
By the maximum principle $\psi(x)\leq\hat\psi(x)$, implying
\begin{equation*}
	\psi(h_i+r)\leq\frac{(u(h_i+r_+)-1)^2}{\cosh\left[\frac\mu\varepsilon(r-r_+)\right]}\leq2(u(h_i+r_+)-1)^2 \exp\left(-\frac{\mu (r-\hat r)}{\e}\right).
\end{equation*}
Since $A=\mu (r-\hat r)$, we thus have 
\begin{equation}\label{|z-v+|<exp}
	|z(h_i+r)-1|\leq\sqrt2\rho_2\exp(-A/2\varepsilon).
\end{equation}
Finally, recalling that $F(s)= {\frac{1}{2p}|1-s^2|^p}$ and using \eqref{|z-v+|<exp} we obtain
\begin{equation}\label{fi<exp}
\begin{aligned}
	\left|\int_{z(h_i+r)}^{1}\left( \frac{p}{p-1} F(s)\right)^{\frac{p-1}{p}}\,ds\right|&\leq \frac{1}{p}\left( \frac{1}{2(p-1)}\right)^{\frac{p-1}{p}}|z(h_i+r)-1|^{{p}} \\
	&\leq \frac{2^{\frac{p}{2}}}{p} \left( \frac{1}{2(p-1)}\right)^{\frac{p-1}{p}}\,\rho_2^{p}\,\exp(-Ap/2\varepsilon) \\
	& =K_p\exp(-Ap/2\varepsilon).
	\end{aligned}
\end{equation}
From \eqref{eq:ineq}-\eqref{fi<exp} it follows that, 
\begin{align}
	\int_{h_i+r_+}^{h_i+r}\left[\frac{\e^{p-1}|z_x|^p}{p}+\frac{F(z)}\e\right]\,dx &\geq \left|\int_{z(h_i+r_+)}^{1}\left( \frac{p}{p-1} F(s)\right)^{\frac{p-1}{p}}\,ds\,-\right.\nonumber \\
	&\qquad \qquad\left.\int_{z(h_i+r)}^{1}\left( \frac{p}{p-1} F(s)\right)^{\frac{p-1}{p}}\,ds\right| \nonumber\\
	& \geq\beta_p-\frac{C}{2N}\exp(-Ap/2\varepsilon), \label{E>fi-exp}
\end{align}
where
$C=2N K_p > 0.$
Combining \eqref{E>fi} and \eqref{E>fi-exp}, we get that, independently  on its range, the  minimizer $z$ of the proposed variational problem satisfies 
\begin{equation*}	
	\int_{h_i+r_+}^{h_i+r}\left[\frac{\e^{p-1}|z_x|^p}{p}+\frac{F(z)}\e\right]\,dx \geq\beta_\e-\frac{C}{2N}\exp(-Ap/2\varepsilon).
\end{equation*}
The restriction of $u$ to $[h_i+r_+,h_i+r]$ is an admissible function. Hence, it satisfies the same estimate and we have
\begin{equation}\label{eq:I1}
	I_1\geq\beta_p-\frac{C}{2N}\exp(-Ap/2\varepsilon).
\end{equation}
The term $I_2$ on the right hand side of \eqref{eq:Pe} is estimated similarly by analyzing  the interval $[h_i-r,h_i-r_-]$ 
and using the second condition of \eqref{eq:forrho2} to obtain the corresponding inequality \eqref{E>fi}.
The obtained lower bound reads
\begin{equation}\label{eq:I2}	
	I_2\geq\alpha_p-\frac{C}{2N}\exp(-Ap/2\varepsilon).
\end{equation}
Finally, by substituting \eqref{eq:I1} and \eqref{eq:I2} in \eqref{eq:Pe}, we deduce \eqref{eq:claim}.
Summing up all of these estimates for $i=1, \dots, N$, namely for all transition points, we end up with
\begin{equation*}
	E_\varepsilon[u]\geq\sum_{i=1}^N\int_{h_i-r}^{h_i+r}\left[\frac{\e^{p-1}|u_x|^p}{p}+\frac{F(u)}\e\right]\,dx\geq Nc_p-C\exp(-Ap/2\varepsilon),
\end{equation*}
and the proof is complete.
\end{proof}
Lemma \ref{lem:energy} and Proposition \ref{prop:lower} are the key ingredients to apply the energy approach 
introduced in \cite{BrKo90} and to proceed as in \cite{Fol19,FLM19,FPS1,Grnt95}.
First of all, we give the definition of a function with a \emph{transition layer structure}. 
\begin{defn}\label{def:TLS}
We say that a function $u^\e\in H^1(a,b)$ has an \emph{$N$-transition layer structure} if 
\begin{equation}\label{eq:ass-u0}
	\lim_{\varepsilon\rightarrow 0} \|u^\varepsilon-v\|_{{}_{L^1}}=0,
\end{equation}
where $v$ is as in \eqref{vstruct}, and there exist constants $C>0$, $A\in(0,r\sqrt2\lambda_p)$ 
(with $r$ satisfying \eqref{eq:r} and $\lambda_p$ defined in \eqref{eq:lambda}) such that
\begin{equation}\label{eq:energy-ini}
	E_\varepsilon[u^\varepsilon]\leq Nc_p+C\exp(-Ap/2\e),
\end{equation}
for any $\varepsilon\ll1$, where the energy $E_\e$ and the positive constant $c_ p$ are defined in \eqref{eq:energy} and in \eqref{eq:c_eps}, respectively.
\end{defn}
\begin{rem}
We stress that functions satisfying Definition \ref{def:TLS} are neither stationary solutions to \eqref{eq:2n=p-model} nor they are close to them; 
indeed, in the case $n = p$ (and in the case $n > p$ as well) the only non constant stationary solutions are periodic (cfr. Proposition \ref{prop:2n>p}), 
while the transition points $h_j$  of Definition \ref{def:TLS} are chosen arbitrarily. 
In contrast, when ${n}< p$ stationary solutions constructed in Proposition \ref{prop:2n<p} satisfy conditions \eqref{eq:ass-u0}-\eqref{eq:energy-ini} 
(see Proposition \ref{prop:ex-met} and Remark \ref{rem:translayer}); 
hence, as we have already remarked in the Introduction, they are invariant under the dynamics of \eqref{eq:P-model} and there is no interest in studying their long time behavior.

Finally, observe that condition \eqref{eq:ass-u0} fixes the number of transitions and their relative positions in the limit $\varepsilon\to0$,
while condition \eqref{eq:energy-ini} requires that the energy  exceeds at most by $C\exp(-Ap/2\e)$, the minimum possible to have these $N$ transitions. 
Moreover, from \eqref{eq:energy-dec} it follows that if the initial datum $u_0^\e$ satisfies \eqref{eq:energy-ini}, 
then the solution $u^\e(\cdot,t)$ satisfies the same inequality for all times $t>0$.
\end{rem}

The main result of this section states that the solution $u^\e(\cdot,t)$ arising from an initial datum satisfying \eqref{eq:ass-u0} and \eqref{eq:energy-ini}, 
satisfies the property \eqref{eq:ass-u0}  as well, for an exponentially long time.

\begin{thm}[metastable dynamics with $p$-Laplacian diffusion in the critical case ${n}=p$]\label{thm:main}
Let $v$ be as in \eqref{vstruct} and $A\in(0,r\sqrt{2}\lambda_p)$, with $r$ satisfying \eqref{eq:r} and $\lambda_p$ defined in \eqref{eq:lambda}.
If $u^\varepsilon$ is the solution to \eqref{eq:2n=p-model}
with homogeneous Neumann boundary conditions \eqref{eq:Neu} and initial datum $u_0^{\varepsilon}$
satisfying \eqref{eq:ass-u0} and \eqref{eq:energy-ini}, then, 
\begin{equation}\label{eq:limit}
	\sup_{0\leq t\leq m \, {\exp(Ap/2\varepsilon)}}\|u^\varepsilon(\cdot,t)-v\|_{{}_{L^1}}\xrightarrow[\varepsilon\rightarrow0]{}0,
\end{equation}
for any $m>0$.
\end{thm}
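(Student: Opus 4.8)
The plan is to combine the energy dissipation identity from Lemma~\ref{lem:energy} with the lower bound from Proposition~\ref{prop:lower}, exactly in the spirit of Bronsard--Kohn \cite{BrKo90} and Grant \cite{Grnt95}. The key insight is that the energy $E_\e$ can decrease by at most an exponentially small amount over an exponentially long time window, and this tight control on the energy forces the solution to stay close in $L^1$ to the reference step function $v$. First I would argue by contradiction: suppose the conclusion \eqref{eq:limit} fails. Then there exist $m>0$, a sequence $\e\to0$, and times $t_\e\in[0,m\exp(Ap/2\e)]$ such that $\|u^\e(\cdot,t_\e)-v\|_{{}_{L^1}}\geq\delta_0$ for some fixed $\delta_0>0$, where we may take $\delta_0\leq\delta$ with $\delta$ the threshold from Proposition~\ref{prop:lower}.

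The heart of the argument is to control the $L^1$-distance travelled by the solution in terms of the energy lost. Using the Cauchy--Schwarz inequality together with the dissipation identity \eqref{eq:energy-dec}, for $0\leq t_1<t_2$ one estimates
\begin{equation*}
	\|u^\e(\cdot,t_2)-u^\e(\cdot,t_1)\|_{{}_{L^1}}
	\leq (b-a)^{1/2}\int_{t_1}^{t_2}\|u_t(\cdot,t)\|_{{}_{L^2}}\,dt
	\leq (b-a)^{1/2}(t_2-t_1)^{1/2}\left(\e\bigl(E_\e[u^\e](t_1)-E_\e[u^\e](t_2)\bigr)\right)^{1/2}.
\end{equation*}
Since $E_\e$ is nonincreasing in time by Lemma~\ref{lem:energy} and is bounded below by $Nc_p-C\exp(-Ap/2\e)$ by Proposition~\ref{prop:lower} as long as the solution remains within $\delta$ of $v$ in $L^1$, while the initial energy satisfies $E_\e[u_0^\e]\leq Nc_p+C\exp(-Ap/2\e)$ by \eqref{eq:energy-ini}, the total energy drop available over the whole evolution is at most $2C\exp(-Ap/2\e)$. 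Substituting this into the displayed estimate with $t_1=0$ and $t_2=t_\e\leq m\exp(Ap/2\e)$ gives
\begin{equation*}
	\|u^\e(\cdot,t_\e)-u_0^\e\|_{{}_{L^1}}
	\leq (b-a)^{1/2}\bigl(m\exp(Ap/2\e)\bigr)^{1/2}\bigl(2C\e\exp(-Ap/2\e)\bigr)^{1/2}
	= \bigl(2Cm(b-a)\,\e\bigr)^{1/2},
\end{equation*}
which tends to $0$ as $\e\to0$. Combined with the triangle inequality and $\|u_0^\e-v\|_{{}_{L^1}}\to0$ from \eqref{eq:ass-u0}, this contradicts $\|u^\e(\cdot,t_\e)-v\|_{{}_{L^1}}\geq\delta_0$, completing the proof.

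The main obstacle, and the point requiring the most care, is the bootstrapping needed to guarantee that Proposition~\ref{prop:lower} remains \emph{applicable} throughout the time interval: the lower bound \eqref{eq:lower} is only valid while $\|u^\e(\cdot,t)-v\|_{{}_{L^1}}\leq\delta$, so one cannot simply assert the energy floor for all $t$ a priori. I would resolve this with a continuity/first-exit-time argument: let $\tau_\e$ be the first time the $L^1$-distance reaches $\delta$ (set $\tau_\e=+\infty$ if this never happens). On $[0,\tau_\e)$ the hypotheses of Proposition~\ref{prop:lower} hold, so the energy estimate above is valid and shows the solution moves a distance $O(\e^{1/2})$ in $L^1$; for $\e$ small this is strictly less than $\delta-\|u_0^\e-v\|_{{}_{L^1}}$, so the distance $\delta$ is in fact never attained within the window $[0,m\exp(Ap/2\e)]$, i.e.\ $\tau_\e>m\exp(Ap/2\e)$. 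This self-improving estimate closes the argument and simultaneously yields \eqref{eq:limit} on the full time interval. A secondary technical point is justifying the integral inequality $\|u_t\|_{{}_{L^1_t L^2_x}}$ manipulation and the regularity of $u^\e$ needed for \eqref{eq:energy-dec}; these are handled by the assumed solution class $u\in C([0,T],H^2(a,b))$ of Lemma~\ref{lem:energy} together with a standard density/approximation argument.
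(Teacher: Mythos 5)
Your proposal is correct and follows essentially the same route as the paper: the paper likewise combines the dissipation identity of Lemma \ref{lem:energy} with the lower bound of Proposition \ref{prop:lower}, uses H\"older's inequality to turn the exponentially small available energy drop into an $\mathcal{O}(\sqrt{\e})$ bound on $\|u^\e(\cdot,t)-u_0^\e\|_{{}_{L^1}}$ over the window $[0,m\exp(Ap/2\e)]$, and resolves exactly the circularity you identify (Proposition \ref{prop:lower} requiring $L^1$-closeness to $v$) by a bootstrap argument. The only cosmetic difference is the bootstrap quantity: the paper runs its continuity argument on the path length $\int_0^{\hat T}\|u_t^\e\|_{{}_{L^1}}\,dt$, stopping at the first time it equals $\tfrac12\delta$ (Proposition \ref{prop:L2-norm}), whereas you use the first exit time of $\|u^\e(\cdot,t)-v\|_{{}_{L^1}}$ from $[0,\delta)$; the two devices are interchangeable.
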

As it was already mentioned, thanks to Lemma \ref{lem:energy} and Proposition \ref{prop:lower}, 
we can apply the same strategy of \cite{BrKo90} to prove Theorem \ref{thm:main}.
The first step of the proof is the following bound on the $L^2$--norm of the time derivative of the solution $u_t^\varepsilon$.
\begin{prop}\label{prop:L2-norm}
Let us consider the solution $u^\e$ to \eqref{eq:2n=p-model}
with homogeneous Neumann boundary conditions \eqref{eq:Neu} and initial datum 
$u_0^{\varepsilon}$ 
which satisfies \eqref{eq:ass-u0} and \eqref{eq:energy-ini}.
Then there exist positive constants $\varepsilon_0, C_1, C_2>0$ (independent on $\varepsilon$) such that
\begin{equation}\label{L2-norm}
	\int_0^{C_1\varepsilon^{-1}\exp(Ap/2\varepsilon)}\|u_t^\varepsilon\|^2_{{}_{L^2}}dt\leq C_2\varepsilon\exp(-Ap/2\varepsilon),
\end{equation}
for all $\varepsilon\in(0,\varepsilon_0)$.
\end{prop}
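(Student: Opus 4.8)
The plan is to combine the energy dissipation identity of Lemma \ref{lem:energy} with the lower bound of Proposition \ref{prop:lower}; the only genuine difficulty is a bootstrap argument ensuring that the solution stays in the $L^1$-neighbourhood of $v$ on which that lower bound is valid. First I would integrate \eqref{eq:energy-dec} in time to obtain, for every $t\ge0$,
\[
\int_0^t \|u_s^\e\|^2_{L^2}\,ds = \e\big(E_\e[u_0^\e] - E_\e[u^\e(\cdot,t)]\big).
\]
Bounding the first term by the initial energy estimate \eqref{eq:energy-ini}, the whole proposition reduces to producing the matching lower bound $E_\e[u^\e(\cdot,t)] \ge Nc_p - C\exp(-Ap/2\e)$. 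This is precisely Proposition \ref{prop:lower}, valid as soon as $\|u^\e(\cdot,t)-v\|_{L^1}\le\delta$ with $\delta$ as in that statement. Granting this closeness throughout the relevant time interval yields at once
\[
\int_0^t \|u_s^\e\|^2_{L^2}\,ds \le 2C\e\exp(-Ap/2\e),
\]
which is \eqref{L2-norm} with $C_2=2C$.

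The heart of the matter is therefore to show that the closeness $\|u^\e(\cdot,t)-v\|_{L^1}\le\delta$ survives up to $t=C_1\e^{-1}\exp(Ap/2\e)$ for a suitably small $C_1$. I would set
\[
\hat t_\e := \sup\{\tau>0 : \|u^\e(\cdot,t)-v\|_{L^1}\le\delta \ \text{ for all } t\in[0,\tau]\},
\]
which is strictly positive for $\e\ll1$ because \eqref{eq:ass-u0} makes $\|u_0^\e - v\|_{L^1}$ arbitrarily small and $t\mapsto u^\e(\cdot,t)$ is continuous into $L^1$ (from $u^\e\in C([0,T],H^2)$). On $[0,\hat t_\e]$ the lower bound applies, so the dissipation integral above is $\le 2C\e\exp(-Ap/2\e)$ there. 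Then the transport-in-$L^1$ estimate, from Minkowski's integral inequality and Cauchy--Schwarz in time,
\[
\|u^\e(\cdot,t)-u_0^\e\|_{L^1} \le \sqrt{b-a}\,\sqrt{t}\,\left(\int_0^t\|u_s^\e\|^2_{L^2}\,ds\right)^{1/2},
\]
bounds the displacement on $[0,\min\{\hat t_\e,\,C_1\e^{-1}\exp(Ap/2\e)\}]$ by $\sqrt{2C\,C_1(b-a)}$, a constant that can be forced below $\delta/2$ by fixing $C_1$ small. Combined with $\|u_0^\e-v\|_{L^1}<\delta/2$ for $\e$ small, this keeps $\|u^\e(\cdot,t)-v\|_{L^1}$ strictly below $\delta$ up to time $C_1\e^{-1}\exp(Ap/2\e)$; hence, by continuity, $\hat t_\e$ cannot lie in that interval, and $\hat t_\e\ge C_1\e^{-1}\exp(Ap/2\e)$ as required.

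The main obstacle is exactly this circular dependence: the lower bound needs $L^1$-closeness, while the $L^1$-closeness is deduced from the smallness of the dissipation, which itself relies on the lower bound. The continuity argument above breaks the circle, and the quantitative key is that the exponentially small dissipation $\sim\e\exp(-Ap/2\e)$ is weighed against the exponentially long horizon $\sim\e^{-1}\exp(Ap/2\e)$, so that their product stays $O(1)$ and can be tuned below the threshold $\delta$ through the choice of $C_1$. I would finally pick $\e_0$ small enough that all the smallness requirements — on $\|u_0^\e-v\|_{L^1}$, on the validity range $(0,\e_0)$ of Proposition \ref{prop:lower}, and on $C_1$ — hold simultaneously for every $\e\in(0,\e_0)$, completing the proof.
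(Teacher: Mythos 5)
Your proposal is correct and follows essentially the same strategy as the paper's proof: integrate the dissipation identity of Lemma \ref{lem:energy}, invoke Proposition \ref{prop:lower} for the matching energy lower bound, and close a bootstrap in which the exponentially small dissipation is weighed against the exponentially long horizon via H\"older's inequality in time, tuning $C_1$ so the resulting $O(1)$ displacement stays below the $L^1$-closeness threshold $\delta$. The only cosmetic difference is the stopping time: the paper stops when the accumulated motion $\int_0^{\hat T}\|u_t^\e\|_{{}_{L^1}}\,dt$ first reaches $\tfrac12\delta$ (a monotone quantity, so no separate continuity argument is needed), whereas you stop when $\|u^\e(\cdot,t)-v\|_{{}_{L^1}}$ first exceeds $\delta$; both devices break the same circularity.
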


\begin{proof}
Let $\varepsilon_0>0$ be sufficiently small such that, for all $\varepsilon\in(0,\varepsilon_0)$, \eqref{eq:energy-ini} holds and 
\begin{equation}\label{1/2delta}
	\|u_0^\varepsilon-v\|_{{}_{L^1}}\leq\frac12\delta,
\end{equation}
where $\delta$ is the constant of Proposition \ref{prop:lower}. 
Let $\hat T>0$; we claim that if
\begin{equation}\label{claim1}
	\int_0^{\hat T}\|u_t^\varepsilon\|_{{}_{L^1}}dt\leq\frac12\delta,
\end{equation}
then there exists $C>0$ such that
\begin{equation}\label{claim2}
	E_\varepsilon[u^\varepsilon](\hat T)\geq Nc_p-C\exp(-Ap/2\varepsilon).
\end{equation}
Indeed, by using \eqref{1/2delta}, \eqref{claim1} and the triangle inequality we obtain
\begin{equation*}
	\|u^\varepsilon(\cdot,\hat T)-v\|_{{}_{L^1}}\leq\|u^\varepsilon(\cdot,\hat T)-u_0^\varepsilon\|_{{}_{L^1}}+\|u_0^\varepsilon-v\|_{{}_{L^1}}
	\leq\int_0^{\hat T}\|u_t^\varepsilon\|_{{}_{L^1}}+\frac12\delta\leq\delta,
\end{equation*}
and inequality \eqref{claim2} follows from Proposition \ref{prop:lower}.
Upon integration of \eqref{eq:energy-dec}, we deduce
\begin{equation}\label{dissipative}
	E_\e[u^\e_0]-E_\e[u^\e](\hat T)=\e^{-1}\int_0^{\hat T}\|u_t^\e\|^2_{{}_{L^2}}\,dt.
\end{equation}
Substituting  \eqref{eq:energy-ini} and \eqref{claim2} in \eqref{dissipative} yields
\begin{equation}\label{L2-norm-Teps}
	\int_0^{\hat T}\|u_t^\varepsilon\|^2_{{}_{L^2}}dt\leq C_2\e\exp(-Ap/2\varepsilon).
\end{equation}
It remains to prove that inequality \eqref{claim1} holds for $\hat T\geq C_1\e^{-1}\exp(Ap/2\varepsilon)$.
If 
\begin{equation*}
	\int_0^{+\infty}\|u_t^\varepsilon\|_{{}_{L^1}}dt\leq\frac12\delta,
\end{equation*}
then there is nothing to prove. 
Otherwise, choose $\hat T$ such that
\begin{equation*}
	\int_0^{\hat T}\|u_t^\varepsilon\|_{{}_{L^1}}dt=\frac12\delta.
\end{equation*}
Using H\"older's inequality and \eqref{L2-norm-Teps}, we infer
\begin{equation*}
	\frac12\delta\leq[\hat T(b-a)]^{1/2}\biggl(\int_0^{\hat T}\|u_t^\varepsilon\|^2_{{}_{L^2}}dt\biggr)^{1/2}\leq
	\left[\hat T(b-a)C_2\varepsilon\exp(-Ap/2\varepsilon)\right]^{1/2}.
\end{equation*}
It follows that there exists $C_1>0$ such that
\begin{equation*}
	\hat T\geq C_1\varepsilon^{-1}\exp(Ap/2\varepsilon),
\end{equation*}
and the proof is complete.
\end{proof}
We now have all the tools to prove \eqref{eq:limit}.
\begin{proof}[Proof of Theorem \ref{thm:main}]
The triangle inequality yields
\begin{equation}\label{trianglebar}
	\|u^\varepsilon(\cdot,t)-v\|_{{}_{L^1}}\leq\|u^\varepsilon(\cdot,t)-u_0^\varepsilon\|_{{}_{L^1}}+\|u_0^\varepsilon-v\|_{{}_{L^1}},
\end{equation}
for all $t\in[0,m\exp(Ap/2\varepsilon)]$. 
The last term of inequality \eqref{trianglebar} tends to zero by assumption \eqref{eq:ass-u0}.
Regarding the first term, take $\varepsilon$ so small that $C_1\varepsilon^{-1}\geq m$;
 we can thus apply Proposition \ref{prop:L2-norm}, and by  H\"older's inequality and \eqref{L2-norm}, we infer
\begin{equation*}
	\sup_{0\leq t\leq m\exp(Ap/2\varepsilon)}\|u^\e(\cdot,t)-u^\e_0\|_{{}_{L^1}}\leq\int_0^{m\exp(Ap/2\varepsilon)}\|u_t^\e(\cdot,t)\|_{{}_{L^1}}\,dt\leq C\sqrt\e,
\end{equation*}		
for all $t\in[0,m\exp(Ap/2\varepsilon)]$. Hence \eqref{eq:limit} follows.
\end{proof}

Theorem \ref{thm:main} provides sufficient conditions for the existence of a metastable state for equation \eqref{eq:2n=p-model}
and shows its persistence for (at least) an exponentially long time.
Also, we recover exactly the classical result when $p=2$ (cfr. \cite{CaPe89}).

\subsection{Construction of a function with a $N-$transition layer structure }
We conclude this section by constructing a family of functions with a transition layer structure
satisfying \eqref{eq:ass-u0}-\eqref{eq:energy-ini}.
To do this, we will use the standing waves solutions introduced in Section \ref{substanding},
\begin{equation}\label{eq:Fi2}
	\e^p\left(|\Phi'_\e|^{p-2}\Phi'_\e\right)'-F'(\Phi_\e)=0, \quad \qquad \lim_{x\to\pm\infty}\Phi_\e(x)=\pm1, \qquad \Phi_\e(0)=0,
\end{equation}
whose existence has been proved in Proposition \ref{prop:SW}.
More precisely, we prove the following result.
\begin{prop}\label{prop:ex-met}
Fix a piecewise function $v$ as in \eqref{vstruct} and assume that $F(u)=\frac1{2p}{|1-u^2|}^p$ with $p{>1}$.
Then there exists a function $u^\e$ satisfying \eqref{eq:ass-u0} and 
\begin{equation}\label{eq:Nc_eps}
	E_\e[u^\e]<Nc_p,
\end{equation}
where $E_\e$ and $c_p$ are defined in \eqref{eq:energy} and \eqref{eq:c_eps}, respectively.
\end{prop}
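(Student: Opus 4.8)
The plan is to construct $u^\e$ explicitly by gluing together shifted and reflected copies of the monotone standing wave $\Phi_\e$ from Proposition~\ref{prop:SW}, one for each prescribed transition point $h_i$ of the step function $v$ in \eqref{vstruct}. Since we are in the critical regime $n=p$, recall from \eqref{eq:tanh} that $\Phi_\e(x)=\tanh(C_p x/\e)$ is odd, strictly increasing, and --- crucially --- takes values strictly inside $(-1,1)$ for every finite $x$. I will exploit exactly this last feature to obtain the \emph{strict} inequality \eqref{eq:Nc_eps}.

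First I would partition $[a,b]$ by the midpoints $m_0:=a$, $m_N:=b$ and $m_i:=(h_i+h_{i+1})/2$ for $1\le i\le N-1$, so that each subinterval $I_i:=[m_{i-1},m_i]$ contains exactly one transition point $h_i$ in its interior. On $I_i$ I set $u^\e(x):=\sigma_i\,\Phi_\e(x-h_i)$, where $\sigma_i\in\{-1,+1\}$ is chosen to match the direction of the jump of $v$ at $h_i$; since $v$ alternates between $-1$ and $+1$, consecutive signs satisfy $\sigma_{i+1}=-\sigma_i$. The first thing to check is that $u^\e$ is continuous across each interior node $m_i$: the left and right one-sided values are $\sigma_i\Phi_\e((h_{i+1}-h_i)/2)$ and $\sigma_{i+1}\Phi_\e(-(h_{i+1}-h_i)/2)$, which coincide precisely because $\Phi_\e$ is odd and $\sigma_{i+1}=-\sigma_i$. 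Hence $u^\e$ is continuous and piecewise smooth with bounded derivative, so $u^\e\in H^1(a,b)$ (the derivative merely has an exponentially small jump at each node, which is harmless).

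Next I would verify \eqref{eq:ass-u0}. For fixed $x\ne h_i$ one has $\Phi_\e(x-h_i)\to\operatorname{sign}(x-h_i)$ as $\e\to0$, so $u^\e\to v$ pointwise a.e.\ on $[a,b]$; since $|u^\e|<1$ uniformly, dominated convergence yields $\|u^\e-v\|_{{}_{L^1}}\to0$. The heart of the argument is the energy bound. Because $|\sigma_i\Phi_\e'|^p=|\Phi_\e'|^p$ and $F(\sigma_i\Phi_\e)=F(\Phi_\e)$ (as $F$ is even and $\sigma_i^2=1$), the renormalized energy density of $u^\e$ on $I_i$ equals that of the standing wave centered at the origin. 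Using the first integral $\e^p(\Phi_\e')^p=\tfrac{p}{p-1}F(\Phi_\e)$ established in the proof of Proposition~\ref{prop:SW}, this density integrates over all of $\R$ to exactly $c_p$, as in \eqref{eq:c_eps}. Therefore $E_\e[u^\e]=\sum_{i=1}^N\int_{I_i-h_i}\bigl[\tfrac{\e^{p-1}|\Phi_\e'|^p}{p}+\tfrac{F(\Phi_\e)}{\e}\bigr]\,dx$, and each summand is the integral of a \emph{strictly positive} density over a bounded interval $I_i-h_i\subsetneq\R$, hence strictly less than the full-line value $c_p$. Summing over $i$ gives $E_\e[u^\e]<Nc_p$, which is \eqref{eq:Nc_eps}.

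The only delicate point --- and the one I would emphasize --- is the strictness of \eqref{eq:Nc_eps}: it rests entirely on the fact that in the critical case the standing wave never attains $\pm1$, so the energy density stays positive on the truncated tails that fall outside each $I_i-h_i$. This is exactly what fails in the subcritical case $n<p$ of Proposition~\ref{prop:SW}(i), where $\Phi_\e$ reaches $\pm1$ at finite distance and the analogous construction would only give equality, consistent with the stationarity observed there. Apart from this observation, the construction is completely explicit and the remaining estimates are routine.
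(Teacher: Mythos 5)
Your construction, the continuity check at the midpoints, the use of the first integral $\e^p(\Phi_\e')^p=\tfrac{p}{p-1}F(\Phi_\e)$ to reduce the energy density to $|\Phi_\e'|\bigl(\tfrac{p}{p-1}F(\Phi_\e)\bigr)^{\frac{p-1}{p}}$, and the strict inequality coming from $|\Phi_\e|<1$ on any bounded interval are exactly the paper's argument. The proposal is correct and essentially identical to the paper's proof.
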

\begin{proof}
In order to construct a family of functions with a transition layer structure, we use the solution $\Phi_\e$ to \eqref{eq:Fi2}.
Fix $N\in\mathbb{N}$ and $N$ transition points $a<h_1<h_2<\dots<h_n<b$, and denote the middle points by
\begin{equation*}
	m_1:=a, \qquad \quad m_j:=\frac{h_{j-1}+h_j}{2}, \quad j=2,\dots,N-1, \qquad \quad m_N:=b.
\end{equation*}
Define
\begin{equation}\label{eq:translayer}
	u^\e(x):=(-1)^j\Phi_\e\left(x-h_j\right), \qquad \qquad x\in[m_j,m_{j+1}], \qquad \qquad j=1,\dots N,
\end{equation}
where $\Phi_\e$ and $-\Phi_\e$ are the solutions to \eqref{eq:Fi2} and \eqref{eq:Fi-}, respectively.
Notice that $u^\e(h_j)=0$, for $j=1,\dots,N$ and for definiteness we choose $u^\e(a)<0$ (the case $u^\e(a)>0$ is analogous).
Let us prove that $u^\e$ has an $N$-transition layer structure, i.e., that it satisfies \eqref{eq:ass-u0}-\eqref{eq:energy-ini}.
It is easy to check that $u^\e\in H^1(a,b)$ and satisfies \eqref{eq:ass-u0}; let us show that \eqref{eq:Nc_eps} holds true.

From the definitions of $E_\e$ and $u^\e$ we obtain
\begin{equation*}
	E_\e[u^\e]=\sum_{j=1}^{N}\int_{m_j}^{m_{j+1}}\left[\frac{\e^{p-1}|\Phi'_\e|^p}{p}+\frac{F(\Phi_\e)}{\e}\right]\,dx.
\end{equation*}
From \eqref{eq:FI-first}, it follows that $\e^p|\Phi'_\e|^p=\frac{p}{p-1}F(\Phi_\e)$ and so
\begin{equation*}
	\int_{m_j}^{m_{j+1}}\left[\frac{\e^{p-1}|\Phi'_\e|^p}{p}+\frac{F(\Phi_\e)}{\e}\right] \,dx=
	\int_{m_j}^{m_{j+1}}|\Phi'_\e|\left(\frac{p}{p-1}F(\Phi_\e)\right)^{\frac{p-1}{p}} \, dx <c_p,
\end{equation*}
where $c_p$ is defined in \eqref{eq:c_eps}.
Summing up all the terms we end up with \eqref{eq:Nc_eps} and the proof is complete.
\end{proof}

\begin{rem}\label{rem:translayer}
The previous result can be easily extended to a generic potential of the form \eqref{defF}, that is for generic $n$ and $p$.
Indeed, in the proof of Proposition \ref{prop:ex-met} we constructed the function $u^\e$ by using the standing wave  $\Phi_\e$ solution to \eqref{eq:Fi2}
and we only used the fact that $\Phi_\e$ satisfies \eqref{eq:FI-first} and $|\Phi_\e|<1$.
Hence, by making use of Proposition \ref{prop:SW}, one can easily extend the results of Proposition \ref{prop:ex-met} to the case ${n}\neq p$;
in particular, the property \eqref{eq:ass-u0} is always satisfied, while \eqref{eq:Nc_eps} is only valid in the case $n>p$.
Indeed, when $n<p$ one has $E_\e[u^\e]=Nc_p$ for $\e$ sufficiently small
since the profile $\Phi_\e$ touches $\pm1$ (see Proposition \ref{prop:ex-met}).
\end{rem}

\section{The supercritical or degenerate case ${n}>p$: algebraic slow motion}\label{sec:degcase}
In this section we show that,  in the degenerate case with $n>p$, solutions to \eqref{eq:P-model}-\eqref{eq:Neu}-\eqref{eq:initial}
with a transition layer structure will maintain this shape for a time of $\mathcal O(\e^{-k})$, for some $k>0$. 
Hence, here the order of the convergence of unstable solutions towards the minima of the potential $F$ is only algebraic in $\e$, and no metastability is observed, in contrast to the critical case $n = p$ studied in Section \ref{sec:2n=p}. 
The same behavior has been previously observed for reaction diffusion problems ($p=2$) in \cite{BetSme2013},
and it is a direct consequence of the fact that, since $n>p$ and
\begin{equation}\label{defF2}
	F(u)={\frac{1}{2n}|1-u^2|^{n}, \quad n>1,}
\end{equation}
then $F^{([p])}(\pm 1)=0$, that is, we are in the {\it degenerate case}.

The first result we present is the analogous of Proposition \ref{prop:lower}, and provides a lower bound on the energy \eqref{eq:energy}.
\begin{prop}\label{prop:lower_deg}
Let $p{>1}$, $F$ given by \eqref{defF2} with $n>p$,
$v:(a,b)\rightarrow\{-1,+1\}$ a piecewise constant function with exactly $N$ discontinuities (as in \eqref{vstruct}) and define the sequence
\begin{equation}\label{eq:exp_alg}
	\begin{cases}
		k_1=0,\\
		k_2:=\alpha, \\
         	k_{m+1}:=\alpha(k_{m}+1), \qquad m\geq2, 
	\end{cases}
	\quad \mbox{where}  \quad \alpha:=\displaystyle\frac{p-1}{p}+\frac{1}{n}.
\end{equation}
Then, for any $m\in \mathbb N$ there exist constants $\delta_m>0$ and $C_m>0$ such that if $w\in H^1$ satisfies
\begin{equation}\label{|w-v|_L^1<delta_l}
	\|w-v\|_{L^1}\leq\delta_m,
\end{equation}
and
\begin{equation}\label{E_p(w)<Nc0+eps^l}
	E_\e[w]\leq Nc_p+\varepsilon^{k_{m}},
\end{equation}
with $\varepsilon$ sufficiently small, then
\begin{equation}\label{E_p(w)>Nc0-eps^l}
	E_\e[w]\geq Nc_p-C_m\varepsilon^{k_{m+1}},
\end{equation}
where $E_\e$ and $c_p$ are defined in \eqref{eq:energy} and \eqref{eq:c_eps}, respectively.
\end{prop}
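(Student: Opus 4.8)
The plan is to adapt the energy-comparison argument of Proposition \ref{prop:lower} to the degenerate case, but to replace the exponential estimate coming from the Maximum Principle with an algebraic one, and then to iterate. The starting point is identical: fix one transition point $h_i$ of $v$ with, say, $v(h_i\pm r)=\pm1$, and use the smallness assumption \eqref{|w-v|_L^1<delta_l} to produce points $h_i\pm r_\mp\in(h_i,h_i\pm\hat r)$ at which $w$ is within $\rho_2$ of $\pm1$. Splitting the energy over $(h_i-r,h_i+r)$ exactly as in \eqref{eq:Pe} and invoking the pointwise lower bound \eqref{eq:ineq}, the whole problem again reduces to controlling the ``overshoot'' terms $\alpha_p$ and $\beta_p$, i.e.\ to estimating how close the energy-minimizing profile $z$ on $[h_i+r_+,h_i+r]$ comes to $+1$ at the right endpoint. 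The novelty is entirely in this endpoint estimate.

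The hard part, and the genuinely new step, will be the endpoint estimate in the degenerate regime. In the critical case one linearizes near $u=1$ via $\psi=(z-1)^2$ and obtains $\psi''\geq(\mu^2/\e^2)\psi$, whose solution decays exponentially. When $n>p$ the potential $F(u)\sim|1-u|^{n}$ is flat to higher order at $u=\pm1$, so the same computation yields instead a \emph{degenerate} differential inequality of the form $\psi''\geq (c/\e^2)\psi^{\theta}$ for some power $\theta<1$ determined by $n$ and $p$; explicitly I expect $\psi''\gtrsim \e^{-2}F(z)^{(2-p)/p}F'(z)(z-1)$ to translate into a super-linear-in-$\psi$ but sub-linear-in-power inequality whose solutions decay only \emph{algebraically} in $(r-r_+)/\e$ rather than exponentially. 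Integrating this inequality (or comparing with the explicit solution of the corresponding ODE, as was done with $\cosh$ before) should give a bound of the form $|z(h_i+r)-1|\lesssim (\e/(r-\hat r))^{\beta}$ for a suitable exponent $\beta$, and then substituting into \eqref{fi<exp}-type estimate produces an overshoot bounded by a fixed power of $\e$. This yields \eqref{E_p(w)>Nc0-eps^l} at the first level $m=2$ with exponent $k_2=\alpha$. I expect the correct exponent $\alpha=(p-1)/p+1/n$ in \eqref{eq:exp_alg} to emerge precisely from combining the power $(p-1)/p$ appearing in the integrand $(\tfrac{p}{p-1}F)^{(p-1)/p}$ with the power $1/n$ governing the algebraic decay of the degenerate standing wave from Proposition \ref{prop:SW}(iii).

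The final ingredient is the \emph{bootstrap}. The single-step estimate only gives control $\varepsilon^{k_2}$, which is weaker than the exponential bound of the critical case; to reach the sharp algebraic order one feeds the improved lower bound back into the argument. The point is that the hypothesis \eqref{E_p(w)<Nc0+eps^l} at level $k_m$ tightens the constraint on how far $w$ can stray from $v$ over the plateaus $(h_i+r_+,h_i+r)$, which in turn sharpens the endpoint estimate on $z$ and hence the overshoot, producing the better lower bound $\varepsilon^{k_{m+1}}$ with $k_{m+1}=\alpha(k_m+1)$. Concretely, I would rerun the $\psi$-comparison but now using that the local energy excess is already known to be at most $\varepsilon^{k_m}$, which forces $|z(h_i+r_+)-1|$ to be correspondingly small; propagating this through the degenerate decay estimate gives the recursion in \eqref{eq:exp_alg}. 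Each iteration is a finite, $\e$-uniform improvement, so the constants $\delta_m,C_m$ may depend on $m$ but not on $\e$, and the statement follows for every fixed $m\in\mathbb N$.

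I anticipate the main obstacle to be making the degenerate differential inequality comparison rigorous: unlike the linear $\cosh$ comparison, here one must handle a genuinely nonlinear ODE near the degenerate well, ensure that the comparison (maximum-principle-type) argument still applies, and extract the precise algebraic decay rate that produces the exponent $\alpha$. Keeping track of the dependence of all constants on $\rho_1,\rho_2,\hat r$ and verifying that the restriction of $w$ remains an admissible competitor at every level of the bootstrap are the bookkeeping hurdles, but the conceptual crux is the passage from exponential to algebraic decay dictated by $F^{([p])}(\pm1)=0$.
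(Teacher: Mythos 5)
Your proposal takes a genuinely different route from the paper, and as it stands it has concrete gaps. The paper's proof of Proposition \ref{prop:lower_deg} never uses minimizers, Euler--Lagrange equations, or comparison principles; its engine is a measure-theoretic (pigeonhole) argument in which the energy \emph{upper} bound \eqref{E_p(w)<Nc0+eps^l} plays the leading role. Concretely: the $L^1$ hypothesis gives a set of measure $\geq\delta_m$ near each transition on which $w>0$, the upper bound on the tail of the energy (at step $m$, the hypothesis \eqref{E_p(w)<Nc0+eps^l} minus the already-established lower bound on $(x_{m-1},y_{m-1})$) bounds $\int F(w)/\e$ there by $C\e^{k_m}$, and hence there exists a point $y_m$ with $F(w(y_m))\lesssim\e^{k_m+1}$, i.e.\ $|w(y_m)-1|\lesssim\e^{(k_m+1)/n}$. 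Then the Young/Modica--Mortola bound \eqref{eq:ineq} between $x_m$ and $y_m$ loses only $\int F(s)^{(p-1)/p}\,ds$ over intervals of length $\e^{(k_m+1)/n}$ adjacent to $\pm1$; since $F^{(p-1)/p}\sim|1\mp s|^{n(p-1)/p}$, this loss is $O\bigl(\e^{(k_m+1)[(p-1)/p+1/n]}\bigr)=O(\e^{k_{m+1}})$. This is the actual source of $\alpha$ and of the recursion $k_{m+1}=\alpha(k_m+1)$ in \eqref{eq:exp_alg}: the $1/n$ comes from inverting $F(w)\lesssim\e^{k_m+1}$, not from the decay rate of the standing wave, and the iteration is an iteration of this pigeonhole, of which your proposal contains no counterpart.

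Two specific problems with your ODE route. First, the differential inequality is misidentified: near $z=1$ one has $(z-1)F'(z)\sim|z-1|^{n}$ and $F(z)^{(2-p)/p}\sim|z-1|^{n(2-p)/p}$, so the computation modeled on the critical case yields $\psi''\gtrsim\e^{-2}\psi^{n/p}$ with exponent $n/p>1$ (superlinear), not $\theta<1$. This is not cosmetic: for $\theta<1$ the comparison function vanishes in finite distance (the subcritical behavior, zero overshoot), which is the opposite of what you need; the algebraic decay you correctly anticipate is the $\theta>1$ regime. Second, your bootstrap does not close: solutions of $\psi''=\lambda\psi^{n/p}$ forget their initial amplitude, the far-field bound at distance $d$ being $O\bigl((\e/d)^{2p/(n-p)}\bigr)$ \emph{independently} of $\psi(h_i+r_+)$, so feeding an improved bound on $|z(h_i+r_+)-1|$ back into the comparison does not sharpen the endpoint estimate and cannot generate $k_{m+1}=\alpha(k_m+1)$. (Indeed, were the nonlinear comparison made rigorous --- which requires care: the derivation via \eqref{uselater} needs $(z')^{2-p}$ bounded \emph{below}, so the inequality direction works only for $p\geq2$, and the Euler--Lagrange equation degenerates where $z'=0$ --- it would give in one step the limiting exponent $\gamma_{n,p}=(np-n+p)/(n-p)$ under the $L^1$ hypothesis alone, with no iteration and no use of \eqref{E_p(w)<Nc0+eps^l}; the fact that your scheme claims to output only $\e^{k_2}$ from one step and then needs the energy upper bound to iterate shows it is not an accurate account of what either argument produces.) To prove the stated proposition you should abandon the minimizer machinery and implement the pigeonhole-plus-Modica--Mortola induction described above.
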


\begin{proof}
First of all, we observe that the assumption $n>p$ implies that $\alpha\in(0,1)$ and, consequently, 
the increasing sequence defined in \eqref{eq:exp_alg} satisfies
\begin{equation*}
	\lim_{m\to+\infty}k_m=\frac{\alpha}{1-\alpha}=\frac{np}{n-p}-1.
\end{equation*}
We prove our statement by induction on $m\geq1$ and we begin our proof by considering the case of only one transition ($N=1$). Let $h_1$ be the only point of discontinuity of $v$ and assume, without loss of generality, that $v=-1$ on $(a,h_1)$. 
Also, we choose $\delta_m$ small enough such that
\begin{equation*}
	(h_1-2m\delta_m,h_1+2m\delta_m)\subset(a,b).
\end{equation*}
Our goal is to show that for any $m\in\mathbb N$ there exist $x_m\in(h_1- 2m\delta_m,h_1)$ and $y_m\in(h_1, h_{1}+2m\delta_m)$ such that
\begin{equation}\label{x_k,y_k}
	w(x_m)\leq-1+C_m\varepsilon^\frac{k_m+1}{n}, \qquad w(y_m)\geq 1-C_m\varepsilon^\frac{k_m+1}{n},
\end{equation}
and
\begin{equation}\label{E_p(w)-xk,yk}
	\int_{x_m}^{y_m}\left[\frac{\e^{p-1}}p|w_x|^p+\frac{F(w)}\varepsilon\right]dx\geq c_p-C_m\varepsilon^{k_{m+1}},
\end{equation}
where $\{k_m\}_{{}_{m\geq1}}$ is defined in \eqref{eq:exp_alg}.
We start with the base case $m=1$, and we show that hypotheses \eqref{|w-v|_L^1<delta_l} and \eqref{E_p(w)<Nc0+eps^l} imply the existence of two points 
$x_1{\in(h_1-2\delta_1,h_1)}$ and $y_1\in(h_1,h_1+2\delta_1)$ such that
\begin{equation}\label{x_1,y_1}
	w(x_1)\leq-1+C_1\varepsilon^\frac1{n}, \qquad w(y_1)\geq1-C_1\varepsilon^\frac1{n}\,.
\end{equation}
From hypothesis \eqref{|w-v|_L^1<delta_l} in the case $m=1$, we have
\begin{equation}\label{int su (gamma,b)}
	\int_{h_1}^b|w-1|\leq\delta_1,
\end{equation}
so that, denoting by $S^-:=\{y:w(y)\leq0\}$ and by $S^+:=\{y:w(y)>0\}$, 
 \eqref{int su (gamma,b)} yields
\begin{equation*}
\begin{aligned}
\textrm{meas}(S^-\cap(h_1,b))\leq\delta_1 \qquad \mbox{and} \qquad
	\textrm{meas}(S^+\cap(h_1,h_1+2\delta_1))\geq\delta_1.
	\end{aligned}
\end{equation*}
Furthermore, from \eqref{E_p(w)<Nc0+eps^l}  with $m=1$, we obtain
\begin{equation*}
	\int_{S^+\cap(h_1,h_1+2\delta_1)}\frac{F(w)}\varepsilon\, dx\leq c_p+1,
\end{equation*}
and therefore there exists $y_1\in S^+\cap(h_1,h_1+2\delta_1)$ such that
\begin{equation*}
	F(w(y_1))\leq C\varepsilon, \qquad \quad C=\frac{c_p+1}{\delta_1}.
\end{equation*}
From the definition \eqref{defF2}, it follows that $w(y_1)\geq 1-C_1\e^{\frac1{n}}$.
The existence of $x_1{\in S^-\cap(h_1-2\delta_1,h_1)}$ such that $w(x_1)\leq-1+C_1\varepsilon^\frac1{n}$ can be proved similarly.

Now, let us prove that \eqref{x_1,y_1} implies \eqref{E_p(w)-xk,yk} in the case $m=1$,
and as a trivial consequence we obtain the statement \eqref{E_p(w)>Nc0-eps^l} with $m=1$ and $N=1$.
Indeed, proceeding as in \eqref{eq:c_eps} one has
\begin{equation}\label{stima-l=1}
	\begin{aligned}
		E_\e[w]&\geq\int_{x_1}^{y_1}\left[\frac{\varepsilon^{p-1}}p|w_x|^p+\frac{F(w)}\varepsilon\right]dx
		\geq\left(\frac{p}{p-1}\right)^{\frac{p-1}{p}}\int_{x_1}^{y_1} |w_x|F(w)^{\frac{p-1}{p}}dx\\
		&\geq c_p-\left(\frac{p}{p-1}\right)^{\frac{p-1}{p}}\int_{1-C_1\varepsilon^\frac1{n}}^1 F(s)^{\frac{p-1}{p}}ds \\
		&\qquad  -\left(\frac{p}{p-1}\right)^{\frac{p-1}{p}}\int_{-1}^{-1+C_1\varepsilon^\frac1{n}}F(s)^{\frac{p-1}{p}}ds \\
		&\geq c_p-C_1\varepsilon^\alpha,
	\end{aligned}
\end{equation}
where $\alpha$ is defined in \eqref{eq:exp_alg}.
This concludes the proof in the case $m=1$ with one transition ($N=1$).

We now enter the core of the induction argument, proving that if \eqref{E_p(w)-xk,yk} holds true for for any $j\in\{1,\dots,m-1\}$, $m\geq2$, then \eqref{x_k,y_k} holds true.
By using \eqref{|w-v|_L^1<delta_l} we have
\begin{equation}\label{meas>delta_l-k}
	\textrm{meas}(S^+\cap(y_{m-1},y_{m-1}+2\delta_m))\geq\delta_m.
\end{equation}
Furthermore, by using  \eqref{E_p(w)<Nc0+eps^l} and \eqref{E_p(w)-xk,yk} in the case $m-1$, we deduce
\begin{equation*}
	\int_{y_{m-1}}^b\frac{F(w)}\varepsilon dx\leq C_m\varepsilon^{k_m},
\end{equation*}
implying
\begin{equation}\label{int F(w)<Ceps^k+1}
	\int_{S^+\cap(y_{m-1},y_{m-1}+2\delta_m)} F(w)\,dx\leq C_m\varepsilon^{k_m+1}.
\end{equation}
Finally, from  \eqref{meas>delta_l-k} and \eqref{int F(w)<Ceps^k+1} there exists $y_{m}\in S^+\cap(y_{m-1},y_{m-1}+2\delta_m)$ such that
\begin{equation*}
	F(w(y_{m}))\leq \frac{C_m}{\delta_m}\varepsilon^{k_m+1},
\end{equation*}
and, as a consequence, we have the existence of $y_{m}\in(y_{m-1},y_{m-1}+2\delta_m)$ as in \eqref{x_k,y_k}. 
The existence of $x_{m}\in(x_{m-1}-2\delta_m,x_{m-1})$ can be proved similarly. 

Reasoning as in \eqref{stima-l=1}, one can easily check that \eqref{x_k,y_k} implies
\begin{equation*}
	\int_{x_{m}}^{y_{m}}\left[\frac{\varepsilon^{p-1}}p |w_x|^p+\frac{F(w)}\varepsilon\right]dx\geq c_p-C_m\varepsilon^{k_{m+1}},
\end{equation*}
and  the induction argument is completed, as well as the proof in case $N=1$.

The previous argument can be easily adapted to the case $N>1$. 
Let $v$ be as in \eqref{vstruct}, and set $a=h_0, h_{N+1}=b$. 
We argue as in the case $N=1$ in each point of discontinuity $h_i$, by choosing
the constant $\delta_m$ so that 
$$
	{h_i}+2m\delta_m<h_{i+1}-2m\delta_m,\qquad \quad 0\leq i\leq N,
$$
and by assuming, without loss of generality, that $v=-1$ on $(a,h_1)$. 
Proceeding as in \eqref{x_1,y_1}, one can obtain the existence of $x^i_1\in(h_i-2\delta_m,h_i)$ and $y^i_1\in(h_i,h_i+2\delta_m)$ such that
\begin{align*}
	w(x^i_1)&\approx (-1)^i, &w(y^i_1)&\approx(-1)^{i+1},\\
	F(w(x^i_1))&\leq C\varepsilon, &F(w(y^i_1))&\leq C\varepsilon.
\end{align*}
On each interval $(x_1^i,y_1^i)$ we estimate as in \eqref{stima-l=1}, so that by summing  one obtains
$$
	\sum_{i=1}^N\int_{x_1^i}^{y_1^i}\left[\frac{\varepsilon^{p-1}}p |w_x|^p+\frac{F(w)}\varepsilon\right]dx\geq Nc_p-C_m\varepsilon^\alpha,
$$
that is \eqref{E_p(w)>Nc0-eps^l} with $m=1$. 
Arguing inductively as done in the case $N=1$, we obtain \eqref{E_p(w)>Nc0-eps^l} for the general case $m\geq2$.
\end{proof}

\begin{rem}\label{sharp}
It is worth noticing that the assumption $n>p$ implies that the sequence \eqref{eq:exp_alg} is increasing, bounded from above and, as a consequence, the best exponent we can obtain is
\begin{equation}\label{eq:limitalpha}
	\gamma_{n,p}:=\lim_{m\to+\infty}k_m=\frac{\alpha}{1-\alpha}=\frac{np}{n-p}-1.
\end{equation}
In particular, proceeding as in Proposition \ref{prop:ex-met} (see also Remark \ref{rem:translayer}) 
we can construct a function $w$ satisfying \eqref{|w-v|_L^1<delta_l} and $E_\e[w]<Nc_p$, implying that  
$$0<Nc_p-E_\e[w]\leq C_m\e^{k_m}, \qquad \mbox{for any} \ \ m \in \N.$$ 
Hence, we only have an \emph{algebraic small reminder} of order $\mathcal{O}(\e^{k_m})$. 
As we have already mentioned, the exponent $\gamma_{n,p}$ {obtained in the limit \eqref{eq:limitalpha} is the same of the} one obtained in \cite{BetSme2013}, which, in the case $p=2$, is $\gamma=1+{4/(n-2)}$, and in particular, when $n={2}$ (that is, we consider the usual Allen--Cahn equation \eqref{eq:Al-Ca}), $\gamma_{n,p}$ turns out to be equal to $+\infty$, which is what we expect after the results of \cite{BrKo90}.

On the other hand, the procedure we used in the proof of Proposition \ref{prop:lower_deg} can be also applied to the case $n\leq p$.
To be more precise, in the case $n=p$, we have $\alpha=1$ and the sequence \eqref{eq:exp_alg} is simply $k_m:=m-1$.
Thus, we obtain an algebraic small reminder $\mathcal{O}(\e^k)$ for any $k\in\mathbb N$ and
we generalize to the case $p>1$ the result of \cite{BrKo90}. The same can be obtained if considering the limit $p \to n$ in \eqref{eq:limitalpha}, which gives  $\displaystyle{\lim_{p \to n} \gamma_{n,p} = +\infty}$. However, let us underline again that in this case the sharp estimate is given by \eqref{eq:lower}, providing an {\it exponentially small reminder}. 

Finally, if $n<p$, one has $\alpha>1$ and the sequence \eqref{eq:exp_alg} diverges to $+\infty$ as $m\to+\infty$ (notice that here $k_m$ is not necessary a natural number),
meaning that the exponent of $\e$ in \eqref{E_p(w)-xk,yk} can be chosen arbitrarily large.
\end{rem}

Proposition \ref{prop:lower_deg} is the key point to prove the algebraic slow motion of the solutions in the case $n<p$.
As done in Section \ref{sec:2n=p}, we fix a piecewise constant function $v$ with $N$ transitions as in \eqref{vstruct} and we assume that the initial datum $u^\e_0$ satisfies
\begin{equation}\label{eq:ass-u0-deg}
	\lim_{\varepsilon\rightarrow 0} \|u^\varepsilon_0-v\|_{{}_{L^1}}=0,
\end{equation}
and that there exists $m\in\mathbb N$ such that
\begin{equation}\label{eq:energy-ini-deg}
	E_\varepsilon[u^\varepsilon]\leq Nc_p+\e^{k_m},
\end{equation}
for any $\varepsilon\ll1$, where the energy $E_\e$ and the positive constants $c_ p, k_m$ are defined in \eqref{eq:energy}, \eqref{eq:c_eps} and \eqref{eq:exp_alg}, respectively.

The main result of this section is the following theorem.
\begin{thm}[algebraic slow motion with a $p$-Laplacian diffusion in the degenerate or supercritical case $n>p$]\label{thm:main2}
Let $u^\varepsilon$ be the solution to \eqref{eq:P-model}-\eqref{eq:Neu}-\eqref{eq:initial} with $n>p$ 
and with initial datum $u_0^{\varepsilon}$ satisfying \eqref{eq:ass-u0-deg}-\eqref{eq:energy-ini-deg}. 
Then,
\begin{equation}\label{eq:limit-deg}
	\sup_{0\leq t\leq l{\e^{-k_m}}}\|u^\varepsilon(\cdot,t)-v\|_{{}_{L^1}}\xrightarrow[\varepsilon\rightarrow0]{}0,
\end{equation}
 for any $l>0$.
\end{thm}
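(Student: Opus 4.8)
The plan is to mirror the energy argument used for the critical case (Theorem \ref{thm:main}), replacing the exponentially small remainder by the algebraic one furnished by Proposition \ref{prop:lower_deg}. First I would record that the dissipation identity of Lemma \ref{lem:energy}, namely $\frac{d}{dt}E_\e[u](t)=-\e^{-1}\|u_t(\cdot,t)\|^2_{L^2}$, holds verbatim for the general equation \eqref{eq:P-model}, since its proof uses only integration by parts and the Neumann boundary conditions \eqref{eq:Neu} and never the specific value $n=p$. In particular $E_\e[u^\e(\cdot,t)]$ is non-increasing, so the upper bound \eqref{eq:energy-ini-deg}, $E_\e[u^\e(\cdot,t)]\leq Nc_p+\e^{k_m}$, propagates to all $t>0$. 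This is the crucial point that makes the \emph{second} (energy) hypothesis of Proposition \ref{prop:lower_deg} automatically available along the flow.

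The core step is an analog of Proposition \ref{prop:L2-norm}: for the fixed $m$ appearing in \eqref{eq:energy-ini-deg}, I would show the existence of $\e_0,C_1,C_2>0$ such that
$$
\int_0^{C_1\e^{-(1+k_m)}}\|u^\e_t\|^2_{L^2}\,dt\leq C_2\,\e^{1+k_m},
\qquad 0<\e<\e_0.
$$
The proof runs the same bootstrap as in Proposition \ref{prop:L2-norm}. Choosing $\e_0$ so small that $\|u_0^\e-v\|_{L^1}\leq\tfrac12\delta_m$, I claim that whenever $\int_0^{\hat T}\|u_t^\e\|_{L^1}\,dt\leq\tfrac12\delta_m$ on $[0,\hat T]$, the triangle inequality gives $\|u^\e(\cdot,\hat T)-v\|_{L^1}\leq\delta_m$; together with the propagated upper bound this verifies \emph{both} hypotheses of Proposition \ref{prop:lower_deg}, so $E_\e[u^\e](\hat T)\geq Nc_p-C_m\e^{k_{m+1}}$. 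Integrating the dissipation identity then yields
$$
\e^{-1}\int_0^{\hat T}\|u_t^\e\|^2_{L^2}\,dt=E_\e[u_0^\e]-E_\e[u^\e](\hat T)\leq \e^{k_m}+C_m\e^{k_{m+1}}\leq C\e^{k_m},
$$
the last inequality using that the sequence \eqref{eq:exp_alg} is increasing, hence $\e^{k_{m+1}}\leq\e^{k_m}$ for small $\e$. Finally I would fix $\hat T$ by $\int_0^{\hat T}\|u_t^\e\|_{L^1}\,dt=\tfrac12\delta_m$ (if the total variation never reaches $\tfrac12\delta_m$ there is nothing to prove) and apply Hölder's inequality, $\tfrac12\delta_m\leq[\hat T(b-a)]^{1/2}\bigl(\int_0^{\hat T}\|u_t^\e\|^2_{L^2}\bigr)^{1/2}$, to conclude $\hat T\geq C_1\e^{-(1+k_m)}$, which is the asserted window.

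With this estimate in hand, the conclusion \eqref{eq:limit-deg} follows exactly as for Theorem \ref{thm:main}. I would split $\|u^\e(\cdot,t)-v\|_{L^1}\leq\|u^\e(\cdot,t)-u_0^\e\|_{L^1}+\|u_0^\e-v\|_{L^1}$; the second term vanishes as $\e\to0$ by \eqref{eq:ass-u0-deg}. For the first, taking $\e$ small enough that $l\,\e^{-k_m}\leq C_1\e^{-(1+k_m)}$ (i.e. $l\leq C_1\e^{-1}$), Hölder's inequality and the displayed $L^2$ bound give
$$
\sup_{0\leq t\leq l\e^{-k_m}}\|u^\e(\cdot,t)-u_0^\e\|_{L^1}\leq\Big[l\,\e^{-k_m}(b-a)\,C_2\,\e^{1+k_m}\Big]^{1/2}=C\sqrt{\e}\xrightarrow[\e\to0]{}0 .
$$
The genuine analytic difficulty is already absorbed into Proposition \ref{prop:lower_deg}; what remains delicate here is purely the bookkeeping of exponents — checking that the energy gap is of order $\e^{k_m}$ (hence $\int\|u_t^\e\|^2_{L^2}$ of order $\e^{1+k_m}$), and that the time window $\e^{-k_m}$ of the statement sits strictly inside the window $\e^{-(1+k_m)}$ on which the dissipation estimate is valid, the one spare power of $\e$ supplying the factor $\sqrt\e$ exactly as $\e^{-1}$ does in the critical case.
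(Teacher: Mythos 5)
Your proposal is correct and follows essentially the same route as the paper, which proves Theorem \ref{thm:main2} by repeating the argument of Theorem \ref{thm:main} with Proposition \ref{prop:lower_deg} in place of Proposition \ref{prop:lower}, obtaining precisely the intermediate estimate $\int_0^{C_1\varepsilon^{-(k_m+1)}}\|u_t^\varepsilon\|^2_{L^2}\,dt\leq C_2\varepsilon^{k_m+1}$ that you derive. Your bookkeeping of the exponents (using that $k_{m+1}>k_m$ so the remainder is $\mathcal{O}(\e^{k_m})$, and that $l\e^{-k_m}$ fits inside the window $C_1\e^{-(k_m+1)}$ for small $\e$) is exactly what the paper's abbreviated proof leaves to the reader.
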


\begin{proof}
The proof follows the same steps of the proof of Theorem \ref{thm:main} 
and it is obtained by using Proposition \ref{prop:lower_deg} instead of Proposition \ref{prop:lower}.
In particular, proceeding as in the proof of Proposition \ref{prop:L2-norm}, one can prove that
there exist $\varepsilon_0, C_1, C_2>0$ (independent on $\varepsilon$) such that
\begin{equation*}
	\int_0^{C_1\varepsilon^{-(k_m+1)}}\|u_t^\varepsilon\|^2_{{}_{L^2}}dt\leq C_2\varepsilon^{k_m+1},
\end{equation*}
for all $\varepsilon\in(0,\varepsilon_0)$.
Thanks to the latter estimate, we can prove \eqref{eq:limit-deg} in the same way we proved \eqref{eq:limit} (see \eqref{trianglebar} and the following discussion).
\end{proof}

\section{Layer dynamics in the case $n\geq p$}\label{LD}
Theorems \ref{thm:main} and \ref{thm:main2} show that  solutions to \eqref{eq:P-model}-\eqref{eq:Neu}  arising from initial data with $N$-transition layers maintain such unstable structure for long times (precisely, {\it exponentially} long times or {\it algebraically} long times if $n=p$ or $n>p$, respectively). 
These results are tantamount to a precise description of the motion of the transition points $h_1, \dots , h_N$, showing that they move with a very small velocity as $\e\to0^+$.

Following the strategy of \cite{FLM19, FPS1, Grnt95}, let us consider $v$ a piecewise constance function as in \eqref{vstruct}, and $u: [a,b] \to \mathbb{R}$ an arbitrary function. 
We define their {\it interfaces} as follows:
\begin{equation*}
I[v]=\{h_1, \dots , h_N\} \qquad \mbox{and} \qquad I_K[u]=u^{-1}(K),
\end{equation*}
where $K \subset \mathbb{R}\setminus \{ \pm 1\}$ is an arbitrary closed subset. Also, for any $A,B \subset \mathbb{R}$, we define 
\begin{equation*}
d(A,B):=\max \left \{ \sup_{\alpha \in A} d(\alpha,B), \sup_{\beta \in B} d(\beta,A)\right \},
\end{equation*}
where $d(\beta,A):=\inf \{|\beta-\alpha| \, : \,  \alpha \in A \}$.

The next  lemma  shows that  the distance between the interfaces $I_K[u]$
and $I[v]$ is small, providing some  smallness assumptions on the $L^1$--norm of the difference $ u-v$ and on the energy $E_\e[u].$
The result is purely variational in character and holds true both in the critical ($n=p$) and degenerate ($n>p$) cases.

\begin{lem}\label{lemma:layermotion}
Let $F$ as in \eqref{defF}, $v$ as in \eqref{vstruct} and $r$ such that \eqref{eq:r} holds. 
Given $\delta \in (0,r)$, there exist constants $\hat \delta, \e_0, \Gamma >0$ such that, if $u \in H^1([a,b])$ satisfies
\begin{equation}\label{ipolemma}
	\| u- v \|_{L^1} < \hat \delta \qquad \mbox{and} \qquad  E_\e[u] \leq Nc_p + \Gamma,
\end{equation}
then, for any $\e \in (0,\e_0)$ there holds
\begin{equation}\label{tesilemma}
d(I_K[u],I[v])\leq \frac{\delta}{2}.
\end{equation}
\end{lem}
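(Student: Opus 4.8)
The plan is to establish the two inequalities hidden in the symmetric distance \eqref{tesilemma}: that every point of $I_K[u]$ lies within $\delta/2$ of some $h_i$, and conversely that every $h_i$ lies within $\delta/2$ of $I_K[u]$. The only analytic ingredient is the elementary bound \eqref{eq:ineq} together with the $L^1$ hypothesis; this is precisely why the conclusion is insensitive to the sign of $n-p$ and the argument is uniform in $\e$ (so $\e_0$ may be taken to be any value for which \eqref{ipolemma} is meaningful). Since $K$ is closed with $\pm1\notin K$, put $\kappa:=\mathrm{dist}(K,\{-1,+1\})>0$ and define the fixed positive cost
\[
\sigma:=\min\left\{\int_{1-\kappa}^{1-\kappa/2}\Bigl(\tfrac{p}{p-1}F(s)\Bigr)^{\frac{p-1}{p}}ds,\ \int_{1+\kappa/2}^{1+\kappa}\Bigl(\tfrac{p}{p-1}F(s)\Bigr)^{\frac{p-1}{p}}ds\right\}>0,
\]
which, by \eqref{eq:ineq} and the symmetry of $F$, bounds below the energy of any excursion of size $\kappa/2$ away from a pure phase $\pm1$. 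I will fix constants in the order $\eta$, then $\Gamma$ and $\hat\delta$: choose $\eta>0$ with $N\omega(\eta)<\sigma/2$, where $\omega(\eta):=2\,\eta\max_{|s\mp1|\le\eta}\bigl(\tfrac{p}{p-1}F(s)\bigr)^{(p-1)/p}\to0$ measures the defect in the single-transition bound, and then impose $\Gamma<\sigma/2$ together with $\hat\delta<\eta\,\delta/4$.

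On the disjoint windows $(h_i-\delta/4,h_i+\delta/4)$ — disjoint and interior to $(a,b)$ since $\delta<r$ and \eqref{eq:r} gives $h_{i+1}-h_i>2r>2\delta$ — the $L^1$ bound and Chebyshev's inequality let me select $p_i\in(h_i-\delta/4,h_i)$ and $q_i\in(h_i,h_i+\delta/4)$ at which $u$ is within $\eta$ of the left and right phase values of $v$; then \eqref{eq:ineq} gives $\int_{p_i}^{q_i}[\tfrac{\e^{p-1}}{p}|u_x|^p+\tfrac{F(u)}{\e}]\,dx\ge c_p-\omega(\eta)$, and summing over the $N$ disjoint windows,
\[
\sum_{i=1}^N\int_{p_i}^{q_i}\Bigl[\tfrac{\e^{p-1}}{p}|u_x|^p+\tfrac{F(u)}{\e}\Bigr]dx\ \ge\ Nc_p-N\omega(\eta).
\]
For the first inequality I argue by contradiction: if some $x^*\in I_K[u]$ had $d(x^*,I[v])\ge\delta/2$, then $(x^*-\delta/4,x^*+\delta/4)$ contains no $h_i$, $v$ is constant $\equiv\theta$ there, and this interval is disjoint from every window because the centers are at least $\delta/2$ apart and the radii are $\delta/4$. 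Since $u(x^*)\in K$ forces $|u(x^*)-\theta|\ge\kappa$, while the $L^1$ bound yields $x_1\in(x^*-\delta/4,x^*)$ with $|u(x_1)-\theta|\le\kappa/2$, the bound \eqref{eq:ineq} on $[x_1,x^*]$ contributes at least $\sigma$ of energy on an interval disjoint from all $[p_i,q_i]$. Adding this to the previous display gives $E_\e[u]\ge Nc_p-N\omega(\eta)+\sigma$, which with \eqref{ipolemma} forces $\sigma\le\Gamma+N\omega(\eta)<\sigma$, a contradiction; hence $\sup_{\alpha\in I_K[u]}d(\alpha,I[v])\le\delta/2$.

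For the reverse inclusion I use the same points $p_i<q_i$: as $u(p_i)$ is close to one phase and $u(q_i)$ to the other, the intermediate value theorem shows $u$ attains on $[p_i,q_i]$ every value in $(-1+\eta,1-\eta)$. For the interface sets relevant to the layer dynamics, where $K$ meets the phase gap $(-1,1)$ (for instance $0\in K$), one picks $\xi\in K\cap(-1+\eta,1-\eta)$ and obtains $z_i\in[p_i,q_i]\subset(h_i-\delta/4,h_i+\delta/4)$ with $u(z_i)=\xi\in K$, so that $z_i\in I_K[u]$ and $d(h_i,I_K[u])<\delta/4$. Therefore $\sup_{\beta\in I[v]}d(\beta,I_K[u])\le\delta/2$, and combining with the previous paragraph yields \eqref{tesilemma}.

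I expect the main obstacle to be the geometric bookkeeping that makes the excursion energy genuinely \emph{additional}: one must localise the minimal energy $Nc_p$ onto windows of radius $\delta/4$ rather than the a priori radius $r$, so that a value $u(x^*)\in K$ detected at distance $\ge\delta/2$ from $I[v]$ necessarily sits in a region disjoint from all transition windows and hence contributes the extra $\sigma$. This is what dictates the ordering of the small parameters — $\eta$ first, to control the per-transition defect $\omega(\eta)$, and only afterwards $\Gamma<\sigma/2$ and $\hat\delta$. A secondary point, easy to overlook, is that the reverse inclusion is meaningful only when $K$ intersects the open phase interval $(-1,1)$ (so that $I_K[u]$ is nonempty near each $h_i$); this holds for the level-set interfaces used subsequently but would fail for a set such as $K=\{5\}$.
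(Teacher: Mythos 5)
Your argument is correct and is essentially the same contradiction argument that the paper invokes only by reference to \cite{Grnt95} and to Lemma 3.5 of \cite{FPS1}: localize the minimal energy $Nc_p$ on disjoint small windows around the $h_i$ via \eqref{eq:ineq}, and show that a value of $u$ in $K$ at distance $\geq\delta/2$ from all $h_i$ forces a fixed additional energy cost $\sigma$ incompatible with \eqref{ipolemma}, with the reverse inclusion following from the intermediate value theorem on each window. The only points to tidy are that $\hat\delta$ must also be taken small relative to $\kappa$ (e.g.\ impose $\eta\le\kappa/2$ before fixing $\hat\delta<\eta\delta/4$, so the Chebyshev selection of $x_1$ goes through, and handle the trivial case where $x^*$ is within $\delta/4$ of an endpoint by working on the other side), and your observation that the second half of \eqref{tesilemma} requires $K\cap(-1,1)\neq\emptyset$ is accurate --- this is an implicit hypothesis of the lemma as it is used in Theorem \ref{thm:main3}.
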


\begin{proof}

The proof can be done by contradiction as in   \cite{ FPS1,Grnt95}, provided to use the following lower bound on the energy
\begin{equation*}
	E_\e[{u}]\geq\left(\frac{p}{p-1}\right)^{\frac{p-1}{p}}\int_{-1}^{+1} {F(s)}^\frac{p-1}{p}\,ds=c_p.
\end{equation*}
We thus refer the reader's to  see, for instance, the proof of \cite[Lemma 3.5]{FPS1}.
\end{proof}

We are now ready to prove the following result concerning the slow motion of the transition points $h_1, \dots , h_N$, showing that  they evolve {\it exponentially} or {\it algebraically} slowly if $n=p$ or $n>p$, respectively.
 
\begin{thm}\label{thm:main3}
Let $F$ as in \eqref{defF} and let $u^\e$ be the solution to  \eqref{eq:P-model}-\eqref{eq:Neu} with initial datum $u^\e_0$ satisfying $\displaystyle{\lim_{\varepsilon\rightarrow 0} \|u^\varepsilon_0-v\|_{{}_{L^1}}=0}$ and \eqref{eq:energy-ini} in the case $n=p$ or \eqref{eq:energy-ini-deg} in the case $n>p$. 
Given $\delta \in (0,r)$, set
\begin{equation*}
t_\e(\delta) = \inf \{ t  :  d(I_K[u_\e(\cdot, t)], I_K[u_0^\e]) > \delta \},
\end{equation*}
where $K \subset \mathbb{R} \setminus \{ \pm 1\}$. 
Then there exists $\e_0>0$ such that, if $\e \in (0,\e_0)$
\begin{equation*}
	t_\e(\delta) > \omega(\e),
\end{equation*}
where
\begin{equation*}
	\omega(\e):= \left\{ \begin{aligned}
		&\exp (Ap/2\e) \qquad &\mbox{if} \quad n=p, \\
		&\e^{-k_m},  \qquad &\mbox{if} \quad n>p,
	\end{aligned}\right.
\end{equation*}
with $A$ and $k_m$ appearing in Theorems \ref{thm:main} and \ref{thm:main2}.
\end{thm}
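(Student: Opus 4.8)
The plan is to assemble three ingredients already at hand: the slow-motion estimates of Theorems \ref{thm:main} and \ref{thm:main2}, the purely variational interface-localization Lemma \ref{lemma:layermotion}, and the energy monotonicity of Lemma \ref{lem:energy}. The guiding observation is that both the initial datum $u_0^\e$ and the solution $u^\e(\cdot,t)$ at any later time in the relevant window remain close in $L^1$ to the \emph{same} step function $v$; consequently their interfaces $I_K[u_0^\e]$ and $I_K[u^\e(\cdot,t)]$ are each trapped in a neighbourhood of $I[v]=\{h_1,\dots,h_N\}$, and the triangle inequality for the Hausdorff distance $d$ then limits how far the interfaces can have drifted from their initial positions.

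First I would fix $\delta\in(0,r)$ and invoke Lemma \ref{lemma:layermotion} to produce the associated constants $\hat\delta,\Gamma>0$. Next I would use Lemma \ref{lem:energy}, which gives the monotonicity $E_\e[u^\e(\cdot,t)]\leq E_\e[u_0^\e]$ for every $t\geq0$; combined with the initial energy bound \eqref{eq:energy-ini} (when $n=p$) or \eqref{eq:energy-ini-deg} (when $n>p$), this yields $E_\e[u^\e(\cdot,t)]\leq Nc_p+C\exp(-Ap/2\e)$ or $E_\e[u^\e(\cdot,t)]\leq Nc_p+\e^{k_m}$, respectively. In both regimes the excess over $Nc_p$ vanishes as $\e\to0$, so for $\e$ sufficiently small the second hypothesis in \eqref{ipolemma}, namely $E_\e[u^\e(\cdot,t)]\leq Nc_p+\Gamma$, holds uniformly in $t$.

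The central step is the $L^1$ control. Applying Theorem \ref{thm:main} (for $n=p$) or Theorem \ref{thm:main2} (for $n>p$) with $m=2$ or $l=2$, I obtain
\[
\sup_{0\leq t\leq 2\omega(\e)}\|u^\e(\cdot,t)-v\|_{{}_{L^1}}\xrightarrow[\e\to0]{}0,
\]
so for small $\e$ the left-hand side lies below $\hat\delta$. Hence for every $t\in[0,2\omega(\e)]$ both $u=u^\e(\cdot,t)$ and $u=u_0^\e$ verify the two conditions \eqref{ipolemma}, and Lemma \ref{lemma:layermotion} delivers $d(I_K[u^\e(\cdot,t)],I[v])\leq\delta/2$ together with $d(I_K[u_0^\e],I[v])\leq\delta/2$. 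The triangle inequality for $d$ then gives
\[
d(I_K[u^\e(\cdot,t)],I_K[u_0^\e])\leq d(I_K[u^\e(\cdot,t)],I[v])+d(I[v],I_K[u_0^\e])\leq\delta,
\]
for all $t\in[0,2\omega(\e)]$. Therefore the set of times at which the interface displacement exceeds $\delta$ is contained in $(2\omega(\e),\infty)$, whence $t_\e(\delta)\geq 2\omega(\e)>\omega(\e)$, which is the claim.

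I expect the only real difficulty to be the (routine) bookkeeping needed to reconcile the various $\e$-thresholds into a single $\e_0$: the constant $\hat\delta$ is pinned down first by Lemma \ref{lemma:layermotion}, and one must then shrink $\e_0$ so that \emph{simultaneously} the energy excess falls below $\Gamma$, the initial distance $\|u_0^\e-v\|_{{}_{L^1}}$ falls below $\hat\delta$, and the uniform bound supplied by the slow-motion theorem stays below $\hat\delta$ across the whole window $[0,2\omega(\e)]$. Choosing $m=l=2$ rather than $1$ in Theorems \ref{thm:main} and \ref{thm:main2} is precisely the device that promotes the non-strict estimate $t_\e(\delta)\geq\omega(\e)$ to the strict inequality asserted in the statement.
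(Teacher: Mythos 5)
Your proof is correct and follows essentially the same route as the paper's: interface localization via Lemma \ref{lemma:layermotion}, whose hypotheses are kept valid along the flow by the energy monotonicity of Lemma \ref{lem:energy} and the $L^1$ control of Theorems \ref{thm:main} and \ref{thm:main2}, followed by the triangle inequality for the Hausdorff distance $d$. Your device of working on the doubled window $[0,2\omega(\e)]$ (taking $m=l=2$ in those theorems) is a small refinement that cleanly justifies the strict inequality $t_\e(\delta)>\omega(\e)$, a point the paper's proof passes over silently.
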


\begin{proof}
We start with the case $n=p$. 
We choose $\e_0$ small enough such that the assumption on $u_0^\e$ implies that \eqref{ipolemma} is satisfied; 
hence, from Lemma \ref{lemma:layermotion} it follows that
\begin{equation*}
	d(I_K[u_0^\e],I[v]) < \frac{\delta}{2}.
\end{equation*}
Also, if considering the time dependent solution $u^\e(\cdot, t)$, from \eqref{eq:limit} in Theorem \ref{thm:main} and since $E_\e[u]$ is a non-increasing function of $t$, 
it follows that \eqref{ipolemma} is satisfied for $u^\e(\cdot, t)$, for any $t < \exp (Ap/2\e)$, implying \eqref{tesilemma} holds for $u^\e$ as well. 
As a consequence, from the triangular inequality, we have
\begin{equation*}
	d(I_K[u^\e(t)],I_K[u_0^\e]) <\delta,
\end{equation*}
for all $t \in (0, \exp (Ap/2\e))$. 

When $n>p$ we can proceed with the exact same computations by making use of \eqref{eq:limit-deg} in Theorem \ref{thm:main2}; 
we thus end up with 
\begin{equation*}
	d(I_K[u^\e(t)],I_K[u_0^\e]) <\delta,
\end{equation*}
for all $t \in (0,\e^{-k_m})$, and the proof is complete.
\end{proof}

Theorem \ref{thm:main3}, together with Theorems \ref{thm:main} and \ref{thm:main2}, prove that solutions to \eqref{eq:P-model}-\eqref{eq:Neu} with a transition layer structure evolve {\it exponentially slowly} in the case $n=p$ and {\it algebraically slowly} if $n>p$; 
they indeed maintain the same profile of their initial datum for times of  $\mathcal O(\exp(Ap/2\e))$ and $\mathcal O(\e^{-k_m})$ respectively, 
and the transition points move with exponentially (algebraically respectively) small speed.
\vskip0.1cm
We conclude this paper with some numerical simulations showing the rigorous results of Sections \ref{sec:2n=p}, \ref{sec:degcase} and \ref{LD}. In Figure \ref{fig:2n=p} we compare the solutions to \eqref{eq:P-model} in the cases $p=2$ (left) and $p=4$ (right); in both figures the potential  $F$ is as in \eqref{eq:2n=p-model} and $\e=0.1$. 
It is interesting to see how in the case $p=n=2$ (corresponding to the classical Allen--Cahn equation \eqref{eq:Al-Ca}) 
the solution maintains the same transition layer structure of the initial datum up to $t=2*10^4$, 
and that when $t=3*10^4$ the first transition points collapse; 
on the other hand, when $p=4$ (right picture) the solutions is still almost indistinguishable from $u_0$ for $t>10^4$, and it is only for $t=7*10^8$ that the layers disappear. 
This is consistent with the estimate \eqref{eq:limit} proven in Theorem \ref{thm:main}, since $e^{Ap/2\e}>e^{1/\e}\approx2.2*10^4$ whenever $p>2$.

\begin{figure}[ht]
\centering
\includegraphics[width=6cm,height=5cm]{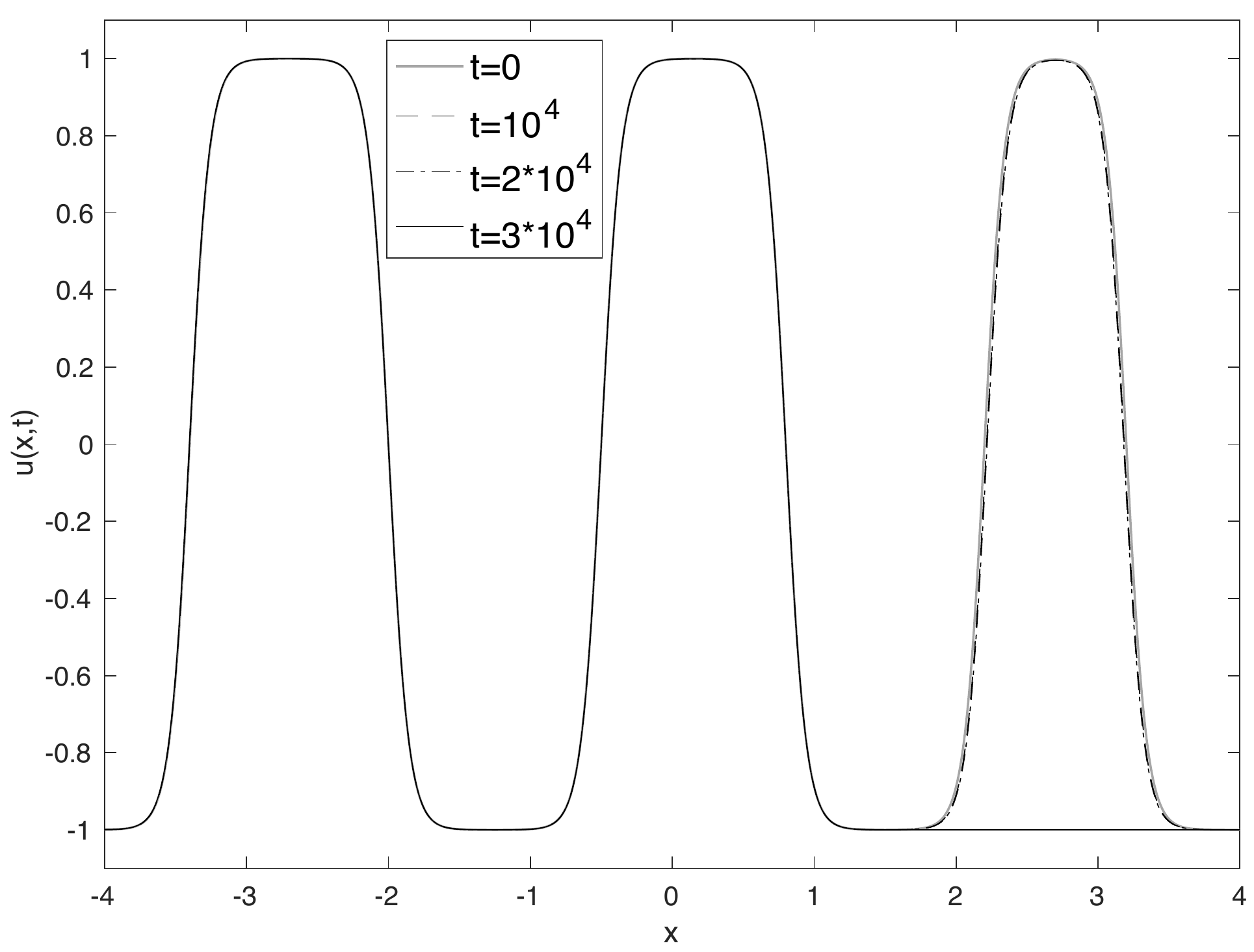}
\quad
\includegraphics[width=6cm,height=5cm]{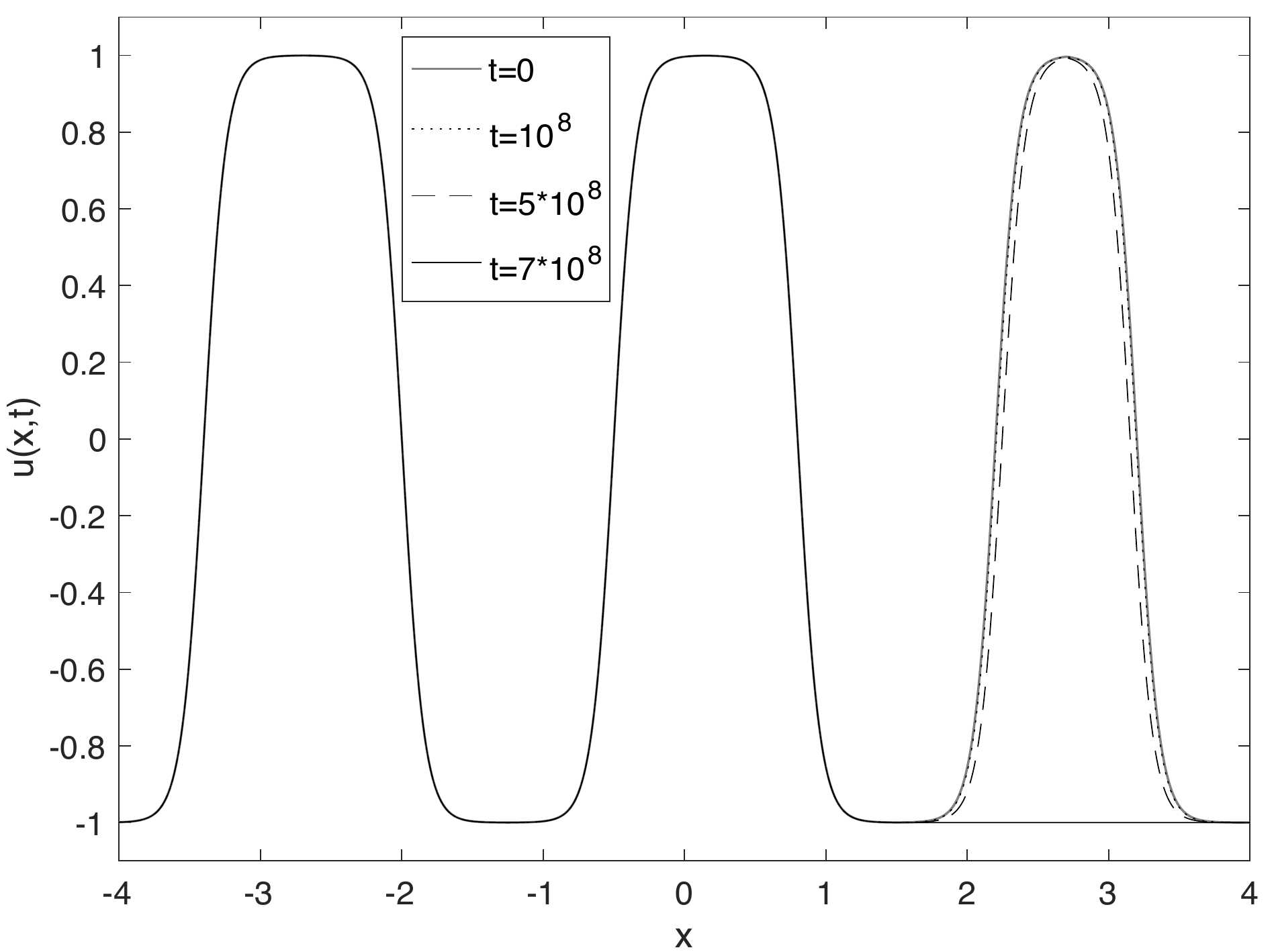}
\hspace{3mm}
\caption{\small{ Solutions to \eqref{eq:P-model} for  $\e=0.1$, $p=n=2$ (left), and $p=n=4$ (right);
the initial datum $u_0$ is as in \eqref{eq:translayer} with $6$ transition points located at $(-3.4,-2,-0.5, 0.8, 2.2, 3.2).$ }}
\label{fig:2n=p}
\end{figure}

Figure \ref{fig:2n>p} shows the solutions to \eqref{eq:P-model} with $n>p$, choosing $p=2$, $n=4$ in the left picture and $p=3$, $n=4$ in the right one;  in such a case we only have an algebraically slow motion of the solutions, and the layers collapse for times which are much smaller if compared to the ones of Figure \ref{fig:2n=p} (compare, for instance, the times in the two pictures on the right hand side, where we chose the same potential $F$ and we only change the value of $p$).
It is also interesting to compare the pictures on the left hand side in Figures \ref{fig:2n=p} and \ref{fig:2n>p}: in both cases we consider the classical Laplacian operator with $\e=0.1$, and we only change the potential $F$, in order to switch from the critical case $n=p$ to the degenerate case $n>p$. 
It is glaring how the time taken by the first interfaces to disappear is only algebraical in the second case, where we see the first transition points to collapse for $t=800$.

\begin{figure}[ht]
\centering
\includegraphics[width=6cm,height=5cm]{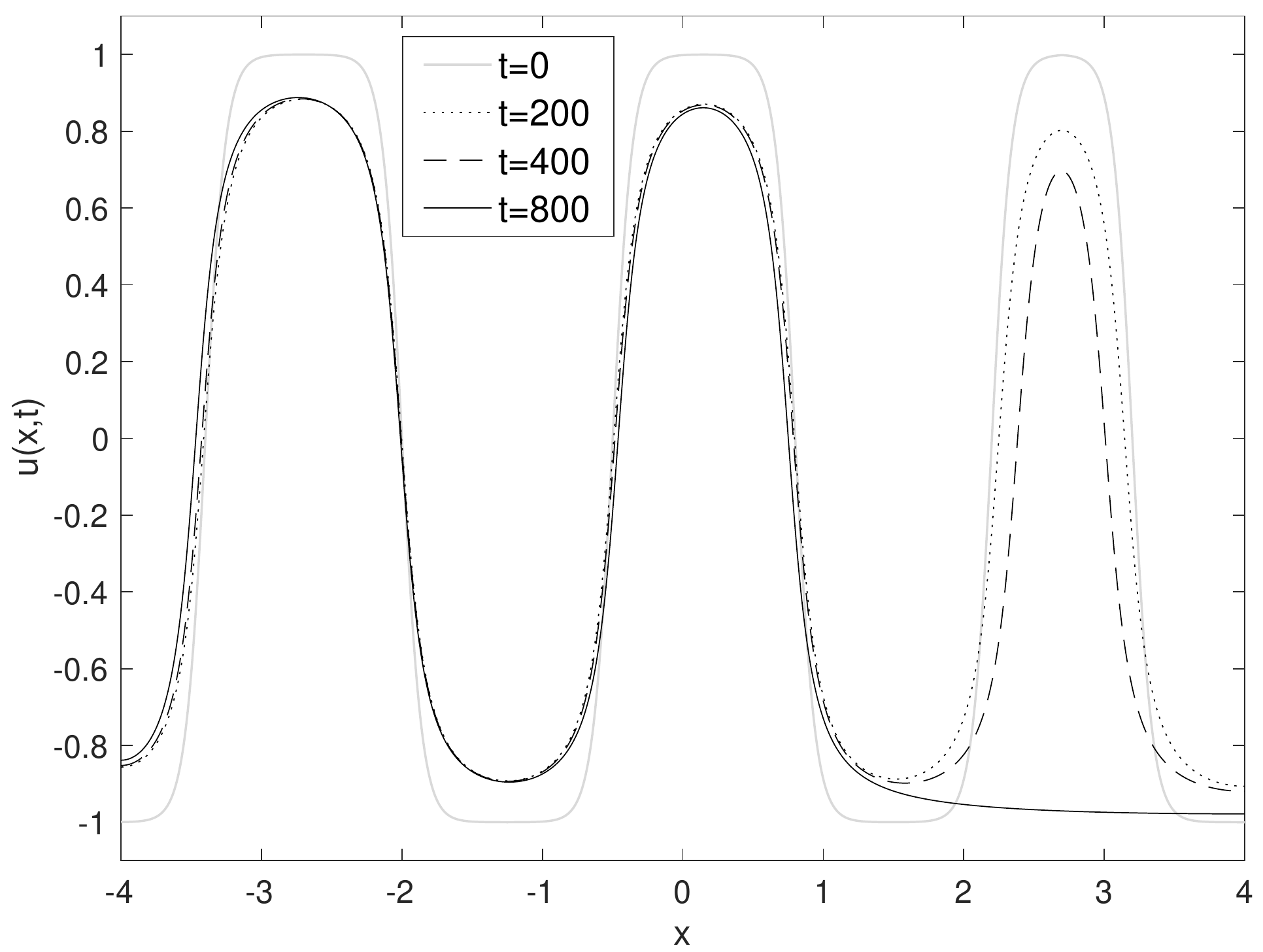}
\quad
\includegraphics[width=6cm,height=5cm]{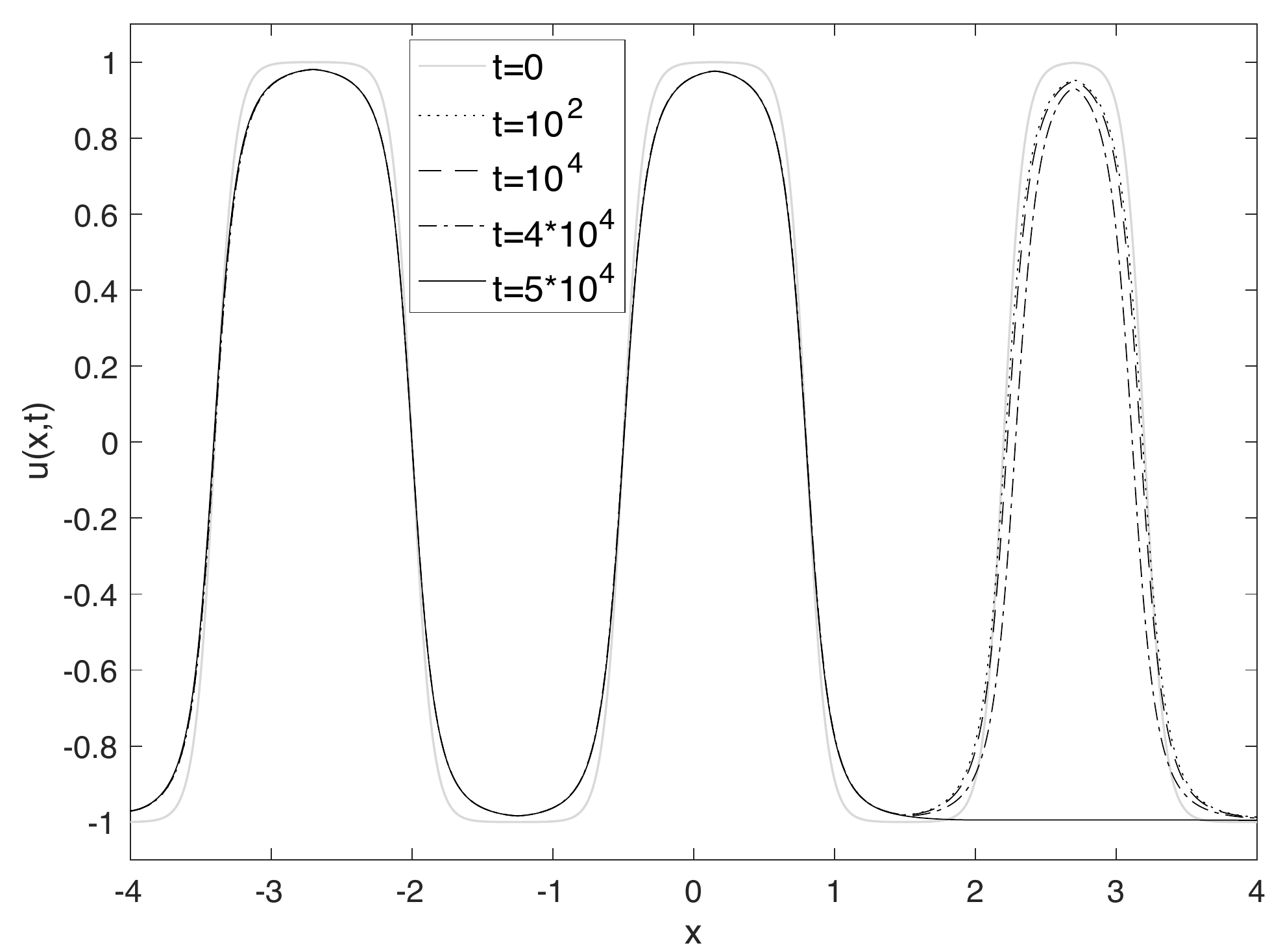}
\hspace{3mm}
\caption{\small{Solutions to \eqref{eq:P-model} for $\e=0.1$,  $p=2$ (left)  and $p=3$ (right); the initial datum $u_0$ is as in Figure \ref{fig:2n=p}, while the potential $F$ is as in \eqref{defF2}  with $n=4$.}}
\label{fig:2n>p}
\end{figure}

As a last example, we consider the case $n>p$ with $p$ a given {\it real} number. 
In Figure \ref{fig:p5.5} we take $n=8$, $p=\pi$ (left picture) and $p=5.5$ (right picture); 
the first interfaces vanish respectively for  $t=2*10^4$ and $t=5*10^9$, 
while to see the collapsing of another transition point we have to wait till $t=2*10^5$ if $p=\pi$ and $t=5*10^{10}$ if $p=5.5$.  
We observe once again that the bigger is $p$, the longer is the time to see the annihilation of the interfaces.

\begin{figure}[ht]
\centering
\includegraphics[width=6cm,height=4.9cm]{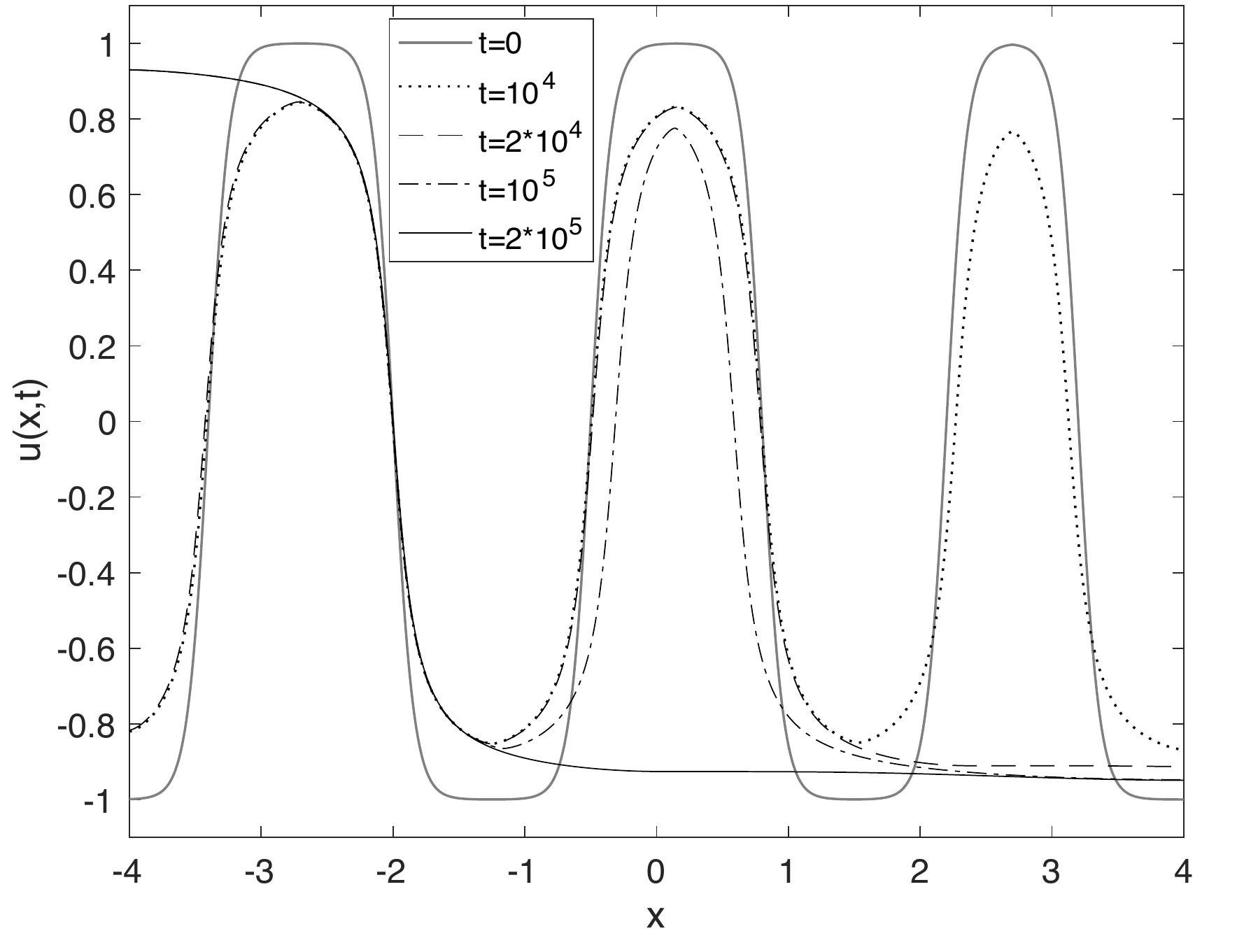}
\quad
\includegraphics[width=6cm,height=5cm]{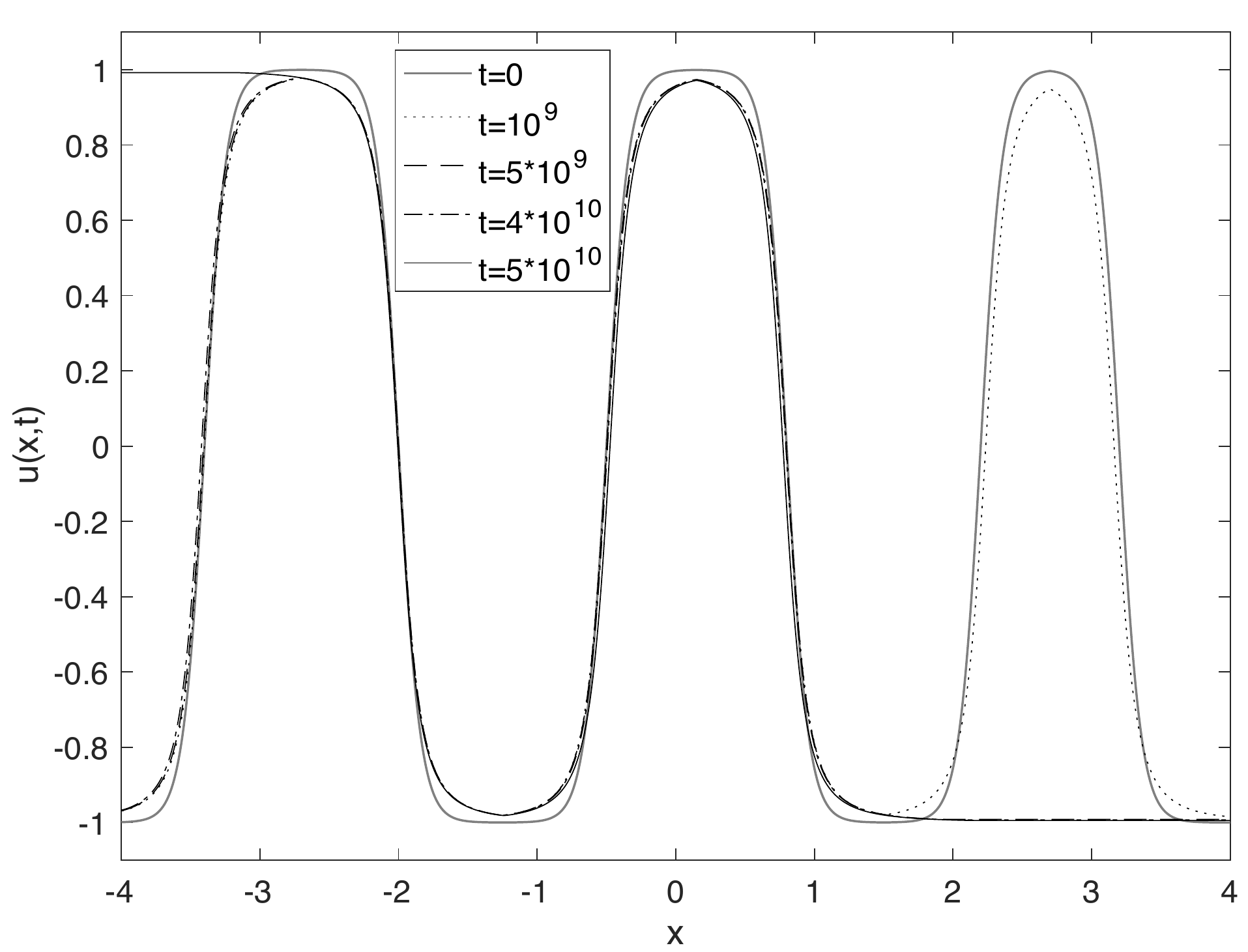}
\hspace{3mm}
\caption{\small{Solutions to \eqref{eq:P-model} for $\e=0.1$, $n=8$ and $p=\pi$ (left) and $p=5.5$ (right); the initial datum $u_0$ is as in Figure \ref{fig:2n=p}.}}
\label{fig:p5.5}
\end{figure}

\section*{Acknowledgements}

The work of RGP was partially supported by DGAPA-UNAM, program PAPIIT, grant IN-100318.

\bibliography{riferimenti2}

\def\cprime{$'$}
\begin{thebibliography}{10}

\bibitem{AlCa79}
{\sc S.~M. Allen and J.~W. Cahn}, {\em A microscopic theory for antiphase
  boundary motion and its application to antiphase domain coarsening}, Acta
  Metallurgica \textbf{27} (1979), no.~6, pp.~1085--1095.

\bibitem{BGKT18}
{\sc J.~Benedikt, P.~Girg, L.~Kotrla, and P.~Tak\'{a}\v{c}}, {\em Origin of the
  {$p$}-{L}aplacian and {A}. {M}issbach}, Electron. J. Differ. Eq.
  \textbf{2018} (2018), pp.~Paper No. 16, 17.

\bibitem{BetSme2013}
{\sc F.~Bethuel and D.~Smets}, {\em Slow motion for equal depth multiple-well
  gradient systems: the degenerate case}, Discrete Contin. Dyn. Syst.
  \textbf{33} (2013), no.~1, pp.~67--87.

\bibitem{BrKo90}
{\sc L.~Bronsard and R.~V. Kohn}, {\em On the slowness of phase boundary motion
  in one space dimension}, Comm. Pure Appl. Math. \textbf{43} (1990), no.~8,
  pp.~983--997.

\bibitem{CaPe89}
{\sc J.~Carr and R.~L. Pego}, {\em Metastable patterns in solutions of
  {$u_t=\epsilon^2u_{xx}-f(u)$}}, Comm. Pure Appl. Math. \textbf{42} (1989),
  no.~5, pp.~523--576.

\bibitem{CaPe90}
\leavevmode\vrule height 2pt depth -1.6pt width 23pt, {\em Invariant manifolds
  for metastable patterns in {$u_t=\epsilon^2u_{xx}-f(u)$}}, Proc. Roy. Soc.
  Edinburgh Sect. A \textbf{116} (1990), no.~1-2, pp.~133--160.

\bibitem{CLY09}
{\sc M.-S. Chang, S.-C. Lee, and C.-C. Yen}, {\em Minimizers and
  gamma-convergence of energy functionals derived from {$p$}-{L}aplacian
  equation}, Taiwanese J. Math. \textbf{13} (2009), no.~6B, pp.~2021--2036.

\bibitem{ChenX04}
{\sc X.~Chen}, {\em Generation, propagation, and annihilation of metastable
  patterns}, J. Differential Equations \textbf{206} (2004), no.~2,
  pp.~399--437.

\bibitem{DiB93}
{\sc E.~DiBenedetto}, {\em Degenerate parabolic equations}, Universitext,
  Springer-Verlag, New York, 1993.

\bibitem{DFV18}
{\sc S.~Dipierro, A.~Farina, and E.~Valdinoci}, {\em Density estimates for
  degenerate double-well potentials}, SIAM J. Math. Anal. \textbf{50} (2018),
  no.~6, pp.~6333--6347.

\bibitem{DPV20}
{\sc S.~Dipierro, A.~Pinamonti, and E.~Valdinoci}, {\em Rigidity results for
  elliptic boundary value problems: stable solutions for quasilinear equations
  with {N}eumann or {R}obin boundary conditions}, Int. Math. Res. Not. IMRN
  \textbf{2020} (2020), no.~5, pp.~1366--1384.

\bibitem{Fol17}
{\sc R.~Folino}, {\em Slow motion for a hyperbolic variation of {A}llen-{C}ahn
  equation in one space dimension}, J. Hyperbolic Differ. Equ. \textbf{14}
  (2017), no.~1, pp.~1--26.

\bibitem{Fol19}
\leavevmode\vrule height 2pt depth -1.6pt width 23pt, {\em Slow motion for
  one-dimensional nonlinear damped hyperbolic {A}llen-{C}ahn systems},
  Electron. J. Differ. Eq. \textbf{2019} (2019), no.~113, pp.~1--21.

\bibitem{FHLP}
{\sc R.~Folino, C.~A. Hern\'{a}ndez~Melo, L.~F. L\'{o}pez~R\'{\i}os, and R.~G.
  Plaza}, {\em Exponentially slow motion of interface layers for the
  one-dimensional {A}llen-{C}ahn equation with nonlinear phase-dependent
  diffusivity}, Z. Angew. Math. Phys. \textbf{71} (2020), no.~4, p.~132.

\bibitem{FLM19}
{\sc R.~Folino, C.~Lattanzio, and C.~Mascia}, {\em Slow dynamics for the
  hyperbolic {C}ahn-{H}illiard equation in one-space dimension}, Math. Methods
  Appl. Sci. \textbf{42} (2019), no.~8, pp.~2492--2512.

\bibitem{FLMS17}
{\sc R.~Folino, C.~Lattanzio, C.~Mascia, and M.~Strani}, {\em Metastability for
  nonlinear convection-diffusion equations}, NoDEA Nonlinear Differential
  Equations Appl. \textbf{24} (2017), no.~4, pp.~Art. 35, 20.

\bibitem{FPS1}
{\sc R.~Folino, R.~G. Plaza, and M.~Strani}, {\em Metastable patterns for a
  reaction-diffusion model with mean curvature-type diffusion}, J. Math. Anal.
  Appl. \textbf{493} (2021), no.~1, p.~124455.

\bibitem{FuHa89}
{\sc G.~Fusco and J.~K. Hale}, {\em Slow-motion manifolds, dormant instability,
  and singular perturbations}, J. Dynam. Differential Equations \textbf{1}
  (1989), no.~1, pp.~75--94.

\bibitem{Grnt95}
{\sc C.~P. Grant}, {\em Slow motion in one-dimensional {C}ahn-{M}orral
  systems}, SIAM J. Math. Anal. \textbf{26} (1995), no.~1, pp.~21--34.

\bibitem{HuSo20}
{\sc E.~J. Hurtado and M.~S\^{o}nego}, {\em On the energy functionals derived
  from a non-homogeneous {$p$}-{L}aplacian equation: {$\Gamma$}-convergence,
  local minimizers and stable transition layers}, J. Math. Anal. Appl.
  \textbf{483} (2020), no.~2, pp.~123634, 13.

\bibitem{LaOM95}
{\sc J.~G.~L. Laforgue and R.~E. O'Malley, Jr.}, {\em Shock layer movement for
  {B}urgers' equation}, SIAM J. Appl. Math. \textbf{55} (1995), no.~2,
  pp.~332--347.
\newblock Perturbation methods in physical mathematics (Troy, NY, 1993).

\bibitem{LPV06}
{\sc K.-a. Lee, A.~Petrosyan, and J.~L. V\'{a}zquez}, {\em Large-time geometric
  properties of solutions of the evolution {$p$}-{L}aplacian equation}, J.
  Differential Equations \textbf{229} (2006), no.~2, pp.~389--411.

\bibitem{Lind19}
{\sc P.~Lindqvist}, {\em Notes on the stationary {$p$}-{L}aplace equation},
  SpringerBriefs in Mathematics, Springer, Cham, 2019.

\bibitem{LionsJL69}
{\sc J.-L. Lions}, {\em Quelques m\'{e}thodes de r\'{e}solution des probl\`emes
  aux limites non lin\'{e}aires}, Dunod; Gauthier-Villars, Paris, 1969.

\bibitem{Lou03}
{\sc B.~Lou}, {\em Singular limit of a {$p$}-{L}aplacian reaction-diffusion
  equation with a spatially inhomogeneous reaction term}, J. Statist. Phys.
  \textbf{110} (2003), no.~1-2, pp.~377--383.

\bibitem{MaSt13}
{\sc C.~Mascia and M.~Strani}, {\em Metastability for nonlinear parabolic
  equations with application to scalar viscous conservation laws}, SIAM J.
  Math. Anal. \textbf{45} (2013), no.~5, pp.~3084--3113.

\bibitem{OtRe07}
{\sc F.~Otto and M.~G. Reznikoff}, {\em Slow motion of gradient flows}, J.
  Differ. Equ. \textbf{237} (2007), no.~2, pp.~372--420.

\bibitem{PeV05}
{\sc A.~Petrosyan and E.~Valdinoci}, {\em Density estimates for a
  degenerate/singular phase-transition model}, SIAM J. Math. Anal. \textbf{36}
  (2005), no.~4, pp.~1057--1079.

\bibitem{ReWa95}
{\sc L.~G. Reyna and M.~J. Ward}, {\em On the exponentially slow motion of a
  viscous shock}, Comm. Pure Appl. Math. \textbf{48} (1995), no.~2,
  pp.~79--120.

\bibitem{Str14}
{\sc M.~Strani}, {\em On the metastable behavior of solutions to a class of
  parabolic systems}, Asymptot. Anal. \textbf{90} (2014), no.~3-4,
  pp.~325--344.

\bibitem{TY00}
{\sc S.~Takeuchi and Y.~Yamada}, {\em Asymptotic properties of a
  reaction-diffusion equation with degenerate {$p$}-{L}aplacian}, Nonlinear
  Anal. \textbf{42} (2000), no.~1, Ser. A: Theory Methods, pp.~41--61.

\end{thebibliography}

\bibliographystyle{newstyle}

\end{document}